%
%
%
%
%
\documentclass{amsart} 
\usepackage{amscd}
\usepackage{amsmath,empheq}
\usepackage{amsfonts}
\usepackage{amssymb}
\usepackage{mathrsfs}
\usepackage[all]{xy}
\usepackage{mathtools}
\textwidth=6in 
\textheight=8.5in
 \topmargin=-0.5cm
\oddsidemargin=0.5cm \evensidemargin=0.5cm
\newtheorem{theorem}{Theorem}[section]

\newtheorem{lemma}[theorem]{Lemma}
\newtheorem{prop}[theorem]{Proposition}
\newtheorem{remark}{Remark}[section]
\newtheorem{corollary}[theorem]{Corollary}
\newtheorem{claim}{Claim}[section]

\newenvironment{proof-sketch}{\noindent{\bf Sketch of Proof}\hspace*{1em}}{\qed\bigskip}

\everymath{\displaystyle}
\newcommand{\RR}{\mathbb R}
\newcommand{\NN}{\mathbb N}

\renewcommand{\leq}{\leqslant}

\renewcommand{\geq}{\geqslant}
\baselineskip=16pt plus 1pt minus 1pt
\begin{document}
\title[Nonlinear nonhomogeneous singular problems]{Nonlinear nonhomogeneous singular problems}
\author[N.S. Papageorgiou]{Nikolaos S. Papageorgiou}
\address[N.S. Papageorgiou]{ Department of Mathematics, National Technical University,
				Zografou Campus, Athens 15780, Greece \& Institute of Mathematics, Physics and Mechanics, 1000 Ljubljana, Slovenia}
\email{\tt npapg@math.ntua.gr}
\author[V.D. R\u{a}dulescu]{Vicen\c{t}iu D. R\u{a}dulescu}
\address[V.D. R\u{a}dulescu]{Faculty of Applied Mathematics, AGH University of Science and Technology, 30-059 Krak\'ow, Poland \& Institute of Mathematics, Physics and Mechanics, 1000 Ljubljana, Slovenia \& Department of Mathematics, University of Craiova, 200585 Craiova, Romania}
\email{\tt vicentiu.radulescu@imfm.si}
\author[D.D. Repov\v{s}]{Du\v{s}an D. Repov\v{s}}
\address[D.D. Repov\v{s}]{Faculty of Education and Faculty of Mathematics and Physics, University of Ljubljana \& Institute of Mathematics, Physics and Mechanics, 1000 Ljubljana, Slovenia}
\email{\tt dusan.repovs@guest.arnes.si}
\keywords{Singular term, superlinear perturbation, positive solution, nonhomogeneous differential operator, nonlinear regularity, minimal positive solutions, strong comparison principle.\\
\phantom{aa} 2010 Mathematics Subject Classification. Primary: 35J20. Secondary: 35J75, 35J92, 35P30.}
\begin{abstract}
We consider a nonlinear Dirichlet problem driven by a nonhomogeneous differential operator
with a growth of order $(p-1)$ near $+\infty$
and with a reaction which has the competing effects of a parametric singular term and a $(p-1)$-superlinear perturbation which does not satisfy the usual  Ambrosetti-Rabinowitz condition. Using variational tools, together with suitable truncation and strong comparison techniques, we prove a ``bifurcation-type" theorem that describes the set of positive solutions as the parameter $\lambda$ moves on the positive semiaxis. We also show that for every $\lambda>0$, the problem has a smallest positive solution $u^*_\lambda$ and we demonstrate the monotonicity and continuity properties of the map  $\lambda\mapsto u^*_\lambda$.
\end{abstract}
\maketitle

\section{Introduction}

Let $\Omega\subseteq\RR^N$ be a bounded domain with a $C^2$-boundary $\partial\Omega$. In this paper, we study the existence and multiplicity of positive solutions for the following nonlinear, nonhomogeneous Dirichlet problem:
\begin{equation}
	-{\rm div}\,a(Du(z)) = \lambda \vartheta(u(z)) + f(z,u(z))\ \mbox{in}\ \Omega,\ u|_{\partial\Omega}=0,\ \lambda>0.
	\tag{$P_{\lambda}$}\label{eqp}
\end{equation}

In this problem, the map $a:\RR^N\rightarrow\RR^N$ involved in the definition of the differential operator is strictly monotone and continuous (hence maximal monotone, too) and satisfies certain additional regularity and growth conditions listed in hypotheses $H(a)$ below.
In particular, we assume that the differential operator has a growth of order $(p-1)$ near $+\infty$.
These hypotheses are general enough to incorporate in our framework many differential operators of interest such as the $p$-Laplacian and the $(p,q)$-Laplacian.

In the reaction of the problem (the right-hand side of $\eqref{eqp}$), we have the competing effects of two nonlinearities. One is the parametric term $\lambda\vartheta(x)$ with $\vartheta(\cdot)$ being singular at $x=0$. In the literature, we usually encounter the special case $\vartheta(x)=x^{-\beta},\ x\geq0$, with $\beta\in(0,1)$. The second term is the perturbation $f(z,x)$, which is a Carath\'eodory function (that is, for all $x\in\RR$ the map $z\mapsto f(z,x)$ is measurable and for almost all $z\in\Omega$ the map $x\mapsto f(z,x)$ is continuous).
We assume that $f(z,\cdot)$ exhibits $(p-1)$-superlinear growth near $+\infty$ but without satisfying the usual (in such cases) Ambrosetti-Rabinowitz condition (the AR-condition for short). Instead we employ a less restrictive condition which incorporates in our framework superlinear nonlinearities with ``slower" growth near $+\infty$, which fails to satisfy the AR-condition. So, in problem \eqref{eqp} we have the combined effects of two different types of nonlinearities: singular and superlinear. Our aim is to study the set of positive solutions of \eqref{eqp} and determine how this set changes as the parameter $\lambda$ moves along the positive semiaxis $(0,+\infty)$. In this direction, we prove a ``bifurcation-type" result which produces a critical parameter $\lambda^*>0$ such that the following properties hold:
\begin{itemize}
	\item for all $\lambda\in(0,\lambda^*)$ problem \eqref{eqp} has at least two positive smooth solutions;
	\item for $\lambda=\lambda^*$ problem \eqref{eqp} has at least one positive smooth solution;
	\item for all $\lambda>\lambda^*$ problem \eqref{eqp} has no positive solutions.
\end{itemize}

In addition, we show that for every $\lambda\in (0,\lambda^*]$ problem \eqref{eqp} has a smallest positive solution $u^*_\lambda$ and we establish the monotonicity and continuity properties of the map $\lambda\mapsto u^*_\lambda$.

Previously, such bifurcation type results have been proved by Sun, Wu \& Long \cite{32}, Haitao \cite{14}, Ghergu \& R\u adulescu \cite{gr1, gr2, 12} (semilinear problems driven by the Laplacian), and by Giacomoni, Schindler \& Taka\v{c} \cite{13}, Papageorgiou, R\u adulescu \& Repov\v{s} \cite{27}, Papageorgiou \& Smyrlis \cite{28}, Papageorgiou, Vetro \& Vetro \cite{29}, Papageorgiou \& Winkert \cite{30} (nonlinear problems driven by the $p$-Laplacian). In all these works the singular term has the special form $u^{-\beta}$, where $\beta\in(0,1)$. To the best of our knowledge, there are no works dealing with nonhomogeneous singular problems.

\section{Mathematical background and hypotheses}

Let $X$ be a Banach space and $X^*$ its topological dual. We denote by $\langle\cdot,\cdot\rangle$ the duality brackets for the pair $(X^*,X)$. Given $\varphi\in C^1(X,\RR)$, we say that $\varphi(\cdot)$ satisfies the ``Cerami condition" (the ``C-condition" for short), if the following property holds:
\begin{equation*}
	\begin{array}{ll}
		\mbox{``Every sequence}\ \{u_n\}_{n\geq1}\subseteq X\ \mbox{such that}\\
		\{\varphi(u_n)\}_{n\geq1}\subseteq\RR\ \mbox{is bounded}\
		\mbox{and}\ (1+||u_n||_X)\varphi'(u_n)\rightarrow0\ \mbox{in}\ X^*\ \mbox{as}\ n\rightarrow\infty,\\
		\mbox{admits a strongly convergent subsequence".}
	\end{array}
\end{equation*}

Evidently, this is a compactness-type condition on the functional $\varphi(\cdot)$. It is necessary since, in general, the ambient space $X$ is infinite-dimensional and therefore it is not locally compact. This condition is the main tool in the proof of a deformation theorem which leads to the minimax theory of critical values of $\varphi$. Prominent in this theory is the so-called ``mountain pass theorem" of Ambrosetti and Rabinowitz \cite{AR}, which we recall below.

\begin{theorem}\label{th1}
	Assume that $X$ is a Banach space, $\varphi\in C^1(X,\RR)$ satisfies the C-condition, $u_0,u_1\in X$, $||u_1-u_0||_X>\rho>0$,
	\begin{equation*}
		\max\{\varphi(u_0),\varphi(u_1)\}<\inf\left\{\varphi(u):||u-u_0||_X=\rho\right\}=m_\rho
	\end{equation*}
	and $c=\inf_{\gamma\in\Gamma}\max_{0\leq t<1}\varphi(\gamma(t))$ with $\Gamma=\{\gamma\in C([0,1],X):\gamma(0)=u_0, \gamma(1)=u_1\}$.
	Then $c\geq m_\rho$ and $c$ is a critical value of $\varphi$ (that is, there exists $u\in X$ such that $\varphi'(u)=0$ and $ \varphi(u)=c$).
\end{theorem}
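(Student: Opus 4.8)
The plan is to prove the statement by contradiction through a deformation argument, the standard route once the Cerami condition is available.

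First I would dispose of the elementary assertions. The number $c$ is finite, because the segment $\gamma_0(t)=(1-t)u_0+tu_1$ lies in $\Gamma$ and $\varphi$, being continuous, is bounded on the compact set $\gamma_0([0,1])$. Next, for an arbitrary $\gamma\in\Gamma$ the function $t\mapsto\|\gamma(t)-u_0\|_X$ is continuous, equals $0$ at $t=0$ and exceeds $\rho$ at $t=1$, so by the intermediate value theorem there is $t_\gamma\in(0,1)$ with $\|\gamma(t_\gamma)-u_0\|_X=\rho$; hence $\max_{0\le t\le1}\varphi(\gamma(t))\ge\varphi(\gamma(t_\gamma))\ge m_\rho$, and passing to the infimum over $\Gamma$ gives $c\ge m_\rho$. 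With the hypothesis this also yields $c\ge m_\rho>\max\{\varphi(u_0),\varphi(u_1)\}$, so I may fix $\varepsilon_0>0$ so small that $\max\{\varphi(u_0),\varphi(u_1)\}<c-\varepsilon_0$.

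The heart of the matter is to show that $c$ is a critical value. Suppose not, i.e. that $K_c:=\{u\in X:\varphi'(u)=0,\ \varphi(u)=c\}=\emptyset$. Since $\varphi\in C^1(X,\RR)$ satisfies the C-condition and $K_c=\emptyset$, I would invoke the Cerami-type quantitative deformation lemma to obtain a number $\varepsilon\in(0,\varepsilon_0)$ and a continuous map $\eta:[0,1]\times X\to X$ with $\eta(0,\cdot)=\mathrm{id}_X$, with $\eta(t,u)=u$ whenever $\varphi(u)\notin[c-\varepsilon,c+\varepsilon]$, and with $\eta\big(1,\{u\in X:\varphi(u)\le c+\varepsilon\}\big)\subseteq\{u\in X:\varphi(u)\le c-\varepsilon\}$. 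The construction is the classical one: on the set of regular points near the strip $\varphi^{-1}([c-\varepsilon,c+\varepsilon])$ one builds a locally Lipschitz vector field $V$ with $\|V(u)\|_X\le1+\|u\|_X$ and $\langle\varphi'(u),V(u)\rangle\ge\tfrac12(1+\|u\|_X)\|\varphi'(u)\|_{X^*}$, truncates $V$ so that it vanishes off a neighbourhood of the strip, and lets $\eta$ be a reparametrisation of the time-one map of the flow of $-V$; the growth bound on $V$ makes this flow global in time, while the C-condition forces $(1+\|u\|_X)\|\varphi'(u)\|_{X^*}\ge\delta>0$ on the relevant strip (otherwise one could extract a Cerami sequence converging, along a subsequence, to a point of $K_c$), so the energy decreases there at a definite rate and has dropped by $2\varepsilon$ by time one.

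Finally I would extract the contradiction. By the definition of $c$ as an infimum, choose $\gamma\in\Gamma$ with $\max_{0\le t\le1}\varphi(\gamma(t))\le c+\varepsilon$ and put $\hat\gamma:=\eta(1,\gamma(\cdot))$, a continuous curve. Because $\varphi(u_0),\varphi(u_1)<c-\varepsilon_0<c-\varepsilon$, the endpoints are fixed by the deformation, $\hat\gamma(0)=\eta(1,u_0)=u_0$ and $\hat\gamma(1)=\eta(1,u_1)=u_1$, so $\hat\gamma\in\Gamma$; and since $\gamma([0,1])\subseteq\{u\in X:\varphi(u)\le c+\varepsilon\}$, the last property of $\eta$ gives $\hat\gamma([0,1])\subseteq\{u\in X:\varphi(u)\le c-\varepsilon\}$, whence $\max_{0\le t\le1}\varphi(\hat\gamma(t))\le c-\varepsilon<c$, contradicting the definition of $c$. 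Therefore $K_c\neq\emptyset$, i.e. $c$ is a critical value of $\varphi$. The only genuinely delicate ingredient here is the deformation lemma of the third paragraph in its Cerami (rather than Palais--Smale) form --- precisely the place where the weight $(1+\|u_n\|_X)$ in the C-condition is indispensable; the remaining steps are soft.
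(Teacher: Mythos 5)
Your argument is correct and is the standard deformation-lemma proof of the mountain pass theorem in its Cerami form; the paper does not prove Theorem~\ref{th1} but simply recalls it, citing Ambrosetti--Rabinowitz \cite{AR}, and your proof matches the classical route behind that citation. In particular you correctly observe the two points where the Cerami condition (rather than Palais--Smale) enters: the weight $1+\|u\|_X$ allows a vector field of linear growth, hence a globally defined flow, and the assumption $K_c=\emptyset$ together with the C-condition yields a positive lower bound for $(1+\|u\|_X)\|\varphi'(u)\|_{X^*}$ on a strip around level $c$, which is exactly the quantitative input the deformation lemma needs.
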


We also refer to the monograph by Ambrosetti \& Prodi \cite{AP} for advances related to the mountain pass theory and to the survey by Pucci \& R\u adulescu \cite{pucrad} on the impact of this in nonlinear analysis.

The two basic spaces which we will use in the analysis of problem \eqref{eqp} are the Sobolev space $W^{1,p}_0(\Omega)$ and the Banach space $C^1_0(\overline\Omega)=\{u\in C^1(\overline\Omega):u|_{\partial\Omega}=0\}$. We denote by $||\cdot||$ the norm of the Sobolev space $W^{1,p}_0(\Omega)$. On account of the Poincar\'e inequality, we have
\begin{equation*}
	||u||=||Du||_p\ \mbox{for all}\ u\in W^{1,p}_0(\Omega).
\end{equation*}

The space $C^1_0(\overline\Omega)$ is an ordered Banach space with positive cone
\begin{equation*}
	C_+=\{u\in C^1_0(\overline\Omega):u(z)\geq0\ \mbox{for all}\ z\in\overline\Omega\}.
\end{equation*}

This cone has nonempty interior given by
\begin{equation*}
	{\rm int}\, C_+=\left\{u\in C_+:u(z)>0\ \mbox{for}\ z\in\Omega,\ \frac{\partial u}{\partial n}|_{\partial\Omega}<0\right\},
\end{equation*}
with $\frac{\partial u}{\partial n}=(Du,n)_{\RR^N}$, where $n(\cdot)$ is the outward unit normal on $\partial\Omega$.

We will also use the following open cone in $C^1(\overline\Omega)$:
$$
	{\rm int}\, \hat{C}_+=\left\{u\in C^1(\overline\Omega):u(z)>0\ \mbox{for all}\ z\in\Omega,\ \frac{\partial u}{\partial n}|_{\partial\Omega\cap u^{-1}(0)}<0\right\}.
$$

Let $l\in C^1(0,+\infty)$ with $l(t)>0$ for all $t>0$ and assume that
\begin{equation}\label{eq1}
	\begin{array}{ll}
		0<\hat{c}\leq \frac{l'(t)t}{l(t)}\leq c_0\ \mbox{and}\ c_1t^{p-1}\leq l(t)\leq c_2\,(t^{s-1}+t^{p-1})\\
		\mbox{for all}\ t>0,\ \mbox{with}\ c_1,c_2>0,\ 1\leq s<p<\infty.
	\end{array}
\end{equation}

The hypotheses on the map $a(\cdot)$ are the following:

\smallskip
$H(a):$ $a(y)=a_0(|y|)y$ for all $y\in\RR^N$ with $a_0(t)>0$ for all $t>0$ and
\begin{itemize}
	\item [(i)] $a_0\in C^1(0,+\infty),\ t\mapsto a_0(t)t$ is strictly increasing on $(0, +\infty),\ a_0(t)t\rightarrow 0^+$ as $t\rightarrow0^+$ and
		\begin{equation*}
			\lim_{t\rightarrow0^+}\frac{a'_0(t)t}{a_0(t)}>-1;
		\end{equation*}
	\item [(ii)] there exists $c_3>0$ such that
		\begin{equation*}
			|\nabla a(y)| \leq c_3\frac{l(|y|)}{|y|}\ \mbox{for all}\ y\in \RR^N\backslash\{0\};
		\end{equation*}
	\item [(iii)] $(\nabla a(y)\xi,\xi)_{\RR^N}\geq\frac{l(|y|)}{|y|}|\xi|^2$ for all $y\in\RR^N\backslash\{0\}$, $\xi\in\RR^N$;
	\item [(iv)] if $G_0(t)=\int^t_0 a_0(s)sds,$
	then there exists $q\in(1,p]$ such that $\limsup_{t\rightarrow0^+}\frac{q G_0(t)}{t^q}\leq c^*$ and
		\begin{equation*}
			0\leq pG_0(t)-a_0(t)t^2\ \mbox{for all}\ t>0.
		\end{equation*}
\end{itemize}

\begin{remark}\label{rem1}
	Hypotheses $H(a)(i),(ii),(iii)$ are dictated by the nonlinear regularity theory of Lieberman \cite{18} and the nonlinear maximum principle of Pucci and Serrin \cite{31}. Hypothesis $H(a)(iv)$ serves the needs of our problem, but it is very mild and it is satisfied in all cases of interest. Similar conditions have been recently used by Papageorgiou \& R\u{a}dulescu \cite{23bis,tranams,24, 25}. We mention that existence and multiplicity results for elliptic equations driven by nonhomogeneous differential operators, can be also found in the works of Colasuonno, Pucci \& Varga \cite{5}, Filippucci \& Pucci \cite{8}, and Papageorgiou \& R\u adulescu \cite{dcds2015}.
\end{remark}

Evidently, $G_0(\cdot)$ is strictly convex and strictly increasing. We set
\begin{equation*}
	G(y) = G_0(|y|)\ \mbox{for all}\ y\in\RR^N.
\end{equation*}

Then $G(\cdot)$ is convex, $G(0)=0$ and
\begin{equation*}
	\nabla G(y) = G'_0(|y|)\frac{y}{|y|}=a_0(|y|)y\ \mbox{for all}\ y\in\RR^N\backslash\{0\},\ \nabla G(0)=0.
\end{equation*}

Hence $G(\cdot)$ is the primitive of $a(\cdot)$. The convexity of $G(\cdot)$ and the fact that $G(0)=0$ imply that
\begin{equation}\label{eq2}
	G(y)\leq (a(y),y)_{\RR^N}\ \mbox{for all}\ y\in\RR^N.
\end{equation}

The main properties of the map $a(\cdot)$ are listed in the next lemma and follow easily from hypotheses $H(a)(i)(ii)(iii)$ and \eqref{eq1}.

\begin{lemma}\label{lem2}
	If hypotheses $H(a)(i)(ii)(iii)$ hold, then
	\begin{itemize}
		\item [(a)] the map $y\mapsto a(y)$ is continuous and strictly monotone (hence maximal monotone, too);
		\item [(b)] $|a(y)|\leq c_4\,(|y|^{s-1}+|y|^{p-1})$ for all $y\in\RR^N$ and some $c_4>0$;
		\item [(c)] $(a(y),y)_{\RR^N}\geq\frac{c_1}{p-1}|y|^p$ for all $y\in\RR^N$.
	\end{itemize}
\end{lemma}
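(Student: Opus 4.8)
The plan is to extract all three assertions from the pointwise bounds on $\nabla a$ in $H(a)(ii),(iii)$ together with \eqref{eq1}, by integrating along rays from the origin (where $a$ vanishes, by $H(a)(i)$).

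For part (a), continuity of $y\mapsto a(y)=a_0(|y|)y$ on $\RR^N\setminus\{0\}$ is immediate from $a_0\in C^1(0,+\infty)$, while at the origin it follows from $|a(y)|=a_0(|y|)|y|\to 0^+$. For strict monotonicity I would fix $y\neq v$, set $\gamma(t)=v+t(y-v)$ (which meets $0$ for at most one $t\in[0,1]$), and use that $t\mapsto(a(\gamma(t)),y-v)_{\RR^N}$ is absolutely continuous to write
\begin{equation*}
	(a(y)-a(v),y-v)_{\RR^N}=\int_0^1\big(\nabla a(\gamma(t))(y-v),y-v\big)_{\RR^N}\,dt\ge\int_0^1\frac{l(|\gamma(t)|)}{|\gamma(t)|}\,|y-v|^2\,dt>0,
\end{equation*}
by $H(a)(iii)$ and $l>0$ on $(0,+\infty)$. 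A monotone, everywhere defined, continuous operator on $\RR^N$ is maximal monotone, which gives the parenthetical statement. (Alternatively, one may invoke $a=\nabla G$ together with the strict convexity of $G_0$, which forces $G$ to be strictly convex.)

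For parts (b) and (c) I would reduce to a scalar inequality, since $|a(y)|=a_0(|y|)|y|$ and $(a(y),y)_{\RR^N}=\big(a_0(|y|)|y|\big)|y|$. Writing $h(t)=a_0(t)t$, which is increasing with $h(0^+)=0$ by $H(a)(i)$, a direct computation gives $h'(t)=|y|^{-2}(\nabla a(y)y,y)_{\RR^N}$ at $|y|=t$; hence, by $H(a)(ii),(iii)$ and \eqref{eq1},
\begin{equation*}
	c_1t^{p-2}\le\frac{l(t)}{t}\le h'(t)\le c_3\frac{l(t)}{t}\le c_2c_3\big(t^{s-2}+t^{p-2}\big)\qquad\text{for all }t>0.
\end{equation*}
Integrating the left inequality from $0$ to $t$ yields $h(t)\ge\frac{c_1}{p-1}t^{p-1}$, which is (c); integrating the right inequality yields, when $s>1$, $h(t)\le c_2c_3\big(\frac{t^{s-1}}{s-1}+\frac{t^{p-1}}{p-1}\big)\le c_4(t^{s-1}+t^{p-1})$, which is (b).

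The only point needing extra attention is the borderline exponent $s=1$ in part (b), where $\int_0^1 t^{s-2}\,dt$ diverges. There I would argue separately: for $|y|\le 1$ use that $h$ is continuous on $[0,1]$ and vanishes at $0$, so $h(|y|)\le M=M|y|^{s-1}$; for $|y|\ge 1$ integrate $h'$ from $1$ to $|y|$ and absorb the resulting $\ln|y|$ into a multiple of $|y|^{p-1}$ (legitimate since $p>1$). Since $|y|^{s-1}\equiv 1$ when $s=1$, this gives $|a(y)|=h(|y|)\le c_4(|y|^{s-1}+|y|^{p-1})$ in every case. None of these steps is deep, which is why the lemma is stated as following ``easily'' from the hypotheses; the only mild subtleties are the behaviour of the integrands across the origin in (a) and the borderline exponent $s=1$ in (b).
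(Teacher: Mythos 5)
Your proof is correct, and it supplies exactly the routine calculations that the paper elides by declaring that Lemma~\ref{lem2} ``follows easily'' from $H(a)(i)$--$(iii)$ and \eqref{eq1} (the paper gives no explicit argument). The scalar reduction via $h(t)=a_0(t)t$ with $h'(t)=|y|^{-2}(\nabla a(y)y,y)_{\RR^N}$ at $|y|=t$ is the right device: combining $H(a)(ii)$, $H(a)(iii)$ and \eqref{eq1} gives the two-sided bound
$c_1t^{p-2}\leq h'(t)\leq c_2c_3(t^{s-2}+t^{p-2})$, and integrating from $0$ (using $h(0^+)=0$ from $H(a)(i)$) yields (c) and, for $s>1$, also (b); your separate handling of the borderline case $s=1$ is genuinely needed and done correctly. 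One small caveat on part (a): when the segment $[v,y]$ passes through the origin and $s=1$, the bound $|\nabla a(\gamma(t))|\leq c_3c_2(|\gamma(t)|^{s-2}+|\gamma(t)|^{p-2})$ is not integrable near the crossing time, so the absolute continuity of $t\mapsto(a(\gamma(t)),y-v)_{\RR^N}$ is not quite automatic from the derivative bound. It still holds (the map is continuous, monotone by $H(a)(iii)$, and $C^1$ away from a single point, so its derivative is integrable and the FTC applies), but the cleanest way around this is precisely the alternative you mention: $a=\nabla G$ with $G(y)=G_0(|y|)$ strictly convex, since $G_0$ is strictly convex and increasing by $H(a)(i)$. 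Either way the result stands.
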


By this lemma and \eqref{eq2}, we have the following growth estimates for the primitive $G(\cdot)$.

\begin{corollary}\label{cor3}
	If hypotheses $H(a)(i)(ii)(iii)$ hold, then $\frac{c_1}{p(p-1)}|y|^p\leq G(y)\leq c_5\,(1+|y|^p)$ for all $y\in\RR^N$ and some $c_5>0$.
\end{corollary}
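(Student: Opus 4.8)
The statement chains two simple facts: the lower bound $\frac{c_1}{p(p-1)}|y|^p \le G(y)$ and the upper bound $G(y) \le c_5(1+|y|^p)$. The first is essentially a restatement of Lemma \ref{lem2}(c) together with \eqref{eq2}, while the second requires integrating the growth bound on $a_0$ coming from \eqref{eq1} and Lemma \ref{lem2}(b).

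For the lower bound, the cleanest route is to bypass \eqref{eq2} and work directly with $G_0$. By definition $G_0(t) = \int_0^t a_0(s)s\,ds$, and hypothesis $H(a)(iii)$ (or \eqref{eq1} via $c_1 t^{p-1}\le l(t)$) gives $a_0(s)s = l(s)\cdot\frac{a_0(s)s}{l(s)}\ge \frac{c_1}{p-1}s^{p-1}$ for $s>0$; more elementarily, Lemma \ref{lem2}(c) with $y = s\,e$ for a unit vector $e$ reads $a_0(s)s^2 \ge \frac{c_1}{p-1}s^p$, i.e.\ $a_0(s)s \ge \frac{c_1}{p-1}s^{p-1}$. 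Integrating from $0$ to $t$ yields $G_0(t)\ge \frac{c_1}{p(p-1)}t^p$, and since $G(y) = G_0(|y|)$ this is exactly the claimed lower bound.

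For the upper bound, I would integrate the bound on $a_0$ implied by Lemma \ref{lem2}(b): writing $a(s\,e) = a_0(s)s\,e$ gives $a_0(s)s = |a(s\,e)| \le c_4(s^{s-1}+s^{p-1})$ for $s>0$, hence $G_0(t) = \int_0^t a_0(s)s\,ds \le c_4\int_0^t (s^{s-1}+s^{p-1})\,ds = c_4\big(\tfrac{t^{s}}{s}+\tfrac{t^{p}}{p}\big)$ (using $s\ge 1$ so the exponent $s-1\ge 0$ and the integral converges at $0$). Since $1\le s < p$, Young's inequality (or simply splitting into $t\le 1$ and $t\ge 1$) bounds $t^{s}$ by a constant times $1+t^{p}$, so $G_0(t)\le c_5(1+t^p)$ for a suitable $c_5>0$; replacing $t$ by $|y|$ gives the result. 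Alternatively one can invoke $G(y)\le (a(y),y)_{\RR^N}$ from \eqref{eq2} and then estimate $(a(y),y)_{\RR^N} = a_0(|y|)|y|^2 \le c_4(|y|^s + |y|^p)$ via Lemma \ref{lem2}(b), reaching the same place.

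None of this presents a genuine obstacle; the only point requiring a moment's care is the integrability of $a_0(s)s$ near $s=0$, which is guaranteed by $H(a)(i)$ (the condition $a_0(t)t\to 0^+$ and $\limsup_{t\to0^+}\frac{a_0'(t)t}{a_0(t)}>-1$ force $a_0(s)s$ to behave no worse than $s^{q-1}$ with $q>1$ near the origin), and the harmless passage from the one-dimensional estimates on $G_0$ to the $\RR^N$-statement via $G(y)=G_0(|y|)$. I would present it as a two-line computation for each bound.
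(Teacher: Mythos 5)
Your proof is correct and follows essentially the same route as the paper, which also obtains the lower bound by integrating the pointwise estimate $a_0(t)t\ge\frac{c_1}{p-1}t^{p-1}$ coming from Lemma \ref{lem2}(c), and the upper bound from \eqref{eq2} together with Lemma \ref{lem2}(b) via $G(y)\le(a(y),y)_{\RR^N}\le c_4(|y|^s+|y|^p)\le c_5(1+|y|^p)$. One small caution: in your upper-bound computation you reuse the symbol $s$ both as the integration variable and as the fixed exponent from \eqref{eq1}, so the expression $c_4(s^{s-1}+s^{p-1})$ reads ambiguously; switching the dummy variable to, say, $\tau$ removes the collision.
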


The examples that follow show that hypotheses $H(a)$ are not restrictive and they incorporate in our framework many differential operators of interest (see Papageorgiou \& R\u{a}dulescu \cite{24}).

\medskip
{\sc Examples}:
(a) $a(y)=|y|^{p-2}y$ with $1<p<\infty$.
		The differential operator is the $p$-Laplacian defined by
		\begin{equation*}
			\Delta_p u={\rm div}\,(|Du|^{p-2}Du)\ \mbox{for all}\ u\in W^{1,p}_0(\Omega).
		\end{equation*}
	
(b) $a(y)=|y|^{p-2}y + |y|^{q-2}y$ with $1<q<p<\infty$.
		The differential operator is the $(p,q)$-Laplacian defined by
		\begin{equation*}
			\Delta_p u+\Delta_q u\ \mbox{for all}\ u\in W^{1,p}_0(\Omega).
		\end{equation*}
		
		Such operators arise in problems of mathematical physics and of mechanics, see Cherfils \& Ilyasov \cite{3} (reaction-diffusion systems) and Zhikov \cite{35} (homogenization of composites made of two different materials). Recently some existence and multiplicity results have been proved for such equations. In particular, we mention the works of Cingolani \& Degiovanni \cite{4}, Marano \& Mosconi \cite{20}, Marano, Mosconi \& Papageorgiou \cite{21}, Mugnai \& Papageorgiou \cite{22}, Papageorgiou \& R\u{a}dulescu \cite{23}, and  Sun, Zhang \& Su \cite{33}.

(c) $a(y)=(1+|y|^2)^\frac{p-2}{2}y$ with $1<p<\infty$.
		The differential operator is the generalized $p$-mean curvature differential operator defined by
		\begin{equation*}
			{\rm div}\,(1+|Du|^2)^\frac{p-2}{2}Du\ \mbox{for all}\ u\in W^{1,p}_0(\Omega).
		\end{equation*}

(d) $a(y)=|y|^{p-2}y\left(1+\frac{1}{1+|y|^p}\right)$ with $1<p<\infty$.
		The differential operator is
		\begin{equation*}
			\Delta_p u+{\rm div}\,\left(\frac{|Du|^{p-2}Du}{1+|Du|^2}\right)\ \mbox{for all}\ u\in W^{1,p}_0(\Omega).
		\end{equation*}

Such operators arise in problems of plasticity.

Let $A:W^{1,p}_0(\Omega)\rightarrow W^{-1,p'}(\Omega)=W^{1,p}_0(\Omega)^*$ $\left(\frac{1}{p}+\frac{1}{p'}=1\right)$ be the nonlinear map defined by
\begin{equation*}
	\langle A(u),h\rangle = \int_\Omega(a(Du),Dh)_{\RR^N}dz\ \mbox{for all}\ u,h\in W^{1,p}_0(\Omega).
\end{equation*}

The next result concerning the map $A(\cdot)$ can be found in Gasinski \& Papageorgiou \cite[p. 279]{10}, see Problem 2.192.

\begin{lemma}\label{lem4}
	If hypotheses $H(a)$ hold, then the map $A(\cdot)$ is bounded (that is, maps bounded sets to bounded sets), continuous, strictly monotone (hence maximal monotone, too) and of type $(S)_+$, that is,
	\begin{equation*}
		``u_n\xrightarrow{w}u\ \mbox{in}\ W^{1,p}_0(\Omega),\ \limsup_{n\rightarrow\infty}\langle A(u_n), u_n-u\rangle\leq0\Rightarrow u_n\rightarrow u\ \mbox{in}\ W^{1,p}_0(\Omega)."
	\end{equation*}
\end{lemma}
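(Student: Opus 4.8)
The four properties can be established in order, with the $(S)_+$ assertion being the only substantial one. For \textbf{boundedness}, the plan is to start from the growth estimate in Lemma~\ref{lem2}(b): by H\"older's inequality and the continuous embedding $W^{1,p}_0(\Omega)\hookrightarrow W^{1,s}_0(\Omega)$ (valid since $s<p$ and $\Omega$ is bounded), one gets $|\langle A(u),h\rangle|\le c_4\int_\Omega(|Du|^{s-1}+|Du|^{p-1})|Dh|\,dz\le c_6(\|u\|^{s-1}+\|u\|^{p-1})\|h\|$, so bounded sets are mapped to bounded sets. For \textbf{continuity}, if $u_n\to u$ in $W^{1,p}_0(\Omega)$ then $Du_n\to Du$ in $L^p(\Omega,\RR^N)$; passing to a subsequence one has a.e.\ convergence together with an $L^p$-dominating function, and since $a(\cdot)$ is continuous (Lemma~\ref{lem2}(a)), the growth bound of Lemma~\ref{lem2}(b) and the generalized dominated convergence theorem yield $a(Du_n)\to a(Du)$ in $L^{p'}(\Omega,\RR^N)$ (one uses $(p-1)p'=p$ and $(s-1)p'<p$). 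Hence $\|A(u_n)-A(u)\|_*\le\|a(Du_n)-a(Du)\|_{p'}\to0$, and the usual subsequence argument promotes this to convergence of the whole sequence.

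For \textbf{strict monotonicity}, given $u\ne v$ write $\langle A(u)-A(v),u-v\rangle=\int_\Omega(a(Du)-a(Dv),Du-Dv)_{\RR^N}\,dz$. By Lemma~\ref{lem2}(a) the integrand is nonnegative and is strictly positive at every $z$ with $Du(z)\ne Dv(z)$; since $\|u-v\|=\|Du-Dv\|_p>0$, the set $\{Du\ne Dv\}$ has positive measure, so the integral is positive. Thus $A$ is strictly monotone; being in addition everywhere defined and continuous (hence demicontinuous), it is \textbf{maximal monotone} by the standard criterion.

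For the \textbf{$(S)_+$ property}, let $u_n\xrightarrow{w}u$ in $W^{1,p}_0(\Omega)$ with $\limsup_n\langle A(u_n),u_n-u\rangle\le0$. Since $A(u)\in W^{-1,p'}(\Omega)$ is fixed, $\langle A(u),u_n-u\rangle\to0$, and by monotonicity $\langle A(u_n)-A(u),u_n-u\rangle\ge0$; decomposing $\langle A(u_n),u_n-u\rangle$ accordingly, one deduces $\langle A(u_n)-A(u),u_n-u\rangle\to0$, i.e. $\int_\Omega(a(Du_n)-a(Du),Du_n-Du)_{\RR^N}\,dz\to0$ with nonnegative integrand. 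Writing $a(y)-a(y')=\int_0^1\nabla a(y'+t(y-y'))(y-y')\,dt$ and using $H(a)(iii)$ together with $l(t)\ge c_1t^{p-1}$ gives the pointwise estimate $(a(y)-a(y'),y-y')_{\RR^N}\ge c_1|y-y'|^2\int_0^1|y'+t(y-y')|^{p-2}\,dt$, which by the classical elementary inequalities for this integral (see \cite{10}) is bounded below by $c_7|y-y'|^p$ when $p\ge2$ and by $c_7|y-y'|^2(|y|+|y'|)^{p-2}$ when $1<p<2$. In the first case $\|Du_n-Du\|_p^p\to0$ follows at once. In the second case, H\"older's inequality with exponents $2/p$ and $2/(2-p)$ gives $\|Du_n-Du\|_p^p\le\big(\int_\Omega|Du_n-Du|^2(|Du_n|+|Du|)^{p-2}\,dz\big)^{p/2}\big(\int_\Omega(|Du_n|+|Du|)^p\,dz\big)^{(2-p)/2}$, where the second factor is bounded because $\{Du_n\}$ is bounded in $L^p(\Omega,\RR^N)$ while the first tends to $0$; hence $u_n\to u$ in $W^{1,p}_0(\Omega)$. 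The main obstacle is precisely this $(S)_+$ step, and within it the conversion of the structure condition $H(a)(iii)$ into the coercive pointwise inequality driving the argument, which requires the case distinction $p\ge2$ versus $1<p<2$ to be handled with care. (Alternatively, the entire lemma is Problem~2.192 in Gasi\'nski \& Papageorgiou \cite{10}.)
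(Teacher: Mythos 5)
The paper does not actually prove Lemma~\ref{lem4}: it simply cites Gasi\'nski \& Papageorgiou \cite[Problem 2.192, p.~279]{10} and moves on, so there is no ``paper's own proof'' to compare against. Your argument supplies the missing proof, and it is correct. Boundedness and continuity follow as you say from Lemma~\ref{lem2}(b), H\"older, and the generalized dominated convergence theorem; strict monotonicity is as you state (the key point being that $\|D(u-v)\|_p>0$ forces $\{Du\ne Dv\}$ to have positive measure, where the integrand is strictly positive); and maximal monotonicity follows from the standard criterion for everywhere-defined monotone demicontinuous operators. For the $(S)_+$ step, your reduction to $\langle A(u_n)-A(u),u_n-u\rangle\to0$ is the standard decomposition, and converting $H(a)(iii)$ together with $l(t)\ge c_1 t^{p-1}$ into the pointwise lower bound $\bigl(a(y)-a(y'),\,y-y'\bigr)_{\RR^N}\ge c_1|y-y'|^2\int_0^1|y'+t(y-y')|^{p-2}\,dt$ and then into the coercive inequalities in the two regimes $p\ge2$ and $1<p<2$ is exactly the device needed; the subsequent H\"older step in the singular case $1<p<2$ is handled correctly (with the integrals tacitly restricted to $\{Du_n\ne Du\}$, where $|Du_n|+|Du|>0$, and using boundedness of $\{Du_n\}$ in $L^p$). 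This is essentially the proof one finds in the cited reference; you have merely written it out in full rather than deferring to it.
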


Next, we introduce the hypotheses on the singular nonlinearity $\theta(\cdot)$.

\smallskip
$H(\theta)$: $\vartheta:(0,+\infty)\rightarrow(0,+\infty)$ is a locally Lipschitz function with the following properties:
\begin{itemize}
	\item [(i)] there exists $\beta\in(0,1)$ such that
		\begin{equation*}
			0<c_6\leq\liminf_{x\rightarrow0^+}\vartheta(x)x^\beta\leq\limsup_{x\rightarrow0^+}\vartheta(x)x^\beta\leq c_7;
		\end{equation*}
	\item [(ii)] $\vartheta(\cdot)$ is nonincreasing.
\end{itemize}

In what follows, we set
\begin{equation*}
	\theta(x)=\int^x_0\vartheta(s)ds\quad \mbox{(the primitive of $\vartheta(\cdot)$)}.
\end{equation*}

\begin{remark}\label{rem2}
	Hypothesis $H(\theta)(i)$ implies that $\vartheta(\cdot)$ is singular at $x=0$. The function
	\begin{equation*}
		\vartheta(x)=cx^{-\beta}\ \mbox{for all}\ x>0\ \mbox{and for some}\ c>0,
	\end{equation*}
	which we encounter in the literature, satisfies hypotheses $H(\beta)$.
\end{remark}

Let $v_1,v_2\in {\rm int}\,C_+$ with $v_1\leq v_2$ and introduce the function
\begin{equation*}
	\gamma(z,x) = \left\{
		\begin{array}{ll}
			\vartheta(v_1(z))\ &\mbox{if}\ x<v_1(z)\\
			\vartheta(x)\ &\mbox{if}\ v_1(z)\leq x\leq v_2(z)\\
			\vartheta(v_2(z))\ &\mbox{if}\ v_2(z)<x.
		\end{array}
	\right.
\end{equation*}
This is a Carath\'eodory function. We set $\Gamma(z,x)=\int^x_0\gamma(z,s)ds$ and consider the functional $k_0:W^{1,p}_0(\Omega)\rightarrow\RR$ defined by
\begin{equation*}
	k_0(u) = \int_\Omega\Gamma(z,u(z))dz\ \mbox{for all}\ u\in W^{1,p}_0(\Omega).
\end{equation*}

Minor changes in the proof of Proposition 3 of Papageorgiou \& Smyrlis \cite{28} (in that paper, $\vartheta(x)=x^{-\beta}$ for all $x>0$ with $\beta\in(0,1)$), lead to the following result.

\begin{prop}\label{prop5}
	If hypotheses $H(\theta)$ hold, then $k_0\in C^1(W^{1,p}_0(\Omega),\RR)$ and $k'_0(u)=\gamma(\cdot,u(\cdot))$ for all $u\in W^{1,p}_0(\Omega)$
\end{prop}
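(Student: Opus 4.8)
The plan is to adapt the proof of Proposition~3 in \cite{28}, the one extra ingredient being that the general singular term $\vartheta(\cdot)$ must be controlled near $\partial\Omega$ via hypotheses $H(\theta)$ instead of the explicit bound available for $x^{-\beta}$. First I would record the relevant pointwise bounds. Since $v_1,v_2\in\mathrm{int}\,C_+$ and $\vartheta(\cdot)$ is nonincreasing (hypothesis $H(\theta)(ii)$),
\[
0<\vartheta(v_2(z))\le\gamma(z,x)\le\vartheta(v_1(z))\quad\text{for a.a. }z\in\Omega\text{ and all }x\in\RR,
\]
so $\gamma$ is a Carath\'eodory function with $\gamma(z,\cdot)$ continuous. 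Because $v_1\in\mathrm{int}\,C_+$, there is $c_8>0$ with $v_1(z)\ge c_8\,d(z)$ for all $z\in\overline\Omega$, where $d(z)=\mathrm{dist}(z,\partial\Omega)$; combined with $H(\theta)(i),(ii)$ this produces $c_9>0$ with $\vartheta(v_1(z))\le c_9\,d(z)^{-\beta}$ for a.a. $z\in\Omega$. Since $\beta\in(0,1)$ and $\Omega$ is bounded, $d(\cdot)^{1-\beta}\in L^\infty(\Omega)$, and by the Hardy inequality there is $c_{10}>0$ with $\|h/d\|_p\le c_{10}\|h\|$ for all $h\in W^{1,p}_0(\Omega)$. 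Hence, for all $u,h\in W^{1,p}_0(\Omega)$, $|\gamma(z,u(z))\,h(z)|\le c_9\,d(z)^{1-\beta}\,|h(z)|/d(z)\in L^1(\Omega)$, the map $h\mapsto\int_\Omega\gamma(z,u(z))h(z)\,dz$ is a bounded linear functional on $W^{1,p}_0(\Omega)$, and, using $|\Gamma(z,x)|\le|x|\,\vartheta(v_1(z))$, also $\Gamma(\cdot,u(\cdot))\in L^1(\Omega)$; so $k_0$ is well defined and finite on $W^{1,p}_0(\Omega)$.

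Next I would establish Gateaux differentiability with the announced derivative. Fix $u,h\in W^{1,p}_0(\Omega)$; for $0<|t|\le1$,
\[
\frac{k_0(u+th)-k_0(u)}{t}=\int_\Omega\frac{\Gamma(z,u(z)+th(z))-\Gamma(z,u(z))}{t}\,dz .
\]
Since $\partial_x\Gamma(z,\cdot)=\gamma(z,\cdot)$ is continuous, the mean value theorem provides, for a.a. $z$, some $\tau=\tau_t(z)\in(0,1)$ for which the integrand equals $\gamma(z,u(z)+\tau th(z))\,h(z)$, bounded in modulus by $\vartheta(v_1(z))\,|h(z)|\in L^1(\Omega)$ uniformly in $t$. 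Letting $t\to0$ and using the continuity of $\gamma(z,\cdot)$ and the dominated convergence theorem gives $\lim_{t\to0}\frac{k_0(u+th)-k_0(u)}{t}=\int_\Omega\gamma(z,u(z))\,h(z)\,dz$. Thus $k_0$ is Gateaux differentiable and its Gateaux derivative at $u$ is the element $\gamma(\cdot,u(\cdot))\in W^{-1,p'}(\Omega)$ identified in the first step.

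To conclude that $k_0\in C^1(W^{1,p}_0(\Omega),\RR)$ it remains to show that $u\mapsto\gamma(\cdot,u(\cdot))$ is continuous from $W^{1,p}_0(\Omega)$ into $W^{-1,p'}(\Omega)$, since then the Gateaux derivative is continuous and hence $k_0$ is $C^1$. If $u_n\to u$ in $W^{1,p}_0(\Omega)$, then for $h\in W^{1,p}_0(\Omega)$ with $\|h\|\le1$, H\"older's inequality and the Hardy bound give
\[
|\langle\gamma(\cdot,u_n(\cdot))-\gamma(\cdot,u(\cdot)),h\rangle|\le\|(\gamma(\cdot,u_n(\cdot))-\gamma(\cdot,u(\cdot)))\,d\|_{p'}\,\|h/d\|_p\le c_{10}\,\|(\gamma(\cdot,u_n(\cdot))-\gamma(\cdot,u(\cdot)))\,d\|_{p'} .
\]
A subsequence of $\{u_n\}_{n\ge1}$ converges to $u$ a.e. in $\Omega$ (Sobolev embedding), hence $\gamma(z,u_n(z))\to\gamma(z,u(z))$ a.e. by the Carath\'eodory property, while $|(\gamma(\cdot,u_n(\cdot))-\gamma(\cdot,u(\cdot)))\,d|\le2c_9\,d^{1-\beta}\in L^{p'}(\Omega)$; the dominated convergence theorem then sends the right-hand side above to $0$ along that subsequence, and the usual subsequence argument upgrades this to convergence of the whole sequence. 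Therefore $k_0'$ is continuous and $k_0'(u)=\gamma(\cdot,u(\cdot))$ for all $u\in W^{1,p}_0(\Omega)$.

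The two applications of the dominated convergence theorem are routine; the only genuinely delicate point — and the reason $H(\theta)$ is phrased as it is — is taming the boundary singularity of $\vartheta$, which is handled by combining the lower bound $v_1\ge c_8\,d$ valid for functions in $\mathrm{int}\,C_+$ with the Hardy inequality, so that the products $\gamma(\cdot,u(\cdot))\,h$ and $\Gamma(\cdot,u(\cdot))$ belong to $L^1(\Omega)$ with the requisite uniform estimates. Everything else is a verbatim transcription of the argument in \cite{28}.
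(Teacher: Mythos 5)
Your proposal is correct and follows exactly the route the paper intends: the paper itself does not write out a proof but instead defers to Proposition~3 of Papageorgiou \& Smyrlis \cite{28} ``with minor changes,'' and you have spelled out precisely those changes — namely, replacing the explicit pointwise bound available when $\vartheta(x)=x^{-\beta}$ by the estimate $\vartheta(v_1(z))\le c_9\,d(z)^{-\beta}$, which you derive from $v_1\in\mathrm{int}\,C_+$ (so $v_1\ge c_8\,d$) together with hypothesis $H(\theta)(i)$ near $0$ and the monotonicity $H(\theta)(ii)$ away from $0$. The rest (Hardy's inequality, the uniform $L^1$ majorant $\vartheta(v_1(\cdot))|h(\cdot)|$, the mean value theorem plus dominated convergence for G\^ateaux differentiability, and the $L^{p'}$-dominated convergence with the Hardy/H\"older pairing to get continuity of $u\mapsto\gamma(\cdot,u(\cdot))$ into $W^{-1,p'}(\Omega)$, hence $C^1$) is a faithful transcription of the argument in \cite{28}.
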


We will also need strong comparison principles for singular problems.

\begin{prop}\label{prop6}
	Assume that hypotheses $H(a)$ and $H(\theta)$ hold, $\hat{\xi}\in L^\infty(\Omega)$, $\hat{\xi}(z)\geq0$ for almost all $z\in\Omega$, and $h_1,\, h_2\in L^\infty(\Omega)$ satisfy
	\begin{equation*}
		0<c_8\leq h_2(z)-h_1(z)\ \mbox{for almost all}\ z\in\Omega.
	\end{equation*}
Let	$u,v\in C^{1,\alpha}(\overline\Omega)\backslash\{0\}, \ 0<u(z)\leq v(z)$ for all $z\in\Omega$, $\lambda>0$, and for almost all $z\in\Omega$
	\begin{equation*}
		\begin{array}{ll}
			-{\rm div}\,a(Du(z))-\lambda\vartheta(u(z)) + \hat\xi(z)u(z)^{p-1} = h_1(z);\\
			-{\rm div}\,a(Dv(z))-\lambda\vartheta(v(z)) + \hat\xi(z)v(z)^{p-1} = h_2(z).
		\end{array}
	\end{equation*}
	Then $v-u\in {\rm int}\,\hat{C}_+$.
\end{prop}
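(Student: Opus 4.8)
The plan is to prove Proposition~\ref{prop6} by reducing the problem to a form where Lieberman's regularity theory and a strong maximum / Hopf-type principle apply. Set $w = v - u \geq 0$ on $\Omega$. First I would subtract the two equations to obtain, in the weak sense,
\begin{equation*}
	-{\rm div}\,\bigl(a(Dv) - a(Du)\bigr) = \lambda\bigl(\vartheta(v) - \vartheta(u)\bigr) - \hat\xi\bigl(v^{p-1} - u^{p-1}\bigr) + \bigl(h_2 - h_1\bigr)\ \mbox{in}\ \Omega.
\end{equation*}
Since $0 < u \leq v$ on $\Omega$ and $\vartheta(\cdot)$ is nonincreasing (hypothesis $H(\theta)(ii)$), we have $\vartheta(v) - \vartheta(u) \leq 0$; but because $u, v \in C^{1,\alpha}(\overline\Omega)$ with $u, v > 0$ inside $\Omega$, on every compact subset $K \Subset \Omega$ the values $u, v$ are bounded away from $0$, so $\vartheta(u), \vartheta(v)$ and the local Lipschitz modulus of $\vartheta$ are bounded on $K$. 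Hence locally the singular term contributes a bounded perturbation, and the right-hand side, on $K$, is bounded below by $c_8 + \lambda\bigl(\vartheta(v) - \vartheta(u)\bigr) - \hat\xi(v^{p-1}-u^{p-1})$, which I want to show is strictly positive after absorbing terms; the key is that $c_8 > 0$ is a fixed gap while the other two terms are controlled. Actually the cleaner route is to rewrite the equation as a linear-type inequality in $w$: using the mean value form $a(Dv) - a(Du) = \bigl(\int_0^1 \nabla a(Du + t Dw)\,dt\bigr) Dw =: \mathcal{A}(z) Dw$, where by $H(a)(iii)$ the matrix $\mathcal{A}(z)$ is uniformly elliptic on compact subsets of $\Omega$ (since $|Du|, |Dv|$ are bounded there), and similarly $v^{p-1} - u^{p-1} = \hat c(z) w$ with $\hat c \geq 0$ bounded, and $\vartheta(v) - \vartheta(u) = -\tilde c(z) w$ with $\tilde c \geq 0$ bounded on compacts. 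This yields
\begin{equation*}
	-{\rm div}\,\bigl(\mathcal{A}(z) Dw\bigr) + \bigl(\lambda\tilde c(z) + \hat\xi(z)\hat c(z)\bigr) w = h_2 - h_1 \geq c_8 > 0\ \mbox{in}\ \Omega,
\end{equation*}
a uniformly elliptic equation (on compacts) for $w \geq 0$ with bounded nonnegative zeroth-order coefficient and strictly positive right-hand side.

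From here the interior conclusion $w(z) > 0$ for all $z \in \Omega$ follows from the strong maximum principle: if $w$ vanished at an interior point it would attain an interior minimum $0$, contradicting that the right-hand side is strictly positive (a nonnegative supersolution of an elliptic equation with positive forcing cannot touch zero inside). For the boundary behavior on $\partial\Omega \cap w^{-1}(0)$ — i.e. where $v$ and $u$ agree on the boundary — I would invoke the nonlinear Hopf-type boundary point lemma of Pucci--Serrin \cite{31} together with Lieberman's boundary regularity \cite{18}, exactly as these are cited in Remark~\ref{rem1} as the reason for hypotheses $H(a)(i)(ii)(iii)$: at such a boundary point $z_0$, $w$ has a boundary minimum $0$, $w$ is $C^1$ up to the boundary, and the strict positivity of $h_2 - h_1$ forces $\frac{\partial w}{\partial n}(z_0) < 0$. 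The point is that near $\partial\Omega$ the gradients $Du, Dv$ need not be bounded away from singularities of $\nabla a$, but $H(a)$ is precisely tailored so that the comparison/regularity machinery still applies; one should either verify $u, v \in {\rm int}\,\hat C_+$ type positivity to control the coefficients near the boundary, or restrict attention to a neighborhood where $|Du| + |Dv|$ stays in a range where $H(a)(ii)(iii)$ give two-sided bounds on $\mathcal{A}(z)$.

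The main obstacle I anticipate is handling the singular term $\lambda\bigl(\vartheta(v) - \vartheta(u)\bigr)$ near the boundary portion where $u = v = 0$: there $\vartheta(u), \vartheta(v) \to +\infty$, so the difference is an indeterminate form and the coefficient $\tilde c(z)$ above may blow up. The monotonicity hypothesis $H(\theta)(ii)$ saves the sign (the singular difference has the favorable sign $\leq 0$, working against $w$ being pushed down, hence \emph{for} the conclusion $w > 0$ — wait, sign check: we moved $\lambda(\vartheta(v)-\vartheta(u))$ to the left as $+\lambda\tilde c w$ with $\tilde c \geq 0$, which is a good, coercive-type zeroth order term), and the growth hypothesis $H(\theta)(i)$ with $\beta \in (0,1)$ ensures $\vartheta$ is integrable near $0$ so that everything makes sense in $L^1_{loc}$ and the weak formulation is legitimate. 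Concretely I would localize: cover $\overline\Omega$ by interior balls (where the above elliptic argument runs cleanly) and boundary half-balls, and on the boundary pieces use that $u \in C^{1,\alpha}$ already gives $u(z) \sim d(z,\partial\Omega)$ near points of $\partial\Omega$ where $\frac{\partial u}{\partial n} < 0$, so $\vartheta(u) \sim d^{-\beta}$ is a controlled weight; a barrier/comparison argument with an explicit subsolution then yields the Hopf conclusion. This is the standard strategy for singular strong comparison principles (cf. the $\vartheta(x) = x^{-\beta}$ case in \cite{28,27,30}) and the adaptation to general $\vartheta$ satisfying $H(\theta)$ is routine once the sign and integrability of the singular perturbation are pinned down.
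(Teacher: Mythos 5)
Your proof takes essentially the same route as the paper's: subtract the two equations, use the fundamental-theorem-of-calculus representation to write $a(Dv)-a(Du)$ as a frozen-coefficient matrix $\mathcal{A}(z)$ applied to $D(v-u)$ (the paper's coefficients $\tilde c_{k,i}(z)=\int_0^1\frac{\partial a_k}{\partial y_i}(Du+t(Dv-Du))\,dt$, yielding the linear divergence-form operator $L$), and then conclude interior positivity by a strong maximum/Harnack principle and the boundary normal-derivative estimate by the Hopf lemma. The one place you diverge is bookkeeping for the lower-order terms: you linearize $\lambda[\vartheta(v)-\vartheta(u)]$ and $\hat\xi[v^{p-1}-u^{p-1}]$ as nonnegative zeroth-order coefficients and move them to the left, keeping $h_2-h_1\geq c_8$ as a uniformly positive forcing for $w=v-u$; the paper instead leaves them on the right and, at an interior contact point $z_0$ (where $u(z_0)=v(z_0)>0$), uses local uniform continuity of $\vartheta$ on $(\mu,\infty)$ and of $x\mapsto x^{p-1}$ to show the full right-hand side remains $\geq c_8/2$ on a small ball $B_\rho(z_0)$ before invoking Vazquez. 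Both devices work and give the same interior conclusion. Your caveat about the linearized singular coefficient blowing up near $\partial\Omega\cap(v-u)^{-1}(0)$ is a fair one, and the paper's own Hopf step is no more explicit on this point than yours: it simply places a tangent interior ball $B_d$ and cites the Hopf boundary lemma without checking the sign or size of $L(\eta)$ on all of $B_d$ near the touching point, where $\lambda[\vartheta(v)-\vartheta(u)]\leq 0$ is not controlled by continuity. So your barrier/rate sketch is exactly the kind of supplement needed; the blind proposal matches the paper in structure and is, if anything, more forthright about the delicate boundary step.
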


\begin{proof}
	We have
	\begin{eqnarray}
		&&-{\rm div}\,(a(Dv(z)) - a(Du(z))) \nonumber \\
		&&=h_2(z)-h_1(z) + \lambda[\vartheta(v(z)) - \vartheta(u(z))] - \hat\xi(z)[v(z)^{p-1}-u(z)^{p-1}]\ \mbox{for almost all}\ z\in\Omega. \label{eq3}
	\end{eqnarray}

	Let $\{a_k(\cdot)\}^N_{k=1}$ be the components of the map $a(\cdot)$. For $y=(y_i)^N_{i=1},y'=(y'_i)^N_{i=1}\in\RR^N$	we have
	\begin{eqnarray}
		& a(y)-a(y')&=\int^1_0\frac{d}{dt}a(y'+t(y-y'))dt \nonumber \\
		&&=\int^1_0\nabla a(y'+t(y-y'))(y-y')dt\ \mbox{(by the chain rule)} \nonumber \\
		\Rightarrow & a_k(y)-a_k(y')&=\sum^N_{i=1}\frac{\partial a_k}{\partial y_i}(y'+t(y-y'))(y_i-y'_i)dt \ \mbox{for all}\ k\in\{1,...,N\}. \label{eq4}
	\end{eqnarray}
	
	Setting $\nabla_i=\frac{\partial}{\partial y_i}$ for $i\in\{1,...,N\}$, we introduce the following functions:
	\begin{equation*}
		\tilde{c}_{k,i}(z)=\int^1_0\frac{\partial a_k}{\partial y_i}(\nabla u(z)+t(\nabla v(z)-\nabla u(z)))dt\ \mbox{for all}\ k,i\in\{1,\dots,N\}.
	\end{equation*}
	
	On account of our hypothesis on $h_2-h_1$, we have that $\tilde{c}_{k,i}\in W^{1,\infty}(\Omega)$. We introduce the following second order differential operator in divergence form:
	\begin{equation}\label{eq5}
		L(w)=-{\rm div}\,(\sum^N_{i=1}\tilde{c}_{k,i}(z)\nabla_i w)=-\sum^N_{k,i=1}\nabla_k(\tilde{c}_{k,i}(z)\nabla_i w).
	\end{equation}
	
	Let $\eta=v-u\in C^{1,\alpha}(\overline\Omega)$. By hypothesis we have
	\begin{equation*}
		0\leq\eta(z)\ \mbox{for all}\ z\in\Omega,\ \eta\neq0.
	\end{equation*}
	
We	first suppose that for some $z_0\in\Omega$, we have $\eta(z_0)=0$, hence $v(z_0)=u(z_0)$. The functions
	\begin{equation*}
		x\mapsto x^{p-1}\ \mbox{and}\ x\mapsto\lambda\vartheta(x)
	\end{equation*}
	are uniformly continuous on $(0,+\infty)$ and on $(\mu,+\infty),\ \mu>0$, respectively. Since $\hat{\xi}\in L^\infty(\Omega)$ and $u(z_0)>0$, we can find small enough $\rho>0$ such that
	\begin{eqnarray}\label{eq6}
		h_2(z)-h_1(z)+\lambda[\vartheta(v(z))-\vartheta(u(z))] - \hat\xi(z)[v(z)^{p-1}-u(z)^{p-1}]\geq\frac{c_8}{2}>0 \\
		\mbox{for almost all}\ z\in B_\rho(z_0)=\{z'\in\RR^N:|z'-z_0|<\rho\}\subseteq\Omega. \nonumber
	\end{eqnarray}
	
	From \eqref{eq3}--\eqref{eq6} we see that
	\begin{equation*}
		L(\eta)(z)\geq \frac{c_8}{2}>0\ \mbox{for almost all}\ z\in B_\rho(z_0).
	\end{equation*}
	
	Invoking Theorem 4 of Vazquez \cite{34} (alternatively we can also use the Harnack inequality, see Pucci \& Serrin \cite[p. 163]{31}), we have
	\begin{equation*}
		\eta(z)>0\ \mbox{for all}\ z\in B_\rho(z_0),
	\end{equation*}
	a contradiction to the fact that $\eta(z_0)=0$. Therefore
	\begin{equation*}
		u(z)<v(z)\ \mbox{for all}\ z\in\Omega.
	\end{equation*}
	
	Next, let $z_0\in\partial\Omega$ be such that $\eta(z_0)=0$. Since by hypothesis, $\partial\Omega$ is a $C^2$-manifold, we can find small enough $d>0$ and an open $d$-ball $B_d$ such that
	\begin{equation*}
		B_d\subseteq\Omega\ \mbox{and}\ \partial\Omega\cap\partial B_d\ni z_0.
	\end{equation*}
	
	Then the Hopf boundary point theorem (see Theorem 5.5.1 of Pucci \& Serrin \cite[p. 120]{31} or Vazquez \cite[Theorem 5]{34}) implies that
	\begin{equation*}
		\frac{\partial \eta}{\partial n}(z_0)<0.
	\end{equation*}
	
	We conclude that $\eta=v-u\in {\rm int}\,\hat{C}_+$.
\end{proof}

The other strong comparison principle that we will use can be found in Papageorgiou \& Smyrlis \cite[Proposition 4]{28}. In \cite{28}, the differential operator is the $p$-Laplacian (that is, $a(y)=|y|^{p-2}y$ for all $y\in\RR^N$). With minor changes in the proof, as we did in the first part of the previous proof, we can obtain the result also for the more general differential operators of this paper. For more details on strong comparison principles involving nonlinear differential operators, we refer to Arcoya \& Ruiz \cite[Proposition 2.6]{2}, Gasinski \& Papageorgiou \cite[Section 3]{11} and Pucci \& Serrin \cite[Section 3.4]{31}.

First, we introduce the following notation. For functions $h_1,h_2\in L^\infty(\Omega)$ we write $h_1\prec h_2$, if for every compact $K\subseteq\Omega$, we can find $\eta_K>0$ such that
\begin{equation*}
	\eta_K\leq h_2(z)-h_1(z)\ \mbox{for almost all}\ z\in K.
\end{equation*}

Evidently, if $h_1,h_2\in C(\Omega)$ and $h_1(z)<h_2(z)$ for all $z\in\Omega$, then $h_1\prec h_2$.

\begin{prop}\label{prop7}
	Assume that hypotheses $H(\theta)$ hold, $\hat\xi\in L^\infty(\Omega)$, $\hat\xi(z)\geq0$ for almost all $z\in\Omega$, $h_1,h_2\in L^\infty(\Omega)$, $ h_1\prec h_2$, $u\in C_+$ with $u(z)>0$ for all $z\in\Omega$, $v\in {\rm int}\,C_+$, and for almost all $z\in\Omega$
	\begin{eqnarray*}
		-{\rm div}\,a(Du(z)) - \lambda\vartheta(u(z)) + \hat\xi(z)u(z)^{p-1} = h_1(z); \\
		-{\rm div}\,a(Dv(z)) - \lambda\vartheta(v(z)) + \hat\xi(z)v(z)^{p-1} = h_2(z).
	\end{eqnarray*}
	Then $v-u\in {\rm int}\,C_+$.
\end{prop}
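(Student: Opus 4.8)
The plan is to follow the linearization argument from the proof of Proposition~\ref{prop6}: subtract the two equations, freeze the coefficients of the leading operator along the segment joining $Du$ and $Dv$, and then apply the strong maximum principle of Vazquez together with Hopf's boundary point lemma. Relative to Proposition~\ref{prop6} there are two modifications: the uniform gap $c_8\le h_2-h_1$ is weakened to the localized condition $h_1\prec h_2$, and $u$ is only assumed positive inside $\Omega$ rather than lying in ${\rm int}\,C_+$. First I would record that $u\le v$ in $\overline\Omega$ (if this is not already part of the standing hypotheses): subtracting the equations and testing with $(u-v)^+\in W^{1,p}_0(\Omega)$, the strict monotonicity of $A$ (Lemma~\ref{lem4}), the sign $\hat\xi\ge0$, the monotonicity of $\vartheta$ from $H(\theta)(ii)$, and $h_1\prec h_2\Rightarrow h_1\le h_2$ force $||D(u-v)^+||_p=0$, hence $(u-v)^+\equiv0$; the singular terms pair integrably with $(u-v)^+$ because $\vartheta(x)x\to0^+$ as $x\to0^+$ by $H(\theta)(i)$ and $v\in{\rm int}\,C_+$ controls $u/v$ near $\partial\Omega$.

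Next, set $\eta=v-u\in C^1_0(\overline\Omega)$, so $\eta\ge0$ and $\eta\ne0$. Subtracting the equations gives, for a.a. $z\in\Omega$,
\[
-{\rm div}\,(a(Dv(z))-a(Du(z)))=[h_2(z)-h_1(z)]+\lambda[\vartheta(v(z))-\vartheta(u(z))]-\hat\xi(z)[v(z)^{p-1}-u(z)^{p-1}].
\]
As in Proposition~\ref{prop6}, the chain rule gives $a(Dv)-a(Du)=\big(\int_0^1\nabla a(Du+t(Dv-Du))\,dt\big)(Dv-Du)$, so with $\tilde c_{k,i}(z)=\int_0^1\frac{\partial a_k}{\partial y_i}(Du(z)+t(Dv(z)-Du(z)))\,dt$ and $L(w)=-\sum_{k,i}\nabla_k(\tilde c_{k,i}\nabla_i w)$ the identity becomes $L(\eta)=[h_2-h_1]+\lambda[\vartheta(v)-\vartheta(u)]-\hat\xi[v^{p-1}-u^{p-1}]$. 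Suppose $\eta(z_0)=0$ for some $z_0\in\Omega$, so $u(z_0)=v(z_0)>0$. On a small ball $B_\rho(z_0)$ with $\overline{B_\rho(z_0)}\subseteq\Omega$ the functions $u$ and $v$ are bounded away from $0$, where $\vartheta$ and $x\mapsto x^{p-1}$ are Lipschitz; hence $|\vartheta(v)-\vartheta(u)|\le c\,\eta$ and $|v^{p-1}-u^{p-1}|\le c\,\eta$ there, and using $v\ge u\ge0$, $\hat\xi\ge0$, and $h_1\prec h_2$ (so $h_2-h_1\ge\eta_K>0$ on $\overline{B_\rho(z_0)}$) one gets $L(\eta)+c'\eta\ge\eta_K>0$ on $B_\rho(z_0)$. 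Thus $\eta$ is a nontrivial nonnegative supersolution of $w\mapsto L(w)+c'w$ attaining the value $0$ at the interior point $z_0$, and the strong maximum principle (Vazquez~\cite[Theorem 4]{34}, or the Harnack inequality in Pucci \& Serrin~\cite[p. 163]{31}) forces $\eta\equiv0$ near $z_0$; but then $u=v$ on $B_\rho(z_0)$, so $L(\eta)=h_2-h_1\ge\eta_K>0$ there while $L(\eta)=L(0)=0$, a contradiction. Hence $\eta(z)>0$ for all $z\in\Omega$.

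Finally, given $z_0\in\partial\Omega$, use the $C^2$-regularity of $\partial\Omega$ to choose an interior ball $B_d\subseteq\Omega$ with $z_0\in\partial B_d\cap\partial\Omega$; on $B_d$ we have $\eta>0$ and, near $z_0$, $L(\eta)+c'\eta\ge0$, so Hopf's boundary point lemma (Pucci \& Serrin~\cite[Theorem 5.5.1]{31}, or Vazquez~\cite[Theorem 5]{34}) yields $\frac{\partial\eta}{\partial n}(z_0)<0$; since $z_0$ was arbitrary, $\eta=v-u\in{\rm int}\,C_+$. The main obstacle --- and the one place where $v\in{\rm int}\,C_+$ (not merely $v\in C_+$ with $v>0$ in $\Omega$) is essential --- is checking that the frozen coefficients $\tilde c_{k,i}$ remain bounded and uniformly elliptic up to the relevant part of $\partial\Omega$ even though $u$ may have vanishing gradient on $u^{-1}(0)\cap\partial\Omega$: there $\frac{\partial v}{\partial n}<0$ keeps $Dv$, hence $Du+t(Dv-Du)$ for $t$ bounded below, away from the origin, and the structure conditions $H(a)(ii),(iii)$ then control $\tilde c_{k,i}$ exactly as in the first part of the proof of Proposition~\ref{prop6}.
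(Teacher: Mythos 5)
Your overall strategy --- subtract the equations, freeze the coefficients along the segment joining $Du$ and $Dv$, then apply the strong maximum principle and Hopf's boundary point lemma --- is exactly what the paper has in mind; note that the paper does not actually prove Proposition~\ref{prop7} but refers to Papageorgiou \& Smyrlis~\cite[Proposition~4]{28}, to be modified via the linearization used in the first part of the proof of Proposition~\ref{prop6}. Your direct test-function derivation of $u\le v$, the interior contradiction argument, and the invocation of Hopf's lemma are all consonant with that plan.

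The boundary step, however, is not handled carefully enough, and this is where a genuine gap remains. First, at a point of $\partial\Omega$ where $Du=0$ but $Dv\ne0$, the segment $Du+t(Dv-Du)=tDv$ passes through the origin at $t=0$, so the phrase ``$Du+t(Dv-Du)$ for $t$ bounded below, away from the origin'' does not control the full integral $\int_0^1\nabla a\big(Du+t(Dv-Du)\big)\,dt$; what one actually needs --- and what hypotheses $H(a)(ii),(iii)$ together with \eqref{eq1} deliver, because the singularity of $l(|y|)/|y|$ at $y=0$ is integrable along rays --- is that $\int_0^1\nabla a(tDv)\,dt$ is a bounded, uniformly elliptic matrix on a collar of $\partial\Omega$ where $|Dv|\ge c>0$. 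Second, and more substantively, you never address the zeroth-order coefficient $c'$ that you fold into the operator $L+c'$: it is essentially the local Lipschitz constant of $\vartheta$ (and of $x\mapsto x^{p-1}$) over the range of $u$ and $v$, and since $\vartheta$ is singular at $0$ this constant blows up as $u,v\to0$ near $\partial\Omega$. The Hopf lemma on an interior ball touching $\partial\Omega$ requires that blow-up to be tamed, typically by combining the linear lower barrier $v(z)\ge c\,d(z,\partial\Omega)$ (available because $v\in\mathrm{int}\,C_+$) with $H(\theta)(i)$ to obtain a quantitative rate for $\vartheta(v)-\vartheta(u)$. This is the genuinely singular part of the argument and is precisely what the cited Papageorgiou--Smyrlis proof must take care of; your sketch elides it.
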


\begin{remark}
	Note that by the weak comparison principle (see Damascelli \cite[Theorem 1.2]{6}), we have $u\leq v$.
\end{remark}

Another result which we will use, is the following one about ordered Banach spaces (see Gasinski \& Papageorgiou \cite[Problem 4.180, p.680]{10}).

\begin{prop}\label{prop8}
	Let $X$ be an ordered Banach space with positive (order) cone $K$ such that
	\begin{equation*}
		{\rm int}\, K\neq\emptyset\ \mbox{and}\ u_0\in {\rm int}\,K.
	\end{equation*}
	Then for every $u\in K$, we can find $\lambda_u>0$ such that $\lambda_uu_0-u\in K$.
\end{prop}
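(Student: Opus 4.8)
The plan is to use the only hypothesis available, namely that $u_0$ lies in the \emph{interior} of the cone $K$, to produce an open ball centered at $u_0$ which is contained in $K$, and then to rescale that ball. So first I would fix a radius $r>0$ such that
$$B_r(u_0)=\{x\in X:||x-u_0||_X<r\}\subseteq K,$$
which is possible precisely because $u_0\in{\rm int}\,K$.

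Next, given $u\in K$, I would dispose of the trivial case $u=0$ by taking $\lambda_u=1$, since then $\lambda_u u_0-u=u_0\in K$. If $u\neq 0$, I would consider the point
$$w:=u_0-\frac{r}{2\,||u||_X}\,u,$$
and observe that $||w-u_0||_X=\frac{r}{2\,||u||_X}\,||u||_X=\frac{r}{2}<r$, so that $w\in B_r(u_0)\subseteq K$. Finally, since $K$ is a cone it is stable under multiplication by nonnegative scalars, hence
$$\frac{2\,||u||_X}{r}\,w=\frac{2\,||u||_X}{r}\,u_0-u\in K,$$
and setting $\lambda_u:=\frac{2\,||u||_X}{r}>0$ completes the argument.

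I do not expect any genuine obstacle here. The single point to keep in mind is that the membership $u_0\in{\rm int}\,K$ is exactly what licenses the existence of the ball $B_r(u_0)\subseteq K$, and that the cone axioms — in particular closure under nonnegative scalar multiplication — are what make the final rescaling legitimate. Note that the argument does not actually use $u\in K$ anywhere, so the same conclusion holds for every $u\in X$; the statement is phrased for $u\in K$ only because that is the form needed later in the paper.
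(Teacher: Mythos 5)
Your proof is correct, and it is the standard argument for this statement: rescale an interior ball around $u_0$ to swallow $u$. The paper does not actually supply a proof for this proposition — it simply cites Gasinski \& Papageorgiou (Problem 4.180, p.~680 of \emph{Exercises in Analysis, Part 2}) — so there is no internal proof to compare against, but the argument you give is exactly the one behind that reference. Your parenthetical observation that the hypothesis $u\in K$ is nowhere used, so the conclusion holds for arbitrary $u\in X$, is also accurate; this stronger form is sometimes stated by saying that any $u_0\in{\rm int}\,K$ is an \emph{order unit} for $X$.
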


We will also need some facts about the first eigenvalue of $(-\Delta_\tau, W^{1,\tau}_0(\Omega))$, for $ 1<\tau<\infty$. So, we consider the following nonlinear eigenvalue problem:
\begin{equation}\label{eq7}
	-\Delta_\tau u(z)=\hat\lambda|u(z)|^{\tau-2}u(z)\ \mbox{in}\ \Omega,\ u|_{\partial\Omega}=0.
\end{equation}

We say that $\hat\lambda$ is an eigenvalue, if problem \eqref{eq7} admits a nontrivial solution $\hat{u}\in W^{1,\tau}_0(\Omega)$, known as an eigenfunction corresponding to the eigenvalue $\hat\lambda$. The nonlinear regularity theory for the $p$-Laplacian (see, for example, Gasinski \& Papageorgiou \cite[pp. 737-738]{9}), we deduce that $\hat{u}\in C^1_0(\overline\Omega)$. We know that there exists a smallest eigenvalue $\hat\lambda_1(\tau)$ having the following properties:
\begin{itemize}
	\item $\hat\lambda_1(\tau)$ is isolated (that is, we can find $\epsilon>0$ such that $(\hat\lambda_1(\tau),\hat\lambda_1(\tau)+\epsilon)$ does not contain any eigenvalue);
	\item $\hat\lambda_1(\tau)$ is simple (that is, if $\hat{u},\hat{v}$ are eigenfunctions corresponding to $\hat\lambda_1(\tau)$, then $\hat{u}=\xi\hat{v}$ for some $\xi\in\RR\backslash\{0\}$);
	\item we have
		\begin{equation}\label{eq8}
			\hat\lambda_1(\tau)=\inf\left\{\frac{||Du||^\tau_\tau}{||u||^\tau_\tau}:u\in W^{1,\tau}_0(\Omega)\right\}>0.
		\end{equation}
\end{itemize}

The infimum in \eqref{eq8} is realized on the corresponding one-dimensional eigenspace. From the above properties it follows that the elements of this eigenspace do not change sign. By $\hat{u}_1(\tau)$ we denote the positive, $L^\tau$-normalized (that is, $||\hat{u}_1(\tau)||_\tau=1$) eigenfunction. By the nonlinear maximum principle (see Vazquez \cite[Theorem 5]{34}), we have $\hat{u}_1(\tau)\in {\rm int}\,C_+$.

Now we will introduce some basic notation which we will use throughout this paper. So, let $x\in\RR$. We set $x^\pm=\max\{\pm x,0\}$. Then if $u\in W^{1,p}_0(\Omega)$ we define
\begin{equation*}
	u^\pm(z)=u(z)^\pm\ \mbox{for all}\ z\in\Omega.
\end{equation*}

We know that
\begin{equation*}
	u^\pm\in W^{1,p}_0(\Omega),\ u=u^+-u^-,\ |u|=u^++u^-.
\end{equation*}

If $u,v\in W^{1,p}_0(\Omega)$ and $u\leq v$, then
\begin{equation*}
	[u,v]=\{y\in W^{1,p}_0(\Omega):u(z)\leq y(z)\leq v(z)\ \mbox{for almost all}\ z\in\Omega\}.
\end{equation*}

Also, we denote by ${\rm int}_{C^1_0(\overline\Omega)}[u,v]$,  the interior in the $C^1_0(\overline\Omega)$-norm topology of $[u,v]\cap C^1_0(\overline\Omega)$ and $[u)=\{y\in W^{1,p}_0(\Omega):u(z)\leq y(z)\ \mbox{for almost all}\ z\in\Omega\}$.

Recall that $p'$ is the conjugate exponent of $p>1$, that is, $\frac{1}{p}+\frac{1}{p'}=1$. We denote  by $p^*$ the critical Sobolev exponent corresponding to $p>1$, that is,
\begin{equation*}
	p^*=\left\{
		\begin{array}{ll}
			\frac{Np}{N-p} &\mbox{if}\ N>p\\
			+\infty &\mbox{if}\ N\leq p.
		\end{array}
	\right.
\end{equation*}

Also, if $X$ is a Banach space and $\varphi\in C^1(X,\RR)$, then $K_\varphi=\{u\in X:\varphi'(u)=0\}$ (the critical set of $\varphi$).

Finally, we introduce the hypotheses on the perturbation term $f(z,x)$.

\smallskip
$H(f):$ $f:\Omega\times\RR\rightarrow\RR$ is a Carath\'eodory function such that $f(z,0)=0$ for almost all $z\in\Omega$ and
\begin{itemize}
	\item [(i)] $f(z,x)\leq a(z)\,(1+x^{r-1})$ for almost all $z\in\Omega$ and all $x\geq0$ with $a\in L^\infty(\Omega)$, $p<r<p^*$;
	\item [(ii)] if $F(z,x)=\int^x_0 f(z,s)ds$, then $\lim_{x\rightarrow+\infty}\frac{F(z,x)}{x^p}=+\infty$ uniformly for almost all $z\in\Omega$;
	\item [(iii)] if for every $\lambda>0$, we define $$e_\lambda(z,x)=\lambda[\vartheta(x)x - p\theta(x)] + [f(z,x)x-pF(z,x)],$$ then there exists $\hat\beta_\lambda\in L^1(\Omega)$ such that the map $\lambda\mapsto\hat\beta_\lambda$ is nondecreasing and
		\begin{equation*}
			e_\lambda(z,x)\leq e_\lambda(z,y) + \hat\beta_\lambda(z)\ \mbox{for almost all}\ z\in\Omega,\ \mbox{and all}\ 0\leq x\leq y;
		\end{equation*}
	\item [(iv)] for every $s>0$, there exists $\tilde\eta_s>0$ such that
		\begin{equation*}
			\tilde\eta_s\leq f(z,x)\ \mbox{for almost all}\ z\in\Omega,\ \mbox{and all}\ x\geq s
		\end{equation*}
		and
		\begin{equation*}
			\lim_{x\rightarrow0^+}\frac{f(z,x)}{x^{p-1}}=0\ \mbox{uniformly for almost all}\ z\in\Omega;
		\end{equation*}
	\item [(v)] for every $\rho>0$, there exists $\hat\xi_\rho>0$ such that for almost all $z\in\Omega$ the function
		\begin{equation*}
			x\mapsto f(z,x) + \hat\xi_\rho x^{p-1}
		\end{equation*}
		is nondecreasing on $[0,\rho]$.
\end{itemize}

\begin{remark}
	Since we are looking for positive solutions and the above hypotheses concern the positive semiaxis $\RR_+=[0,+\infty)$,  we may assume without any loss of generality that
	\begin{equation}\label{eq9}
		f(z,x)=0\ \mbox{for almost all}\ z\in\Omega,\ \mbox{and all}\ x\leq0.
	\end{equation}
\end{remark}

Hypotheses $H(f)(ii),(iii)$ together with hypothesis $H(\vartheta)(ii)$ imply that
\begin{equation*}
	\lim_{x\rightarrow+\infty}\frac{f(z,x)}{x^{p-1}}=+\infty\ \mbox{uniformly for almost all}\ z\in\Omega.
\end{equation*}

So, the perturbation term $f(z,\cdot)$ is $(p-1)$-superlinear near $+\infty$. However, we do not use the standard (in such cases) AR-condition. Recall that the AR-condition (unilateral version because of \eqref{eq9}), says that there exist $M>0$ and $\tau>p$ such that
\begin{equation}
	0<\tau F(z,x)\leq f(z,x)x\ \mbox{for almost all}\ z\in\Omega,\ \mbox{and all}\ x\geq M
	\tag{$10a$}\label{eq10a}
\end{equation}
\begin{equation}
	0<\underset{\Omega}{\rm essinf}\, F(\cdot,M).
	\tag{$10b$}\label{eq10b}\stepcounter{equation}
\end{equation}

Integrating \eqref{eq10a} and using \eqref{eq10b}, we produce the following weaker condition:
\begin{eqnarray*}
	& c_9x^\tau&\leq F(z,x)\ \mbox{for almost all}\ z\in\Omega,\ \mbox{and all}\ x\geq M, \\
	\Rightarrow & c_9x^{\tau-1}&\leq f(z,x)\ \mbox{for almost all}\ z\in\Omega,\ \mbox{and all}\ x\geq M\ \mbox{(see \eqref{eq10a})}.
\end{eqnarray*}

So, the AR-condition dictates that the perturbation $f(z,\cdot)$ has at least $(\tau-1)$-polynomial growth near $+\infty$. Here, instead of the AR-condition, we use the quasimonotonicity requirement on $e_\lambda(z,\cdot)$ stated in hypothesis $H(f)(iii)$. This hypothesis is a slightly more general version of a condition used by Li \& Yang \cite{19}. It is satisfied if, for example, we can find $M>0$ such that for almost all $z\in\Omega$ the function
\begin{equation*}
	x\mapsto\frac{\lambda\vartheta(x)+f(z,x)}{x^{p-1}}\ \mbox{is nondecreasing on}\ [M,+\infty).
\end{equation*}

Hence hypothesis $H(f)(iii)$ is less restrictive than the AR-condition and it also agrees with $(p-1)$-superlinear nonlinearities with ``slower" growth near $+\infty$.
For example, taking the singularity to be $cx^{-\beta}$ or, more generally, $\vartheta(x)=x^{-\beta}(1\pm\sin x)$, $\vartheta(x)=x^{-\beta} (c\pm\cos x)$ for $c>1$, then the following function satisfies hypotheses $H(f)$ but not the AR-condition (for the sake of simplicity we drop the $z$-dependence):
\begin{equation*}
	f(x) = x^{p-1}\ln(1+x)\ \mbox{if}\ x\geq0.
\end{equation*}

On the other hand, the common  superlinear nonlinearity
\begin{equation*}
	f(x)=x^{\tau-1}\ \mbox{if}\ x\geq0
\end{equation*}
satisfies both hypotheses $H(f)$ (with the above $\vartheta(\cdot)$) and the AR-condition.

We introduce the following two sets:
\begin{eqnarray*}
	&\mathcal{L}=\{\lambda>0:\ \mbox{problem \eqref{eqp} admits a positive solution}\},\\
	& S_\lambda= \mbox{the set of positive solutions of problem \eqref{eqp}}.
\end{eqnarray*}

Also, we define
\begin{equation*}
	\lambda^*=\sup\mathcal{L}.
\end{equation*}

We summarize our analysis of Problem \eqref{eqp} in the next theorem, which is the main result of the present paper.

\begin{theorem}\label{th21}
	If hypotheses $H(a),H(\vartheta),H(f)$ hold, then there exists $\lambda^*>0$ such that
	\begin{itemize}
		\item[(a)] for all $\lambda\in(0,\lambda^*)$ problem \eqref{eqp} has at least two positive solutions
		$$u_0,\hat{u}\in {\rm int}\, C_+,\ u_0\neq\hat{u};$$
		\item[(b)] for $\lambda=\lambda^*$ problem \eqref{eqp} has at least one positive solution $\hat{u}_{\lambda^*}\in {\rm int}\, C_+$;
		\item[(c)] for all $\lambda>\lambda^*$ problem \eqref{eqp} has no positive solutions;
		\item[(d)] for all $\lambda\in\left(0,\lambda^*\right]$ problem \eqref{eqp} has a smallest positive solution $u^*_\lambda\in {\rm int}\, C_+$ and the minimal solution map $\chi:\left(0,\lambda^*\right]\rightarrow C^1_0(\overline{\Omega})$ defined by $\chi(\lambda)=u^*_\lambda$ is strictly increasing and left continuous.
	\end{itemize}
\end{theorem}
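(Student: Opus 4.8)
\textbf{Step 1 (the set $\mathcal{L}$).} The plan is to first describe $\mathcal{L}$ as an interval and then extract the solutions by variational methods applied to functionals in which the singular term has been ``frozen'' below a sub-solution $\underline u_\lambda\in{\rm int}\,C_+$; the strong comparison principles (Propositions~\ref{prop6}, \ref{prop7}) and the weak comparison principle are used repeatedly to reinsert the original singularity and to compare solutions. Concretely, for each $\lambda>0$ I would first solve, by truncating $\vartheta$ at levels $\epsilon\downarrow0$, using the monotone operator theory behind Lemma~\ref{lem4} and Proposition~\ref{prop5}, and then passing to the limit with a uniform barrier near $\partial\Omega$ (Lieberman regularity and the nonlinear maximum principle), the auxiliary singular problem $-{\rm div}\,a(D\underline u_\lambda)=\lambda\vartheta(\underline u_\lambda)$ in $\Omega$, $\underline u_\lambda|_{\partial\Omega}=0$, obtaining $\underline u_\lambda\in{\rm int}\,C_+$, which — since $f\geq0$ on $\RR_+$ by $H(f)(iv)$ — is a sub-solution of \eqref{eqp}. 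Replacing $\vartheta(u)$ by $\vartheta(\max\{u,\underline u_\lambda\})\in L^\infty(\Omega)$ turns all energies below into $C^1$ functionals on $W^{1,p}_0(\Omega)$ (Proposition~\ref{prop5}). For small $\lambda>0$, combining this lower freeze with an upper truncation of $f$ produces a coercive $C^1$ functional (coercivity by Corollary~\ref{cor3}) whose global minimizer $u_0$ satisfies $\underline u_\lambda\leq u_0$ and, by a priori bounds that use the smallness of $\lambda$, stays below the truncation level, hence solves \eqref{eqp}; thus $\mathcal{L}\neq\emptyset$. For the interval property, if $\lambda\in\mathcal{L}$ with $u_\lambda\in S_\lambda$ and $0<\mu<\lambda$, then $u_\lambda$ is a super-solution of \eqref{eqp} at $\mu$, and minimizing the energy truncated between $\underline u_\mu$ and $u_\lambda$ gives a solution at $\mu$; hence $(0,\lambda^*)\subseteq\mathcal{L}$ with $\lambda^*=\sup\mathcal{L}$.

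\textbf{Step 2 (finiteness of $\lambda^*$ and two solutions for $\lambda<\lambda^*$).} The $(p-1)$-superlinearity of $f$ (via $H(f)(ii),(iv)$) together with the positivity of $\vartheta$ forces $\lambda^*<\infty$: for $\lambda$ large one gets $\lambda\vartheta(x)+f(z,x)\geq(\hat\lambda_1(p)+1)x^{p-1}$ for all $x>0$, so a hypothetical solution would be incompatible with the variational characterization \eqref{eq8} of $\hat\lambda_1(p)$ and the structural bounds of Lemma~\ref{lem2} (an eigenvalue/Picone-type comparison); in particular, part (c) follows. Now fix $\lambda\in(0,\lambda^*)$, choose $\mu\in(\lambda,\lambda^*)$ and $\bar u_\mu\in S_\mu$; then $\bar u_\mu$ is a \emph{strict} super-solution of \eqref{eqp} at $\lambda$. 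Minimizing the energy truncated between $\underline u_\lambda$ and $\bar u_\mu$ yields a first solution $u_0\in{\rm int}\,C_+$ with $\underline u_\lambda\leq u_0\leq\bar u_\mu$, and Propositions~\ref{prop6}--\ref{prop7} place $u_0$ in ${\rm int}_{C^1_0(\overline\Omega)}[\underline u_\lambda,\bar u_\mu]$. Hence $u_0$ is a local $C^1_0(\overline\Omega)$-minimizer, and therefore a local $W^{1,p}_0(\Omega)$-minimizer, of the functional $\hat\varphi_\lambda$ in which only the singularity is frozen below (at $\underline u_\lambda$), $f$ being left intact. If $K_{\hat\varphi_\lambda}$ is finite (otherwise there are infinitely many positive solutions and we are done), $u_0$ is a strict local minimizer; since $\hat\varphi_\lambda(t\hat u_1(p))\to-\infty$ as $t\to+\infty$ by $H(f)(ii)$ (the singular part growing only linearly), and $\hat\varphi_\lambda$ satisfies the C-condition — this is exactly where the quasimonotonicity hypothesis $H(f)(iii)$ replaces the AR-condition, in the spirit of Li \& Yang \cite{19} — Theorem~\ref{th1} produces a second critical point $\hat u\neq u_0$ of $\hat\varphi_\lambda$, necessarily with $\underline u_\lambda\leq\hat u$, hence a second positive solution of \eqref{eqp}; Lieberman regularity and the maximum principle give $\hat u\in{\rm int}\,C_+$.

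\textbf{Step 3 (the endpoint $\lambda^*$; minimal solutions and the map $\chi$).} Taking $\lambda_n\uparrow\lambda^*$ and $u_n\in S_{\lambda_n}$ (which may be chosen increasing), hypotheses $H(f)(ii),(iii)$ give a $W^{1,p}_0(\Omega)$-bound on $\{u_n\}$, and since $u_n\geq\underline u_{\lambda_1}$ controls the singular terms, the $(S)_+$-property (Lemma~\ref{lem4}) yields strong convergence to some $\hat u_{\lambda^*}\in S_{\lambda^*}$; thus $\lambda^*\in\mathcal{L}$ and parts (b),(c) are proved. For the minimal solution, $S_\lambda$ is downward directed: given $u_1,u_2\in S_\lambda$, minimizing the energy truncated between $\underline u_\lambda$ and $\min\{u_1,u_2\}$ produces $u\in S_\lambda$ with $u\leq\min\{u_1,u_2\}$; running a decreasing sequence along a maximal chain and combining monotone convergence with Lemma~\ref{lem4} and the global $C^{1,\alpha}$ bounds of Lieberman (compactness in $C^1_0(\overline\Omega)$) gives $u^*_\lambda=\inf S_\lambda\in S_\lambda\cap{\rm int}\,C_+$. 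Strict monotonicity of $\chi$: for $\mu<\lambda$, $u^*_\lambda$ is a super-solution of \eqref{eqp} at $\mu$ with reaction gap $(\lambda-\mu)\vartheta(u^*_\lambda)>0$, so after using minimality of $u^*_\mu$ to get $u^*_\mu\leq u^*_\lambda$, Proposition~\ref{prop6} gives $u^*_\lambda-u^*_\mu\in{\rm int}\,\hat C_+$; in particular $u^*_\mu\neq u^*_\lambda$. Left continuity: for $\lambda_n\uparrow\lambda$ in $(0,\lambda^*]$, the sequence $\{u^*_{\lambda_n}\}$ is increasing and bounded above by $u^*_\lambda$, hence bounded in $W^{1,p}_0(\Omega)$; Lemma~\ref{lem4} gives strong convergence, the Lieberman bounds upgrade it to convergence in $C^1_0(\overline\Omega)$ to some $\tilde u\in S_\lambda$ with $\tilde u\leq u^*_\lambda$, and minimality forces $\tilde u=u^*_\lambda$, so $\chi(\lambda_n)\to\chi(\lambda)$ in $C^1_0(\overline\Omega)$.

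\textbf{Main difficulty.} The crux, recurring throughout, is the incompatibility of the singular term with the direct variational method: one never works with the energy of \eqref{eqp} itself but always with a version in which the singularity has been frozen below a sub-solution $\underline u_\lambda\in{\rm int}\,C_+$, and one must then argue \emph{a posteriori}, through the strong and weak comparison principles (Propositions~\ref{prop6}, \ref{prop7}), that the critical points produced actually dominate $\underline u_\lambda$ and hence solve the original problem; the same domination is what keeps the singular terms under control in the weak-to-strong and limit-in-$\lambda$ passages. The second delicate point is verifying the Cerami condition without the AR-condition, which must be done through $H(f)(iii)$; and one must respect that only \emph{left} continuity of $\chi$ is claimed — the monotone argument above genuinely uses $\lambda_n\uparrow\lambda$ and does not adapt to $\lambda_n\downarrow\lambda$.
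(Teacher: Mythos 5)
Your overall architecture matches the paper's: solve the purely singular problem to get a lower barrier $\underline u_\lambda$, truncate between barriers to get coercive $C^1$ functionals, minimize, use the strong comparison principles to place minimizers in the $C^1_0$-interior of an order interval, invoke the mountain pass theorem (with the Cerami condition verified via $H(f)(iii)$), and pass to the limit for $\lambda^*$ using the quasimonotonicity. Two steps, however, are not just omissions of detail but places where your chosen route would not go through as described.

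First, your nonexistence argument for $\lambda>\lambda^*$ rests on a Picone-type comparison with $\hat\lambda_1(p)$: you claim that a solution of \eqref{eqp} with reaction $\geq(\hat\lambda_1(p)+1)x^{p-1}$ contradicts the Rayleigh quotient \eqref{eq8}. This is the standard argument for the $p$-Laplacian, but it uses the Picone identity $a(Dv)\cdot D\bigl(u^p/v^{p-1}\bigr)\leq |Du|^p$, which is specific to the $(p-1)$-homogeneous map $a(y)=|y|^{p-2}y$. Under $H(a)$ the operator is only pinched between $|y|^{s-1}$ and $|y|^{p-1}$ growth (Lemma~\ref{lem2}(b),(c)), and there is no Picone inequality in general; testing the equation with $u_\lambda$ or with $\hat u_1(p)^p/u_\lambda^{p-1}$ does not produce a contradiction. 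The paper's Proposition~\ref{prop12} circumvents this entirely: it shows $\lambda\vartheta(x)+f(z,x)>x^{p-1}$ forces, via the strong comparison principle (Proposition~\ref{prop6}) applied on an interior subdomain $\Omega_0$ to the constant $m_0+\epsilon$ (where $m_0=\min_{\overline\Omega_0}u_\lambda$) and to $u_\lambda$ itself, a strict inequality $u_\lambda>m_0+\epsilon$ on $\Omega_0$, contradicting minimality of $m_0$. You need that argument, not Picone. Second, for downward directedness of $S_\lambda$ you simply truncate between $\underline u_\lambda$ and $\tilde u:=\min\{u_1,u_2\}$ and assert the minimizer lands below $\tilde u$. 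But to compare the minimizer with $\tilde u$ via the truncation you first need $\tilde u$ to be an upper solution of \eqref{eqp}, and $\tilde u$ is a pointwise minimum of two solutions, not a solution. The paper establishes this (Proposition~\ref{prop18}) via the Lipschitz cut-off $\eta_\epsilon$, choosing test functions $\eta_\epsilon((u_1-u_2)^-)y$ and $(1-\eta_\epsilon((u_1-u_2)^-))y$ and passing to the limit $\epsilon\to0^+$ with Stampacchia's theorem, to show $\int_\Omega (a(D\tilde u),Dy)\,dz\geq\int_\Omega[\lambda\vartheta(\tilde u)+f(z,\tilde u)]y\,dz$ for $y\geq0$; without that step, your truncation argument does not control $u$ from above by $\tilde u$. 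The remaining parts of your sketch (nonemptiness of $\mathcal L$ via a small-ball or upper-truncation argument, the mountain-pass step with the $C^1_0$-versus-$W^{1,p}_0$ local minimizer passage, $\lambda^*\in\mathcal L$ via the boundedness from $H(f)(iii)$, and the monotonicity and left continuity of $\chi$) follow the paper's reasoning correctly.
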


\section{Positive solutions}

In this section we prove the bifurcation-type property described in Theorem \ref{th21}.

We first show that $\mathcal{L}\neq\emptyset$. To this end, let $\eta\in(0,1]$ and consider the following Dirichlet problem:
\begin{equation}\label{eq11}
	-{\rm div}\,a(Du(z))=\eta\ \mbox{in}\ \Omega,\quad u|_{\partial\Omega}=0.
\end{equation}

\begin{prop}\label{prop9}
	If hypotheses $H(a)(i)(ii)(iii)$ hold, then problem \eqref{eq11} has a unique solution $\tilde{u}_\eta\in {\rm int}\,C_+$, the map $\eta\mapsto\tilde{u}_\eta$ from $(0,1]$ into $C^1_0(\overline\Omega)$ is strictly increasing (that is, $0<\mu<\eta\leq1\Rightarrow\tilde{u}_\eta-\tilde{u}_\mu\in {\rm int}\,C_+$) and if $\eta\rightarrow0^+$, then $\tilde{u}_\eta\rightarrow0$ in $C^1_0(\overline\Omega)$.
\end{prop}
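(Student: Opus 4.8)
The plan is to treat \eqref{eq11} as the Euler--Lagrange equation of a coercive, strictly convex, weakly lower semicontinuous functional on $W^{1,p}_0(\Omega)$, obtain a unique weak solution by direct methods, upgrade it to $\mathrm{int}\,C_+$ via nonlinear regularity and the maximum principle, and finally read off monotonicity and the limit $\eta\to0^+$ from the variational characterization together with an a priori bound.

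First I would define $\psi_\eta:W^{1,p}_0(\Omega)\to\RR$ by
\[
\psi_\eta(u)=\int_\Omega G(Du)\,dz-\eta\int_\Omega u\,dz,
\]
where $G$ is the primitive of $a$ introduced before \eqref{eq2}. By Corollary \ref{cor3} we have $G(y)\ge\frac{c_1}{p(p-1)}|y|^p$, so $\psi_\eta(u)\ge\frac{c_1}{p(p-1)}\|u\|^p-\eta|\Omega|^{1/p'}\|u\|_p$, and Poincar\'e's inequality gives coercivity. Since $G$ is convex (indeed $G_0$ is strictly convex and strictly increasing) and continuous, $\psi_\eta$ is convex and sequentially weakly lower semicontinuous, hence attains its infimum at some $\tilde u_\eta\in W^{1,p}_0(\Omega)$. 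Strict monotonicity of $A$ (Lemma \ref{lem4}) forces the minimizer to be unique. Euler's equation reads $\langle A(\tilde u_\eta),h\rangle=\eta\int_\Omega h\,dz$ for all $h\in W^{1,p}_0(\Omega)$, i.e. $-\mathrm{div}\,a(D\tilde u_\eta)=\eta$ weakly. Testing with $h=-\tilde u_\eta^-$ and using Lemma \ref{lem2}(c) gives $\tilde u_\eta\ge0$, and since $\tilde u_\eta\ne0$ (the right-hand side is positive) we get $\tilde u_\eta\in C_+\setminus\{0\}$ by the regularity theory of Lieberman \cite{18} (the growth conditions in $H(a)(i)(ii)(iii)$ and \eqref{eq1} are exactly what is needed), so $\tilde u_\eta\in C^{1,\alpha}_0(\overline\Omega)$ for some $\alpha\in(0,1)$. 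Then the nonlinear maximum principle of Pucci--Serrin \cite{31} (or Vazquez \cite{34}) applied to $-\mathrm{div}\,a(D\tilde u_\eta)=\eta>0$ yields $\tilde u_\eta\in\mathrm{int}\,C_+$.

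For monotonicity, let $0<\mu<\eta\le1$. Subtracting the two equations,
\[
-\mathrm{div}\,\bigl(a(D\tilde u_\eta)-a(D\tilde u_\mu)\bigr)=\eta-\mu>0\quad\text{in }\Omega.
\]
Testing with $(\tilde u_\mu-\tilde u_\eta)^+\in W^{1,p}_0(\Omega)$ and invoking the strict monotonicity of $A$, one first gets $\tilde u_\mu\le\tilde u_\eta$ (the weak comparison principle, cf. Damascelli \cite{6}). Since $\eta-\mu$ is a positive constant, it certainly satisfies $0\prec\eta-\mu$ in the notation preceding Proposition \ref{prop7}; applying a strong comparison principle in the spirit of Proposition \ref{prop7} (with $\vartheta\equiv0$, $\hat\xi\equiv0$, $h_1=\mu$, $h_2=\eta$) --- or, more directly, the same Vazquez/Hopf argument used in the proof of Proposition \ref{prop6} applied to the linear operator $L$ from \eqref{eq5} built from $a(D\tilde u_\eta),a(D\tilde u_\mu)$ --- gives $\tilde u_\eta-\tilde u_\mu\in\mathrm{int}\,C_+$.

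Finally, for the limit as $\eta\to0^+$: from $\psi_\eta(\tilde u_\eta)\le\psi_\eta(0)=0$ and the coercivity estimate above we obtain $\frac{c_1}{p(p-1)}\|\tilde u_\eta\|^p\le\eta|\Omega|^{1/p'}\|\tilde u_\eta\|_p\le C\eta\|\tilde u_\eta\|$, hence $\|\tilde u_\eta\|^{p-1}\le C'\eta$ and so $\tilde u_\eta\to0$ in $W^{1,p}_0(\Omega)$. Then $-\mathrm{div}\,a(D\tilde u_\eta)=\eta\to0$ in $L^\infty(\Omega)$, and the global $C^{1,\alpha}$ a priori estimates of Lieberman \cite{18} (with a bound uniform in $\eta\in(0,1]$, since the right-hand sides are uniformly bounded by $1$) give a bound $\|\tilde u_\eta\|_{C^{1,\alpha}_0(\overline\Omega)}\le M$ for all $\eta\in(0,1]$. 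By the compact embedding $C^{1,\alpha}_0(\overline\Omega)\hookrightarrow C^1_0(\overline\Omega)$, every sequence $\eta_n\to0^+$ has a subsequence with $\tilde u_{\eta_n}\to v$ in $C^1_0(\overline\Omega)$; combined with $\tilde u_{\eta_n}\to0$ in $W^{1,p}_0(\Omega)$ we get $v=0$, and since the limit is independent of the subsequence, $\tilde u_\eta\to0$ in $C^1_0(\overline\Omega)$ as $\eta\to0^+$. The main obstacle is the strong comparison step yielding $\tilde u_\eta-\tilde u_\mu\in\mathrm{int}\,C_+$ rather than merely $\tilde u_\mu\le\tilde u_\eta$: this requires linearizing the difference $a(D\tilde u_\eta)-a(D\tilde u_\mu)$ as in \eqref{eq4}--\eqref{eq5}, checking the resulting coefficients $\tilde c_{k,i}$ are regular enough (which they are, since $\tilde u_\eta,\tilde u_\mu\in C^{1,\alpha}$ and $|D\tilde u_\eta|,|D\tilde u_\mu|$ stay away from $0$ on compacts where they are positive), and then applying Hopf's lemma at the boundary and the interior strong maximum principle.
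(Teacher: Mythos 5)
Your proof is correct and follows essentially the same route as the paper, with a few cosmetic variations worth noting. For existence you minimize the strictly convex coercive functional $\psi_\eta(u)=\int_\Omega G(Du)\,dz-\eta\int_\Omega u\,dz$, whereas the paper invokes surjectivity of $A$ as a coercive maximal monotone operator; since $A=\psi_\eta'+\eta$, these are interchangeable. For the strong ordering $\tilde u_\eta-\tilde u_\mu\in\mathrm{int}\,C_+$, the paper simply cites Proposition~3.4 of Gasinski--Papageorgiou~\cite{11}, while you propose to rerun the linearization of Proposition~\ref{prop6} (build $L$ from \eqref{eq4}--\eqref{eq5} for the pair $\tilde u_\mu\le\tilde u_\eta$, then apply Vazquez/Hopf); this is legitimate and self-contained, though it inherits the same delicacy already present in Proposition~\ref{prop6}, namely that $\tilde c_{k,i}\in W^{1,\infty}$ requires some care near points where the segment $D\tilde u_\mu+t\,D(\tilde u_\eta-\tilde u_\mu)$ passes through $0$ and $\nabla a$ may be unbounded (e.g.\ $p<2$); the paper does not elaborate on this either, so you are at least on par. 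For the limit $\eta\to0^+$ your explicit estimate $\|\tilde u_\eta\|^{p-1}\le C'\eta$ from $\psi_\eta(\tilde u_\eta)\le 0$ is a clean improvement over the paper's implicit boundedness argument; you should, however, insert the intermediate step that a uniform $L^\infty$ bound on $\tilde u_\eta$ (from Ladyzhenskaya--Uraltseva, given $|\eta|\le1$) is what licenses the uniform Lieberman $C^{1,\alpha}$ estimate, as the paper does explicitly. None of these affect correctness.
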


\begin{proof}
	The map $A:W^{1,p}_0(\Omega)\rightarrow W^{-1,p'}(\Omega)$ is maximal monotone (see Lemma 4) and coercive (see Lemma 2(c)), hence also surjective (see Corollary 3.2.31 of Gasinski \& Papageorgiou \cite[p. 319]{9}). Therefore we can find $\tilde{u}_\eta\in W^{1,p}_0(\Omega)\backslash\{0\}$ such that
	\begin{eqnarray*}
		&& A(\tilde{u}_\eta)=\eta \\
		&\Rightarrow & \langle A(\tilde{u}_\eta),h\rangle = \eta\int_\Omega hdz\ \mbox{for all}\ h\in W^{1,p}_0(\Omega).
	\end{eqnarray*}
	
	Choosing $h=-\tilde{u}^-_\eta\in W^{1,p}_0(\Omega)$ and using Lemma \ref{lem2}, we obtain
	\begin{eqnarray*}
		&&\frac{c_1}{p-1}||D\tilde{u}^-_\eta||^p_p\leq0,\\
		& \Rightarrow & \tilde{u}_\eta\geq0,\ \tilde{u}_\eta\neq0\ \mbox{(since $\eta>0$)}.
	\end{eqnarray*}
	
	We have
	\begin{equation}\label{eq12}
		-{\rm div}\,a(D\tilde{u}_\eta(z))=\eta\ \mbox{for almost all}\ z\in\Omega,\ \tilde{u}_\eta|_{\partial\Omega}=0.
	\end{equation}
	
	Theorem 7.1 of Ladyzhenskaya \& Uraltseva \cite[p. 286]{16}, implies that $\tilde{u}_\eta\in L^\infty(\Omega)$. Then the nonlinear regularity theory of Lieberman \cite[p. 320]{18} implies that
	\begin{equation*}
		\tilde{u}_\eta\in C_+\backslash\{0\}.
	\end{equation*}
	
	From \eqref{eq12} and the nonlinear strong maximum principle of Pucci \& Serrin \cite[pp. 111,120]{31}, we have
	\begin{equation*}
		\tilde{u}_\eta\in {\rm int}\,C_+.
	\end{equation*}
	
	Moreover, this solution is unique on account of the strict monotonicity of $A(\cdot)$.
	
	Next, suppose that $0<\mu<\eta\leq1$. We have
	\begin{eqnarray*}
		&&-{\rm div}\,a(D\tilde{u}_\mu(z)) = \mu<\eta = -{\rm div}\,a(D\tilde{u}_\eta(z))\ \mbox{for almost all}\ z\in\Omega, \\
		& \Rightarrow & \tilde{u}_\eta-\tilde{u}_\mu\in {\rm int}\,C_+\ \mbox{(see Gasinski \& Papageorgiou \cite[Proposition 3.4]{11})}.
	\end{eqnarray*}
	
	Finally, let $\eta_n\rightarrow0^+$ and let $\tilde{u}_n=\tilde{u}_{\eta_n}\in {\rm int}\,C_+$. Clearly, $\{\tilde{u}_n\}_{n\geq1}\subseteq W^{1,p}_0(\Omega)$ is bounded and from Ladyzhenskaya \& Uraltseva \cite[p. 286]{16}, we have
	\begin{equation*}
		||\tilde{u}_n||_\infty\leq c_{10}\ \mbox{for some}\ c_{10}>0,\ \mbox{all}\ n\in\NN.
	\end{equation*}
	
	Then it follows from Lieberman \cite{18} that there exist $\alpha\in (0,1)$ and $c_{11}>0$ such that
	\begin{equation*}
		\tilde{u}_n\in C^{1,\alpha}_0(\overline\Omega)\ \mbox{and}\ ||\tilde{u}_n||_{C^{1,\alpha}_0(\overline\Omega)}\leq c_{11}\ \mbox{for all}\ n\in\NN.
	\end{equation*}
	
	Exploiting the compact embedding of $C^{1,\alpha}_0(\overline\Omega)$ into $C^1_0(\overline\Omega)$ and recalling that $\{\tilde{u}_n\}_{n\geq1}$ is decreasing, we conclude that
	\begin{equation*}
		\tilde{u}_n\rightarrow0\ \mbox{in}\ C^1_0(\overline\Omega)
	\end{equation*}
	(note that $u=0$ is the only solution of $A(u)=0$).
\end{proof}

Fix $\lambda>0$. On account of Proposition \ref{prop9} and hypothesis $H(\theta)(ii)$, we can find small enough $\eta_0\in(0,1]$ such that
\begin{equation}\label{eq13}
	0<\frac{\eta}{\theta(\tilde{u}_\eta)}\leq\lambda\ \mbox{for all}\ \eta\in(0,\eta_0].
\end{equation}

So, we have
\begin{equation}\label{eq14}
	-{\rm div}\,a(D\tilde{u}_\eta(z))=\eta\leq\lambda\vartheta(\tilde{u}_\eta(z))\ \mbox{for almost all}\ z\in\Omega.
\end{equation}

We consider the following purely singular nonlinear, nonhomogeneous Dirichlet problem
\begin{equation}\label{eq15}
	-{\rm div}\,a(Du(z)) = \lambda\vartheta(u(z))\ \mbox{in}\ \Omega,\ u|_{\partial\Omega}=0,\ \lambda>0.
\end{equation}

\begin{prop}\label{prop10}
	If hypotheses $H(a)(i)(ii)(iii)$ hold and $\lambda>0$, then problem \eqref{eq15} admits a unique positive solution $\overline{u}_\lambda\in {\rm int}\,C_+$.
\end{prop}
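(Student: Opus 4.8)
The plan is to establish existence via an approximation/truncation scheme that bypasses the singularity at the origin, and then to obtain uniqueness from the monotonicity of the operator together with the fact that $\vartheta(\cdot)$ is nonincreasing. First I would produce a lower barrier: by \eqref{eq13}--\eqref{eq14}, for any $\eta\in(0,\eta_0]$ the function $\tilde{u}_\eta\in\operatorname{int}C_+$ from Proposition \ref{prop9} satisfies $-\operatorname{div}a(D\tilde{u}_\eta)\le\lambda\vartheta(\tilde{u}_\eta)$ weakly, so $\tilde{u}_\eta$ is a (weak) subsolution of \eqref{eq15}. Fix such an $\eta$ and write $\underline{u}=\tilde{u}_\eta$. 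Using $\underline{u}$ I would freeze the singular term: introduce the truncation
\begin{equation*}
	\hat\vartheta(z,x)=\begin{cases}\vartheta(\underline{u}(z))&\text{if }x<\underline{u}(z),\\ \vartheta(x)&\text{if }x\ge\underline{u}(z),\end{cases}
\end{equation*}
which is a Carath\'eodory function that is bounded on $\Omega\times(-\infty,t]$ for each $t$, since $\underline{u}\in\operatorname{int}C_+$ is bounded away from zero on compacta and $\vartheta$ is locally Lipschitz on $(0,\infty)$; its primitive $\hat\Theta(z,x)=\int_0^x\hat\vartheta(z,s)\,ds$ is well defined by Proposition \ref{prop5} (applied with $v_1=\underline{u}$ and any $v_2\ge\underline u$ in $\operatorname{int}C_+$, or a direct variant thereof). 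Consider the $C^1$ functional $\psi(u)=\int_\Omega G(Du)\,dz-\lambda\int_\Omega\hat\Theta(z,u)\,dz$ on $W^{1,p}_0(\Omega)$. By Corollary \ref{cor3}, $\int_\Omega G(Du)\,dz\ge\frac{c_1}{p(p-1)}\|u\|^p$, while $\hat\Theta(z,u)$ grows at most linearly in $u$ (the frozen tail $\vartheta(\underline u(z))$ is in $L^\infty$, and the genuine part $\int_{\underline u}^{u}\vartheta$ is controlled using $H(\theta)(i)$--$(ii)$, giving at worst a power $x^{1-\beta}$ with $1-\beta<1<p$); hence $\psi$ is coercive and, being sequentially weakly lower semicontinuous (weak lsc of the $G$-term by convexity, strong continuity of the $\hat\Theta$-term by compact Sobolev embedding), it attains a global minimizer $\overline u_\lambda\in W^{1,p}_0(\Omega)$.

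Next I would identify the minimizer as a solution of \eqref{eq15}. From $\psi'(\overline u_\lambda)=0$ we get $A(\overline u_\lambda)=\lambda\hat\vartheta(\cdot,\overline u_\lambda)$ in $W^{-1,p'}(\Omega)$. To upgrade this I must show $\overline u_\lambda\ge\underline u$, which forces $\hat\vartheta(\cdot,\overline u_\lambda)=\vartheta(\overline u_\lambda)$. Testing the equation for $\overline u_\lambda$ with $(\underline u-\overline u_\lambda)^+\in W^{1,p}_0(\Omega)$ and the subsolution inequality for $\underline u$ with the same test function, then subtracting, and using the monotonicity of $A$ together with $\hat\vartheta(z,\overline u_\lambda(z))=\vartheta(\underline u(z))\ge\vartheta(\underline u(z))$ on the set $\{\overline u_\lambda<\underline u\}$ (where also $\vartheta(\underline u)\le\vartheta(\overline u_\lambda)$ is not needed — the frozen value makes the comparison trivial), one obtains $\langle A(\overline u_\lambda)-A(\underline u),(\underline u-\overline u_\lambda)^+\rangle\ge 0$ and hence $\|D(\underline u-\overline u_\lambda)^+\|_p^p\le 0$, so $\underline u\le\overline u_\lambda$ a.e.; in particular $\overline u_\lambda\ge\underline u>0$ on $\Omega$, $\overline u_\lambda\ne 0$, and $\overline u_\lambda$ solves \eqref{eq15}. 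Once this is known, $\lambda\vartheta(\overline u_\lambda)\in L^\infty_{\mathrm{loc}}$ and, more to the point, since $0<\overline u_\lambda$ with $\overline u_\lambda\ge\underline u\in\operatorname{int}C_+$ the right-hand side is locally bounded; the global regularity $\overline u_\lambda\in C_+\setminus\{0\}$ follows as in the proof of Proposition \ref{prop9} — first $L^\infty$ via Ladyzhenskaya--Uraltseva (using $H(f)(i)$-type bound is not available here, but $\vartheta(\overline u_\lambda)\le\vartheta(\underline u)\in L^\infty$ by $H(\theta)(ii)$, since $\overline u_\lambda\ge\underline u$), then $C^{1,\alpha}$ via Lieberman \cite{18}. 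Finally the nonlinear strong maximum principle of Pucci--Serrin \cite{31} gives $\overline u_\lambda\in\operatorname{int}C_+$. One should also note the conclusion is independent of the auxiliary choice of $\eta\in(0,\eta_0]$, which will follow from uniqueness.

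For uniqueness, suppose $u,v\in\operatorname{int}C_+$ both solve \eqref{eq15}. Testing the two equations with $(u-v)^+$ and with $(v-u)^+$ respectively and subtracting, I get
\begin{equation*}
	\langle A(u)-A(v),(u-v)^+\rangle=\lambda\int_{\{u>v\}}\big(\vartheta(u)-\vartheta(v)\big)(u-v)\,dz\le 0,
\end{equation*}
because $\vartheta$ is nonincreasing ($H(\theta)(ii)$) so the integrand is $\le 0$, while the left-hand side is $\ge\frac{c_1}{p-1}\|D(u-v)^+\|_p^p\ge 0$ by strict monotonicity of $A$ (Lemma \ref{lem2}(c) applied to the difference, or Lemma \ref{lem4}); hence $(u-v)^+=0$, i.e. $u\le v$, and by symmetry $u=v$. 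I expect the main obstacle to be the bookkeeping around the singular term in the variational step — specifically verifying that $k_0$-type functional built from $\hat\Theta$ is genuinely $C^1$ on all of $W^{1,p}_0(\Omega)$ (this is exactly where Proposition \ref{prop5} is invoked, and one must check its hypotheses apply with the truncation at $\underline u$) and that the linear growth estimate on $\hat\Theta$ needed for coercivity is uniform; everything else is a standard sub/supersolution-plus-regularity argument. A clean alternative for existence, avoiding the functional entirely, is to solve the frozen problem $A(u)=\lambda\hat\vartheta(\cdot,u)$ directly by the surjectivity of the maximal monotone coercive operator $u\mapsto A(u)-\lambda\hat\vartheta(\cdot,u)$ (the perturbation is bounded and, after truncating from above as well if necessary, monotone-compact), and this is the route I would actually take if the $C^1$ verification proves cumbersome.
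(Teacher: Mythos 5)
Your proposal follows essentially the same route as the paper: truncate the singular term at the subsolution $\tilde{u}_\eta$ from Proposition~\ref{prop9}, build the $C^1$ functional via Proposition~\ref{prop5}, minimize, compare the critical point against $\tilde{u}_\eta$ to conclude $\overline{u}_\lambda\geq\tilde{u}_\eta$ and hence that the truncation is inactive, then apply nonlinear regularity and the strong maximum principle, and finally derive uniqueness from strict monotonicity of $A$ together with $\vartheta$ nonincreasing. Your variational step is a mild streamlining of the paper's: you invoke global coercivity and a global minimizer, whereas the paper restricts to a ball $\overline{B}_{\rho_0}$ and inserts the test function $t\hat{u}_1(q)$ to guarantee a nontrivial interior minimizer; since the comparison $\overline{u}_\lambda\geq\tilde{u}_\eta>0$ already forces nontriviality, your shortcut is legitimate (and indeed the paper's own estimate (21) already shows coercivity).

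Two small inaccuracies, neither fatal. First, the bound $\langle A(u)-A(v),(u-v)^+\rangle\geq\frac{c_1}{p-1}\|D(u-v)^+\|_p^p$ does not follow from Lemma~\ref{lem2}(c), which controls only $(a(y),y)_{\RR^N}$, not the difference $(a(y)-a(y'),y-y')_{\RR^N}$; for a general nonhomogeneous $a(\cdot)$ no such quantitative modulus of monotonicity is available. The correct conclusion still holds: the integral $\int_{\{u>v\}}(a(Du)-a(Dv),Du-Dv)_{\RR^N}\,dz$ is nonnegative by monotonicity and equals zero by the comparison, so strict monotonicity of $a(\cdot)$ forces $Du=Dv$ a.e.\ on $\{u>v\}$, whence $(u-v)^+=0$ — which is exactly what your parenthetical appeal to Lemma~\ref{lem4} delivers, and it is also how the paper argues. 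Second, for uniqueness you should test \emph{both} weak formulations with the \emph{same} function $(u-v)^+$ and subtract, rather than testing one with $(u-v)^+$ and the other with $(v-u)^+$; the displayed identity you then write is the correct one.
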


\begin{proof}
	Let $\eta\in(0,\eta_0]$ and let $\tilde{u}_\eta\in {\rm int}\,C_+$ be the unique positive solution of \eqref{eq11} (see Proposition \ref{prop9} and \eqref{eq13}). We introduce the Carath\'eodory function $k_\lambda(z,x)$ defined by
	\begin{equation}\label{eq16}
		k_\lambda(z,x)=\left\{
			\begin{array}{ll}
				\lambda\vartheta(\tilde{u}_\eta(z)) &\mbox{if}\ x\leq\tilde{u}_\eta(z)\\
				\lambda\vartheta(x) &\mbox{if}\ \tilde{u}_\eta(z)<x.
			\end{array}
		\right.
	\end{equation}
	
	We set $K_\lambda(z,x)=\int^x_0k_\lambda(z,s)ds$ and consider the functional $\Psi_\lambda: W^{1,p}_0(\Omega)\rightarrow\RR$ defined by
	\begin{equation*}
		\Psi_\lambda(u)=\int_\Omega G(Du)dz-\int_\Omega K_\lambda(z,u)dz\ \mbox{for all}\ u\in W^{1,p}_0(\Omega).
	\end{equation*}
	
	Proposition \ref{prop5} implies that $\Psi_\lambda\in C^1(W^{1,p}_0(\Omega),\RR)$. Hypothesis $H(a)(iv)$ implies that if $c_{12}>c^*$, then we can find $\delta>0$ such that
	\begin{equation}\label{eq17}
		G(y)\leq c_{12}|y|^q\ \mbox{for all}\ |y|\leq\delta.
	\end{equation}
	
	Recall that $\tilde{u}_\eta,\hat{u}_1(q)\in {\rm int}\,C_+$. So, we can find small enough $t\in(0,1)$ such that
	\begin{equation}\label{eq18}
		t|D\hat{u}_1(q)(z)|\leq\delta\ \mbox{for all}\ z\in\overline\Omega\ \mbox{and}\ t\hat{u}_1(q)\leq\tilde{u}_\eta\ \mbox{(see Proposition \ref{prop8})}.
	\end{equation}
	
	Then we have
	\begin{eqnarray}
		&\Psi_\lambda(t\hat{u}_1(q)) &=\int_\Omega G(D(t\hat{u}_1(q)))dz - \int_\Omega K_\lambda(z,t\hat{u}_1(q)dz) \nonumber \\
		&&\leq c_{12}t^q\hat\lambda_1(q)-\lambda t\int_\Omega\vartheta(\tilde{u}_\eta)\hat{u}_1(q)dz \nonumber \\
		&&\mbox{(see \eqref{eq16}, \eqref{eq17}, \eqref{eq18} and recall that $||\hat{u}_1(q)||_q=1$)} \nonumber \\
		&&\leq c_{12} t^q\hat\lambda_1(q)-\lambda tc_{13}\ \mbox{for some}\ c_{13}>0. \label{eq19}
	\end{eqnarray}
	
	Since $q>1$, choosing $t\in(0,1)$ even smaller if necessary, we infer from \eqref{eq19} that
	\begin{equation}\label{eq20}
		\Psi_\lambda(t\hat{u}_1(q))<0.
	\end{equation}
	
	Let $u\in W^{1,p}_0(\Omega)$. We have
	\begin{eqnarray}
		&\Psi_\lambda(u) &\geq\frac{c_1}{p(p-1)}||Du||^p_p - \int_{\{u\leq\tilde{u}_\eta\}}\lambda\vartheta(\tilde{u}_\eta)udz - \int_{\{\tilde{u}_\eta<u\}}\lambda[\theta(u)-\theta(\tilde{u}_\eta)]dz \nonumber \\
		&& \mbox{(see Corollary \ref{cor3} and \eqref{eq16})} \nonumber \\
		&& \geq \frac{c_1}{p(p-1)}||Du||^p_p -\int_{\{u\leq\tilde{u}_\eta\}}\lambda\vartheta(\tilde{u}_\eta)udz - \int_{\{\tilde{u}_\eta<u\}}\int^{u(z)}_{\tilde{u}_\eta(z)}\lambda\vartheta(s)dsdz \nonumber \\
		&& \geq \frac{c_1}{p(p-1)}||Du||^p_p - \int_\Omega\lambda\vartheta(\tilde{u}_\eta)udz\ \mbox{(see hypothesis $H(\theta)(ii)$)} \nonumber \\
		&& \geq \frac{c_1}{p(p-1)}||u||^p - \lambda c_{14}||u||\ \mbox{for some}\ c_{14}>0. \label{eq21}
	\end{eqnarray}
	
	We choose $\rho_0>0$ such that
	\begin{equation*}
		\frac{c_1}{p(p-1)}\rho^{p-1}_0>\lambda c_{14}.
	\end{equation*}
	
	Then from \eqref{eq21}, we have
	\begin{equation}\label{eq22}
		\Psi_\lambda|_{\partial B_{\rho_0}}>0,
	\end{equation}
	where $B_{\rho_0}=\{u\in W^{1,p}_0(\Omega):||u||<\rho_0\}$.
	
	Evidently, $\Psi_\lambda(\cdot)$ is sequentially weakly lower semicontinuous, while $\overline{B}_{\rho_0}\subseteq W^{1,p}_0(\Omega)$ is weakly compact. So, we can find $\overline{u}_\lambda\in\overline{B}_{\rho_0}$ such that
	\begin{equation}\label{eq23}
		\Psi_\lambda(\overline{u}_\lambda)=\inf\{\Psi_\lambda(u):u\in\overline{B}_{\rho_0}\}.
	\end{equation}
	
	On account of \eqref{eq20}, we have
	\begin{eqnarray}
		&&\Psi_\lambda(\overline{u}_\lambda)<0=\Psi_\lambda(0), \nonumber\\
		&\Rightarrow & \overline{u}_\lambda\neq0\ \mbox{and}\ ||\overline{u}_\lambda||<\rho_0\ \mbox{(see \eqref{eq22})}. \label{eq24}
	\end{eqnarray}
	
	Then \eqref{eq23} and \eqref{eq24} imply that
	\begin{eqnarray}
		&&\Psi'_\lambda(\overline{u}_\lambda)=0, \nonumber\\
		&\Rightarrow & \langle A(\overline{u}_\lambda),h\rangle=\int_\Omega k_\lambda(z,\overline{u}_\lambda)hdz\ \mbox{for all}\ h\in W^{1,p}_0(\Omega). \label{eq25}
	\end{eqnarray}
	
	In \eqref{eq25} we choose $h=(\tilde{u}_\eta-\overline{u}_\lambda)^+\in W^{1,p}_0(\Omega)$. Then
	\begin{eqnarray}
		& \langle A(\overline{u}_\lambda),(\tilde{u}_\eta-\overline{u}_\lambda)^+\rangle &= \int_\Omega\lambda\vartheta(\tilde{u}_\eta)(\tilde{u}_\eta-\overline{u}_\lambda)^+dz\ \mbox{(see \eqref{eq16})} \nonumber \\
		&& \geq\langle A(\tilde{u}_\eta),(\tilde{u}_\eta-\overline{u}_\lambda)^+\rangle \ \mbox{(see \eqref{eq14})},\nonumber \\
		&& \Rightarrow\tilde{u}_\eta\leq\overline{u}_\lambda\ \mbox{(see Lemma \ref{lem4})}. \label{eq26}
	\end{eqnarray}
	
	It follows from \eqref{eq16}, \eqref{eq25}, \eqref{eq26} that
	\begin{equation*}
		-{\rm div}\,a(D\overline{u}_\lambda(z))=\lambda\vartheta(\overline{u}_\lambda(z))\ \mbox{for almost all}\ z\in\Omega,\ \overline{u}_\lambda|_{\partial\Omega}\neq0.
	\end{equation*}
	
	As before, the nonlinear regularity theory and \eqref{eq26} imply that
	\begin{equation*}
		\overline{u}_\lambda\in {\rm int}\,C_+.
	\end{equation*}
	
	Now we show the uniqueness of this positive solution of problem \eqref{eq15}. So, suppose $\overline{v}_\lambda\in W^{1,p}_0(\Omega)$ is another such positive solution. Again, we have $\overline{v}_\lambda\in {\rm int}\,C_+$. Then
	\begin{eqnarray*}
		&& 0\leq\langle A(\overline{u}_\lambda)-A(\overline{v}_\lambda),(\overline{u}_\lambda-\overline{v}_\lambda)^+\rangle = \lambda\int_\Omega[\vartheta(\overline{u}_\lambda)-\vartheta(\overline{v}_\lambda)](\overline{u}_\lambda-\overline{v}_\lambda)^+dz\leq0\\
		&& \mbox{(see hypothesis $H(\theta)(ii)$)} \\
		&\Rightarrow & \overline{u}_\lambda = \overline{v}_\lambda\ \mbox{(see Lemma \ref{lem4})}.
	\end{eqnarray*}
	
	Therefore $\overline{u}_\lambda\in {\rm int}\,C_+$ is the unique positive solution.
\end{proof}

\begin{remark}
	Existence results for purely singular problems can also be found in the papers of Lair \& Shaker \cite{17}, Sun, Wu \& Long \cite{32} (semilinear problems with $a(y)=y$) and by Giacomoni, Schindler \& Taka\v{c} \cite{13}, Papageorgiou \& Smyrlis \cite{28} (nonlinear problems with $a(y)=|y|^{p-2}y$). In all the aforementioned works the singular term has the particular form $u^{-\beta}$ with $\beta\in(0,1)$. In Lair \& Shaker \cite{17} the proof is based on a monotone iterative process which makes use of the linearity of the differential operator. In Sun, Wu \& Long \cite{32} and Giacomoni, Schindler \& Taka\v{c} \cite{13}, the approach is the same and is based on an argument which shows that the energy functional is G\^ateaux differentiable at points in ${\rm int}\,C_+$. Papageorgiou  \& Smyrlis \cite{28} have a proof based on approximate equations which eliminate the singularity at $x=0$. Our approach in this paper is different from the ones in the aforementioned papers and we believe that it is more straightforward and accomodates easily both the more general differential operator and the more general singular term.
\end{remark}

Now we are ready to prove the nonemptiness of the set $\mathcal{L}$.

\begin{prop}\label{prop11}
	If hypotheses $H(a),H(\theta),H(f)$ hold, then $\mathcal{L}\neq\emptyset$ and $S_\lambda\subseteq {\rm int}\,C_+$.
\end{prop}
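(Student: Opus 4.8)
The assertion splits into two independent claims, which I would establish in turn.

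\textbf{Step 1: $\mathcal{L}\neq\emptyset$.} Fix $\lambda>0$ and let $\overline u_\lambda\in{\rm int}\,C_+$ be the unique positive solution of the purely singular problem \eqref{eq15} produced by Proposition \ref{prop10}. Since $H(f)(iv)$ forces $f(z,x)>0$ for every $x>0$, the function $\overline u_\lambda$ is a (weak) subsolution of \eqref{eqp}. The idea is to construct a solution of \eqref{eqp} lying above $\overline u_\lambda$. To that end I would first remove the singularity by truncating the reaction from below at $\overline u_\lambda$, namely
\[
\hat g_\lambda(z,x)=\lambda\,\vartheta\bigl(\max\{\overline u_\lambda(z),x\}\bigr)+f\bigl(z,\max\{\overline u_\lambda(z),x\}\bigr),
\]
a Carath\'eodory function in which $\vartheta(\cdot)$ is only ever evaluated away from $x=0$. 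Writing $\hat G_\lambda(z,\cdot)$ for its primitive and invoking the Hardy-type estimate behind Proposition \ref{prop5} (this is precisely where $\beta\in(0,1)$ enters), the functional $\hat\varphi_\lambda(u)=\int_\Omega G(Du)\,dz-\int_\Omega\hat G_\lambda(z,u)\,dz$ is of class $C^1$ on $W^{1,p}_0(\Omega)$, and its critical points that dominate $\overline u_\lambda$ solve \eqref{eqp}.

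\textbf{Step 2: a local minimizer of $\hat\varphi_\lambda$ for small $\lambda$.} Because $f(z,\cdot)$ is $(p-1)$-superlinear ($H(f)(ii)$), $\hat\varphi_\lambda$ is not coercive, so rather than minimizing globally I would look for a \emph{local} minimizer on a small ball. Using Corollary \ref{cor3}, the growth of $\theta$ from $H(\theta)(i)$, and the inequality $F(z,x)\le\tfrac{\epsilon}{p}x^p+c_\epsilon x^r$ (valid for all $x\ge0$) coming from $H(f)(i),(iv)$, one arrives at $\hat\varphi_\lambda(u)\ge\bigl(\tfrac{c_1}{p(p-1)}-\tfrac{\epsilon}{p\hat\lambda_1(p)}-c\lambda\bigr)||u||^p-c||u||^r-C_\lambda$, where $C_\lambda\to0$ as $\lambda\to0^+$ (the map $\lambda\mapsto\overline u_\lambda$ satisfying $\overline u_\lambda\to0$ in $C^1_0(\overline\Omega)$, analogously to Proposition \ref{prop9}). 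Fixing first $\epsilon$, then $\rho_0>0$, then $\lambda$ small, this yields $\hat\varphi_\lambda|_{\partial B_{\rho_0}}\ge m_{\rho_0}>0$. On the other hand, using $H(a)(iv)$ (so that $G(y)\le c_{12}|y|^q$ for $|y|$ small) together with the eigenfunction $\hat u_1(q)\in{\rm int}\,C_+$ and Proposition \ref{prop8} (which allows $t\hat u_1(q)\le\overline u_\lambda$ for $t>0$ small), a direct computation as in \eqref{eq19} gives $\hat\varphi_\lambda(t\hat u_1(q))\le c_{12}\hat\lambda_1(q)\,t^q-\lambda c_{13}\,t<0$ for $t$ small, since $q>1$. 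Hence $\inf_{\overline B_{\rho_0}}\hat\varphi_\lambda<0<m_{\rho_0}$, and by weak lower semicontinuity and reflexivity this infimum is attained at some $u_0$ with $||u_0||<\rho_0$, $u_0\neq0$, $\hat\varphi_\lambda'(u_0)=0$.

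\textbf{Step 3: solution and regularity; and $S_\lambda\subseteq{\rm int}\,C_+$.} Testing $\hat\varphi_\lambda'(u_0)=0$ with $(\overline u_\lambda-u_0)^+\in W^{1,p}_0(\Omega)$, using \eqref{eq15}, $f\ge0$ on $\RR_+$ and the strict monotonicity of $A$ (Lemma \ref{lem4}) exactly as in \eqref{eq26}, yields $\overline u_\lambda\le u_0$; by the definition of $\hat g_\lambda$, $u_0$ is then a positive solution of \eqref{eqp}, i.e. $\lambda\in\mathcal{L}$. Since the right-hand side $\lambda\vartheta(u_0)+f(z,u_0)$ is bounded above by $\lambda\vartheta(\overline u_\lambda)+a(z)(1+u_0^{r-1})$ with $\vartheta(\overline u_\lambda)$ in a good Lebesgue space (again by $\beta<1$) and $r<p^*$, an $L^\infty$-bound via Ladyzhenskaya \& Uraltseva \cite{16}, then the regularity theory of Lieberman \cite{18}, then the nonlinear maximum principle of Pucci \& Serrin \cite{31}, give $u_0\in{\rm int}\,C_+$. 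For $S_\lambda\subseteq{\rm int}\,C_+$ (any $\lambda$): an arbitrary $u\in S_\lambda$ is a supersolution of \eqref{eq15} because $f\ge0$, so testing the two equations with $(\overline u_\lambda-u)^+$ and using that $\vartheta$ is nonincreasing together with the strict monotonicity of $A$ gives $\overline u_\lambda\le u$; the same regularity chain then places $u$ in ${\rm int}\,C_+$.

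\textbf{Main obstacle.} The crux is Step 2: one must simultaneously handle the singular term — which precludes a direct variational formulation and is tamed only after the lower truncation, via Proposition \ref{prop5} and $\beta<1$ — and the superlinear perturbation, which destroys global coercivity and is controlled only on a small ball, using $H(f)(iv)$ to make $F$ be $o(x^p)$ at $0$ and $H(a)(iv)$ to make $G$ be $q$-subhomogeneous at $0$, all while keeping the ``frozen'' boundary contributions $C_\lambda$ negligible by restricting to small $\lambda>0$. The comparison $\overline u_\lambda\le u_0$, the nonlinear regularity, and the inclusion $S_\lambda\subseteq{\rm int}\,C_+$ are then routine.
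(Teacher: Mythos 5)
Your argument reproduces the paper's strategy almost step for step: start from the purely singular solution $\overline u_\lambda\in\mathrm{int}\,C_+$ of Proposition \ref{prop10}, truncate the reaction from below at $\overline u_\lambda$ to remove the singularity and obtain a $C^1$ energy functional (Proposition \ref{prop5}), use $H(a)(iv)$ together with the $q$-eigenfunction $\hat u_1(q)$ and Proposition \ref{prop8} to make the infimum on a small ball negative while the boundary values stay positive for $\lambda$ small, recover $\overline u_\lambda\le u_0$ by testing with $(\overline u_\lambda-u_0)^+$ and the $(S)_+$/monotonicity of $A$, and finish with the $L^\infty$--Lieberman--Pucci--Serrin regularity chain. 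The only structural deviation is cosmetic: you freeze both $\vartheta$ and $f$ below $\overline u_\lambda$, while the paper's truncation \eqref{eq27} freezes only $\vartheta$ and keeps $f(z,x)$ intact; this changes nothing in the comparison step since $f\ge 0$.

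One soft spot worth flagging: in Step 2 you dispose of the constant $C_\lambda$ by asserting that $\overline u_\lambda\to 0$ in $C^1_0(\overline\Omega)$ as $\lambda\to 0^+$, ``analogously to Proposition \ref{prop9}.'' But Proposition \ref{prop9} concerns the auxiliary family $\tilde u_\eta$ solving $-\mathrm{div}\,a(Du)=\eta$, not the singular problem \eqref{eq15}; the decay of $\overline u_\lambda$ is nowhere established in the paper and, although plausible (one could argue via the monotonicity of $\lambda\mapsto\overline u_\lambda$ from Proposition \ref{prop14}, the uniform $C^{1,\alpha}$ bounds, and the fact that $0$ is the only solution of $A(u)=0$), it would need its own proof. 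The paper avoids needing this limit: it keeps the singular contribution in the explicit linear form $\lambda c_{15}\|u\|$ (via $\vartheta$ nonincreasing and the Hardy-type estimate behind Proposition \ref{prop5}), fixes the radius $\rho$ from the $p$- and $r$-power terms alone, and only then shrinks $\lambda$, so the $\lambda$-dependence is tracked through a multiplicative factor rather than through a limit of $\overline u_\lambda$. Your sketch would be tightened by adopting that bookkeeping, or by supplying the missing $\overline u_\lambda\to 0$ argument.
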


\begin{proof}
	Let $\overline{u}_\lambda\in {\rm int}\,C_+$ be the unique positive solution of problem \eqref{eq15} produced in Proposition \ref{prop10}. We introduce the following truncation of the reaction in problem \eqref{eqp} $(\lambda>0)$
	\begin{equation}\label{eq27}
		\tau_\lambda(z,x)=\left\{
			\begin{array}{ll}
				\lambda\vartheta(\overline{u}_\lambda(z))+f(z,x) &\mbox{if}\ x\leq\overline{u}_\lambda(z) \\
				\lambda\vartheta(x)+f(z,x) &\mbox{if}\ \overline{u}_\lambda(z)<x.
			\end{array}
		\right.
	\end{equation}
	
	This is a Carath\'eodory function. Let $T_\lambda(z,x)=\int^x_0\tau_\lambda(z,s)ds$ and consider the functional $\varphi_\lambda: W^{1,p}_0(\Omega)\rightarrow\RR$ defined by
	\begin{equation*}
		\varphi_\lambda(u)=\int_\Omega G(Du)dz - \int_\Omega T_\lambda(z,u)dz\ \mbox{for all}\ u\in W^{1,p}_0(\Omega).
	\end{equation*}
	
	Using Proposition \ref{prop5}, we see that $\varphi_\lambda\in C^1(W^{1,p}_0(\Omega),\RR)$. We have
	\begin{eqnarray}
		&\varphi_\lambda(u) &\geq\frac{c_1}{p(p-1)}||Du||^p_p - \int_{\{u\leq\overline{u}_\lambda\}}\lambda\vartheta(\overline{u}_\lambda)udz - \int_{\{\overline{u}_\lambda<u\}}\lambda\int^{u(z)}_{\overline{u}_\lambda(z)}\vartheta(s)dsdz- \nonumber \\
		&&\int_\Omega F(z,u)dz\ \mbox{for all}\ u\in W^{1,p}_0(\Omega)\ \mbox{(see Corollary \ref{cor3} and \eqref{eq27})} \nonumber\\
		&&\geq\frac{c_1}{p(p-1)}||Du||^p_p - \int_\Omega\lambda\vartheta(\overline{u}_\lambda)udz - \int_\Omega F(z,u)dz \nonumber\\
		&&\mbox{for all}\ u\in W^{1,p}_0(\Omega)\ \mbox{(see hypothesis $H(\theta)(ii)$)} \nonumber\\
		&&\geq\frac{c_1}{p(p-1)}||u||^p-\lambda c_{15}||u||-\int_\Omega F(z,u)dz \label{eq28}\\
		&&\mbox{for all}\ u\in W^{1,p}_0(\Omega),\ \mbox{and some}\ c_{15}>0. \nonumber
	\end{eqnarray}
	
	Hypotheses $H(f)(i)(iv)$ imply that given $\epsilon>0$, we can find $c_{16}=c_{16}(\epsilon)>0$ such that
	\begin{equation*}
		F(z,x)\leq \frac{\epsilon}{p}x^p+c_{16}x^r\ \mbox{for almost all}\ z\in\Omega,\ \mbox{and all}\ x\geq0.
	\end{equation*}
	
	Using this growth estimate in \eqref{eq28} and choosing $\epsilon>0$ small, we obtain
	\begin{eqnarray}\label{eq29}
		\varphi_\lambda(u)\geq c_{17}||u||^p-c_{18}||u||^r -\lambda c_{15}||u||\\
		\mbox{for all}\ u\in W^{1,p}_0(\Omega),\ \mbox{and some}\ c_{17},c_{18}>0. \nonumber
	\end{eqnarray}
	
	We first  choose $0<\rho<\left(\frac{c_{17}}{c_{18}}\right)^\frac{1}{r-p}$. Let $\tilde\xi=c_{17}\rho^p-c_{18}\rho^r>0$. We choose $\lambda_0>0$ small such that
	\begin{equation*}
		\lambda c_{15}\rho<\tilde\xi\ \mbox{for all}\ \lambda\in(0,\lambda_0).
	\end{equation*}
	
	Then from \eqref{eq29} we have
	\begin{equation}\label{eq30}
		\varphi_\lambda|_{\partial B_\rho}>0\ \mbox{for all}\ \lambda\in(0,\lambda_0)
	\end{equation}
	(recall that $B_\rho=\{u\in W^{1,p}_0(\Omega):||u||<\rho\}$).
	
	Let $\delta>0$ be as in \eqref{eq17}. We choose $t\in(0,1)$ so small that
	\begin{equation}\label{eq31}
		t|D\hat{u}_1(q)(z)|\leq\delta\ \mbox{for all}\ z\in\overline\Omega\ \mbox{and}\ t\hat{u}_1(q)\leq\overline{u}_\lambda
	\end{equation}
	(recall that $\hat{u}_1(q),\overline{u}_\lambda\in {\rm int}\,C_+$ and use Proposition \ref{prop8}). We have
	\begin{eqnarray*}
		&\varphi_\lambda(t\hat{u}_1(q)) &\leq t^q\hat{\lambda}_1(q) - t\int_\Omega\lambda\vartheta(\overline{u}_\lambda)\hat{u}_1(q)dz - \int_\Omega F(z,t\hat{u}_1(q))dz\\
		&&\mbox{(see \eqref{eq17}, \eqref{eq27}, \eqref{eq31} and recall that $||\hat{u}_1(q)||_q=1$)}\\
		&&\leq t^q\hat\lambda_1(q)-\lambda tc_{19}\ \mbox{for some}\ c_{19}>0\ \mbox{(recall that $F\geq0$)}.
	\end{eqnarray*}
	
	Since $q>1$, choosing $t\in(0,1)$ even smaller if necessary, we can infer that
	\begin{equation}\label{eq32}
		\varphi_\lambda(t\hat{u}_1(q))<0=\varphi_\lambda(0).
	\end{equation}
	
	The functional $\varphi_\lambda$ is sequentially weakly lower semicontinuous, whereas $\overline{B}_\rho\subseteq W^{1,p}_0(\Omega)$ is $w$-compact. So, we can find $u_\lambda\in W^{1,p}_0(\Omega)$ such that
	\begin{equation*}
		\varphi_\lambda(u_\lambda)=\inf\{\varphi_\lambda(u):u\in W^{1,p}_0(\Omega)\}.
	\end{equation*}
	
	From \eqref{eq30} and \eqref{eq32} it follows that
	\begin{eqnarray}
		&& u_\lambda\neq0\ \mbox{and}\ ||u_\lambda||<\rho, \nonumber \\
		&\Rightarrow & \varphi'_\lambda(u_\lambda)=0, \nonumber \\
		&\Rightarrow & \langle A(u_\lambda),h\rangle=\int_\Omega \tau_\lambda(z,u_\lambda)hdz\ \mbox{for all}\ h\in W^{1,p}_0(\Omega). \label{eq33}
	\end{eqnarray}
	
	In \eqref{eq33} we choose $h=(\overline{u}_\lambda-u)^+\in W^{1,p}_0(\Omega)$. Then
	\begin{eqnarray}
		&\langle A(u_\lambda),(\overline{u}_\lambda-u_\lambda)^+\rangle &=\int_\Omega[\lambda\vartheta(\overline{u}_\lambda)+f(z,u_\lambda)](\overline{u}_\lambda-u_\lambda)^+dz  \mbox{(see (\eqref{eq27}))}\nonumber\\
		&&\geq\int_\Omega\lambda\vartheta(\overline{u}_\lambda)(\overline{u}_\lambda-u_\lambda)^+dz\ \mbox{(since $f\geq0$)} \nonumber \\
		&&=\langle A(\overline{u}_\lambda),(\overline{u}_\lambda-u_\lambda)^+\rangle, \nonumber \\
		&& \Rightarrow\overline{u}_\lambda\leq u_\lambda\ \mbox{(see Lemma \ref{lem4})}. \label{eq34}
	\end{eqnarray}
	
	Then it follows from \eqref{eq27}, \eqref{eq33} and \eqref{eq34} that
	\begin{eqnarray*}
		&&-{\rm div}\,a(Du_\lambda(z))=\lambda\vartheta(u_\lambda(z)) + f(z,u_\lambda(z))\ \mbox{for almost all}\ z\in\Omega, \ u_\lambda|_{\partial\Omega}=0,\\
		&\Rightarrow & u_\lambda\in {\rm int}\,C_+\ \mbox{(by the nonlinear regularity theory and \eqref{eq34})}.
	\end{eqnarray*}
	
	So, we conclude that
	\begin{equation*}
		(0,\lambda_0)\subseteq\mathcal{L}\neq\emptyset\ \mbox{and}\ S_\lambda\subseteq {\rm int}\,C_+.
	\end{equation*}
The proof is now complete.
\end{proof}

\begin{prop}\label{prop12}
	If hypotheses $H(a),H(\vartheta),H(f)$ hold, then $\lambda^*<+\infty$.
\end{prop}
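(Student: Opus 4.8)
The plan is to show that $\mathcal{L}$ is bounded above by producing, for $\lambda$ large, an obstruction to the existence of any positive solution. The natural obstruction comes from comparing the reaction against the first eigenvalue of a suitable $p$-Laplacian-type problem. First I would use hypotheses $H(f)(ii)$ and $H(f)(iv)$: since $f(z,\cdot)$ is $(p-1)$-superlinear near $+\infty$ and $f(z,x)/x^{p-1}\to 0$ as $x\to 0^+$, and since $\vartheta(x)>0$ with $\vartheta$ singular at $0$, the map $x\mapsto \lambda\vartheta(x)+f(z,x)$ stays uniformly bounded below by a positive constant on $(0,+\infty)$ once $\lambda$ is not too small, and more importantly we can arrange, for $\lambda$ large enough, a lower bound of the form
\begin{equation*}
	\lambda\vartheta(x)+f(z,x)\ \ge\ \tilde\lambda\, x^{p-1}\ -\ c\quad\text{for almost all }z\in\Omega,\ \text{all }x\ge 0,
\end{equation*}
with $\tilde\lambda>\hat\lambda_1(p)$ (using Lemma \ref{lem2}(c) to control the operator from above by the $p$-Laplacian up to a constant, via the growth bound $c_1 t^{p-1}\le l(t)$ in \eqref{eq1}). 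Actually it is cleaner to avoid the constant: because $f(z,\cdot)\ge \tilde\eta_s$ on $[s,+\infty)$ (hypothesis $H(f)(iv)$) and because near $0$ the singular term $\lambda\vartheta(x)$ dominates any multiple of $x^{p-1}$, one gets, for $\lambda$ sufficiently large, a genuine pointwise inequality $\lambda\vartheta(x)+f(z,x)\ge \tilde\lambda x^{p-1}$ on all of $(0,+\infty)$ with $\tilde\lambda>\hat\lambda_1(p)$.

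Next, suppose for contradiction that such a large $\lambda$ belongs to $\mathcal{L}$, and let $u\in S_\lambda\subseteq\operatorname{int}C_+$ (Proposition \ref{prop11}). Testing the weak formulation
\begin{equation*}
	\langle A(u),h\rangle=\int_\Omega\big[\lambda\vartheta(u)+f(z,u)\big]h\,dz
\end{equation*}
with $h=\hat u_1(p)\in\operatorname{int}C_+$ (or, symmetrically, testing the eigenvalue equation for $\hat u_1(p)$ with $u$ — one must be slightly careful because $A$ is not the $p$-Laplacian), and using Lemma \ref{lem2}(c) together with \eqref{eq2} and Corollary \ref{cor3}, one compares $\int_\Omega (a(Du),D\hat u_1(p))\,dz$ against $\tilde\lambda\int_\Omega u^{p-1}\hat u_1(p)\,dz$. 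A more robust route, which I expect is what the authors use: pick $t>0$ small enough (via Proposition \ref{prop8}, since $u\in\operatorname{int}C_+$) so that $t\hat u_1(p)\le u$, and then show $t\hat u_1(p)$ and $u$ are ordered sub/supersolutions of an auxiliary problem whose reaction is $\tilde\lambda x^{p-1}$; running the comparison and exploiting $\tilde\lambda>\hat\lambda_1(p)$ forces a contradiction, because the auxiliary problem $-\operatorname{div}a(Dw)=\tilde\lambda w^{p-1}$ has no positive solution when $\tilde\lambda$ exceeds the relevant threshold (one can see this by testing with $\hat u_1(p)$ and using the variational characterization \eqref{eq8}).

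The main obstacle is the nonhomogeneity of $a(\cdot)$: unlike the pure $p$-Laplacian case, $\langle A(u),h\rangle$ is not proportional to anything clean, so the eigenvalue comparison has to be set up through the two-sided bound $c_1 t^{p-1}\le l(t)\le c_2(t^{s-1}+t^{p-1})$ from \eqref{eq1} and Lemma \ref{lem2}(c), which only gives $\langle A(u),u\rangle\ge \frac{c_1}{p-1}\|Du\|_p^p$ — a lower bound, the wrong direction for absorbing the reaction. The trick is therefore to test with a fixed eigenfunction rather than with $u$ itself, so that the left-hand side $\int_\Omega(a(Du),D\hat u_1(p))\,dz$ can be bounded \emph{above} using Lemma \ref{lem2}(b) and the fact that $\hat u_1(p)$ and $u$ are both controlled, while the right-hand side grows like $\tilde\lambda$; letting $\tilde\lambda\to\infty$ (i.e.\ $\lambda\to\infty$) yields the contradiction and hence $\lambda^*=\sup\mathcal{L}<+\infty$. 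I would also remark that once $\lambda^*<+\infty$ is known, combined with $(0,\lambda_0)\subseteq\mathcal{L}$ from Proposition \ref{prop11} and the obvious downward closedness of $\mathcal{L}$ (a solution at $\lambda$ yields a supersolution at any $\mu<\lambda$), $\mathcal{L}$ is an interval.
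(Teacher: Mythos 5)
Your proposed route is genuinely different from the paper's and, as you yourself half-suspect, it does not close cleanly. The paper does not use any eigenvalue comparison at all. It derives the pointwise inequality $\tilde\lambda\vartheta(x)+f(z,x)\geq x^{p-1}$ for all $x\geq 0$ (with $\tilde\lambda=M^{p-1}/\vartheta(M)$, $M$ chosen so that $f(z,x)\geq x^{p-1}$ for $x\geq M$), and then, assuming $\lambda>\tilde\lambda$ admits a solution $u_\lambda\in{\rm int}\,C_+$, runs a \emph{local} comparison: on a compactly contained subdomain $\Omega_0$, set $m_0=\min_{\overline\Omega_0}u_\lambda>0$ and observe that the constant $m_0^\epsilon=m_0+\epsilon$ is a strict subsolution of the shifted equation $-\mathrm{div}\,a(Dw)-\lambda\vartheta(w)+\hat\xi_\rho w^{p-1}=\cdots$ (here $H(f)(v)$ and $\lambda>\tilde\lambda$ give the strict inequality, and crucially $-\mathrm{div}\,a(Dm_0^\epsilon)=0$ because the gradient of a constant vanishes). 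Proposition \ref{prop6} then forces $u_\lambda-m_0^\epsilon\in{\rm int}\,\hat C_+(\Omega_0)$, contradicting the definition of $m_0$. Nonhomogeneity of $a$ is a non-issue here precisely because a constant has zero gradient.

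The gap in your proposal is the one you flag as the ``main obstacle'' but then do not actually overcome. Testing the weak formulation with the fixed eigenfunction $\hat u_1(p)$ and hoping the right-hand side ``grows like $\tilde\lambda$'' while the left-hand side stays controlled fails: the left-hand side $\int_\Omega(a(Du_\lambda),D\hat u_1(p))\,dz$ depends on $u_\lambda$, and you have no a priori bound on $\|Du_\lambda\|_p$ as $\lambda\to\infty$ (indeed one expects solutions to grow with $\lambda$). Lemma \ref{lem2}(b) only converts this into a bound in terms of $\|Du_\lambda\|_p^{p-1}$, which is exactly what you cannot control. The sub/supersolution variant runs into a parallel problem: there is no clean first-eigenvalue theory (no homogeneity, no simple Rayleigh quotient characterization) for $-\mathrm{div}\,a(\cdot)$, so the statement ``$-\mathrm{div}\,a(Dw)=\tilde\lambda w^{p-1}$ has no positive solution when $\tilde\lambda$ exceeds the relevant threshold'' is not available off the shelf, and $\hat u_1(p)$ is not an eigenfunction of this operator. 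The correct fix is to abandon the global/eigenvalue viewpoint and argue locally with a constant test function, as the paper does; that sidesteps both the nonhomogeneity of $a$ and the lack of a priori bounds on $u_\lambda$.
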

\begin{proof}
	On account of hypotheses $H(f)(ii),(iii)$, we have
	$$\lim\limits_{x\rightarrow+\infty}\frac{f(z,x)}{x^{p-1}}=+\infty\ \mbox{uniformly for almost all}\ z\in\Omega.$$

Therefore we can find $M>1$ such that
$$f(z,x)\geq x^{p-1}\ \mbox{for almost all}\ z\in\Omega\ \mbox{and all}\ x\geq M.$$

Let $\tilde{\lambda}=\frac{M^{p-1}}{\vartheta(M)}$. Then
\begin{eqnarray}\label{eq35}
	&&\tilde{\lambda}\vartheta(x)\geq M^{p-1}\geq x^{p-1}\ \mbox{for a.a.}\ x\in\left(0,M\right]\ (\mbox{see hypothesis}\ H(\vartheta)(ii)),\nonumber\\
	&\Rightarrow&\tilde{\lambda}\vartheta(x)+f(z,x)\geq x^{p-1}\ \mbox{for a.a.}\ z\in\Omega,\ \mbox{all}\ x\geq 0\ (\mbox{recall that}\ \vartheta\geq 0,f\geq 0).
\end{eqnarray}

Consider $\lambda>\tilde{\lambda}$ and suppose that $\lambda\in\mathcal{L}$. Then we can find $u_\lambda\in S_\lambda\subseteq {\rm int}\, C_+$ (see Proposition \ref{prop11}). Let $\Omega_0\subseteq\Omega$ be a strict open subset of $\Omega$ with $C^2$-boundary $\partial\Omega_0$. We set
$$m_0=\min\limits_{\overline{\Omega}_0}u_\lambda>0.$$

Let $\rho=||u_\lambda||_\infty$ and let $\hat{\xi}_{\rho}>0$ be as postulated by hypothesis $H(f)(v)$. For $\epsilon>0$, we set $m^{\epsilon}_0=m_0+\epsilon$. We have
\begin{eqnarray*}
	&&-{\rm div}\, a(Dm^{\epsilon}_0)-\lambda\vartheta(m^{\epsilon}_0)+\hat{\xi}_\rho(m^{\epsilon}_0)^{p-1}\\
	&\leq&\hat{\xi}_\rho m^{p-1}_0-\lambda\vartheta(m_0)+\chi(\epsilon)\ \mbox{with}\ \chi(\epsilon)\rightarrow 0^+\ \mbox{as}\ \epsilon\rightarrow 0^+\ (\mbox{see hypotheses}\ H(\vartheta))\\
	&<&(1+\hat{\xi}_\rho)m^{p-1}_0-\lambda\vartheta(m_0)+\chi(\epsilon)\\
	&\leq&f(z,m_0)+\hat{\xi}_\rho m^{p-1}_0\ \mbox{for all small enough}\ \epsilon>0\ (\mbox{recall that}\ \lambda>\tilde{\lambda})\\
	&\leq&f(z,u_\lambda(z))+\hat{\xi}_\rho u_\lambda(z)^{p-1}\ (\mbox{see hypothesis}\ H(f)(v))\\
	&=&-{\rm div}\,a (Du_\lambda(z))-\lambda\vartheta(u_\lambda(z))+\hat{\xi}_\rho u_\lambda(z)^{p-1}\ \mbox{for almost all}\ z\in\Omega_0.
\end{eqnarray*}

Invoking Proposition \ref{prop6}, we have
$$u_\lambda-m^{\epsilon}_\lambda\in {\rm int}\,\hat{C}_+(\Omega_0)\ \mbox{for small enough}\ \epsilon>0,$$
which contradicts the fact that $m_\lambda=\min\limits_{\overline{\Omega}_0}u_\lambda.$

Therefore $\lambda\notin\mathcal{L}$ and so $\lambda^*\leq \tilde{\lambda}<+\infty$.
\end{proof}

Let $\bar{u}_\lambda\in {\rm int}\, C_+$ be the unique solution of problem (\ref{eqp}) produced in Proposition \ref{prop10}. Next, we show that $\bar{u}_\lambda$ is a lower bound for the elements of $S_\lambda$.
\begin{prop}\label{prop13}
	If hypotheses $H(a),H(\vartheta),H(f)$ hold and $\lambda\in\mathcal{L}$, then $\bar{u}_\lambda\leq u$ for all $u\in S_\lambda\subseteq {\rm int}\, C_+.$
\end{prop}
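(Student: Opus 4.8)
The plan is to show that the purely singular solution $\overline{u}_\lambda$ lies below every positive solution $u\in S_\lambda$ by exploiting the truncation technique already used in Proposition~\ref{prop11}, but now anchored at an arbitrary solution $u$ rather than at a minimizer. Fix $\lambda\in\mathcal{L}$ and $u\in S_\lambda\subseteq{\rm int}\,C_+$. The idea is to introduce the Carath\'eodory function
\begin{equation*}
	\hat{\tau}_\lambda(z,x)=\left\{
		\begin{array}{ll}
			\lambda\vartheta(x^+) &\mbox{if}\ x\leq u(z)\\
			\lambda\vartheta(u(z)) &\mbox{if}\ u(z)<x,
		\end{array}
	\right.
\end{equation*}
where we read $\vartheta(x^+)$ so that the singularity is retained on the relevant range (and some fixed positive value, say $\lambda\vartheta(\tilde u_\eta(z))$, is used for very small $x$ exactly as in \eqref{eq16} to keep the primitive finite). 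Setting $\hat{T}_\lambda(z,x)=\int_0^x\hat\tau_\lambda(z,s)\,ds$ and $\hat\psi_\lambda(v)=\int_\Omega G(Dv)\,dz-\int_\Omega\hat T_\lambda(z,v)\,dz$, Proposition~\ref{prop5} gives $\hat\psi_\lambda\in C^1(W^{1,p}_0(\Omega),\RR)$.

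The key steps, in order, are: (i) repeat the coercivity estimate from \eqref{eq21}: using Corollary~\ref{cor3}, the bound $\hat T_\lambda(z,v)\leq\lambda\vartheta(\tilde u_\eta)v^+ +$ (bounded terms from the truncated region), and $H(\theta)(ii)$, one gets $\hat\psi_\lambda(v)\geq\frac{c_1}{p(p-1)}\|v\|^p-c\|v\|$, hence $\hat\psi_\lambda$ is coercive; (ii) since $\hat\psi_\lambda$ is also sequentially weakly lower semicontinuous, it has a global minimizer $v_0\in W^{1,p}_0(\Omega)$, and the argument of \eqref{eq18}--\eqref{eq20} with a small multiple $t\hat u_1(q)\leq u$ of the first eigenfunction shows $\hat\psi_\lambda(v_0)<0=\hat\psi_\lambda(0)$, so $v_0\neq0$; (iii) from $\hat\psi'_\lambda(v_0)=0$ we obtain $\langle A(v_0),h\rangle=\int_\Omega\hat\tau_\lambda(z,v_0)h\,dz$ for all $h\in W^{1,p}_0(\Omega)$; testing with $h=(v_0-u)^+$ and using $\hat\tau_\lambda(z,v_0)=\lambda\vartheta(u)\leq\lambda\vartheta(u)+f(z,u)$ together with $A(u)$ solving \eqref{eqp} and Lemma~\ref{lem4} gives $v_0\leq u$; testing with $h=-v_0^-$ gives $v_0\geq0$, and a standard argument (as around \eqref{eq26}) gives $v_0\neq0$, so on the set where $0<v_0\leq u$ the equation reads $-{\rm div}\,a(Dv_0)=\lambda\vartheta(v_0)$, i.e. $v_0$ solves the purely singular problem \eqref{eq15}; (iv) by the uniqueness in Proposition~\ref{prop10}, $v_0=\overline{u}_\lambda$, and since $v_0\leq u$ we conclude $\overline{u}_\lambda\leq u$. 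As $u\in S_\lambda$ was arbitrary, this proves the claim.

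The main obstacle is step (iii): one must be careful that the truncated energy $\hat\psi_\lambda$ is genuinely $C^1$ despite the singularity of $\vartheta$ at $0$ — this is why the truncation must, near $x=0$, be frozen at the value $\lambda\vartheta(\tilde u_\eta(z))$ (with $\tilde u_\eta\in{\rm int}\,C_+$ from Proposition~\ref{prop9}), exactly as in \eqref{eq16}, so that Proposition~\ref{prop5} applies verbatim. One must also verify that the minimizer $v_0$ is not identically zero and, more delicately, that $v_0$ is strictly positive inside $\Omega$ so that $\vartheta(v_0)$ is well-defined a.e.\ and $v_0$ is a bona fide weak solution of \eqref{eq15}: this follows because $v_0\geq t\hat u_1(q)$ on a suitable subset or, more robustly, because the lower barrier $\tilde u_\eta\leq v_0$ can be enforced by testing with $(\tilde u_\eta - v_0)^+$ just as in \eqref{eq26}, which then also makes the freezing near zero irrelevant on the support. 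Once $\tilde u_\eta\leq v_0\leq u$ is established, nonlinear regularity (Lieberman \cite{18}) places $v_0\in{\rm int}\,C_+$ and Proposition~\ref{prop10} closes the argument.
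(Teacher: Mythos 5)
Your proposal is correct, but it takes a genuinely longer route than the paper. The paper's proof is a two-line direct comparison: since $u\in S_\lambda$ satisfies $\langle A(u),h\rangle=\int_\Omega[\lambda\vartheta(u)+f(z,u)]h\,dz$, it tests with $h=(\bar u_\lambda-u)^+$, uses $f\geq 0$ and the monotonicity $H(\vartheta)(ii)$ (on $\{\bar u_\lambda>u\}$ one has $\vartheta(u)\geq\vartheta(\bar u_\lambda)$) to obtain $\langle A(u),(\bar u_\lambda-u)^+\rangle\geq\langle A(\bar u_\lambda),(\bar u_\lambda-u)^+\rangle$, whence $(\bar u_\lambda-u)^+=0$ by the strict monotonicity of $A$ (Lemma \ref{lem4}). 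Your argument instead re-runs the existence machinery of Proposition \ref{prop10} with the truncation capped from above at $u$ and from below at $\tilde u_\eta$, minimizes the truncated energy, pins the minimizer $v_0$ into $[\tilde u_\eta,u]$, recognizes that $v_0$ then solves the purely singular problem \eqref{eq15}, and invokes the \emph{uniqueness} statement from Proposition \ref{prop10} to identify $v_0=\bar u_\lambda$. Both work: your approach buys nothing over the paper's here, since the paper already has $\bar u_\lambda$ in hand and only needs a weak comparison, not a re-construction. The direct test-function argument is shorter, avoids re-verifying coercivity, nontriviality, and regularity, and does not rely on the uniqueness of the singular solution (although that uniqueness is available). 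If you do keep your route, one point that needs to be said explicitly is that the lower cutoff parameter $\eta$ must be chosen small enough so that simultaneously $\eta\in(0,\eta_0]$ (so that \eqref{eq13}--\eqref{eq14} apply) \emph{and} $\tilde u_\eta\leq u$, which is available via Proposition \ref{prop8} since $u\in{\rm int}\,C_+$ and $\tilde u_\eta\to 0$ in $C^1_0(\overline\Omega)$ by Proposition \ref{prop9}; otherwise the three-regime truncation you wrote is not well-defined.
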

\begin{proof}
	Since $\lambda\in\mathcal{L}$ we have $\emptyset\neq S_\lambda\subseteq {\rm int}\, C_+$. Let $u\in S_\lambda$. Then
	\begin{equation}\label{eq36}
		\left\langle A(u),h\right\rangle=\int_\Omega[\lambda\vartheta(u)+f(z,u)]hdz\ \mbox{for all}\ h\in W^{1,p}_0(\Omega).
	\end{equation}
	
	In (\ref{eq36}) we choose $h=(\bar{u}_\lambda-u)^+\in W^{1,p}_0(\Omega)$. We have
	\begin{eqnarray*}
		&&\left\langle A(\bar{u}_\lambda),(\bar{u}_\lambda-u)^+\right\rangle\\
		&=&\int_\Omega[\lambda\vartheta(u)+f(z,u)](\bar{u}_\lambda-u)^+dz\\
		&\geq&\int_\Omega\lambda\vartheta(u)(\bar{u}_\lambda-u)^+dz\ (\mbox{since}\ f\geq 0)\\
		&\geq&\int_{\Omega}\lambda\vartheta(\bar{u}_\lambda)(\bar{u}_\lambda-u)^+dz\ (\mbox{see hypothesis}\ H(\vartheta)(ii))\\
		&=&\left\langle A(\bar{u}_\lambda),(\bar{u}_\lambda-u)^+\right\rangle,\\
		&\Rightarrow&\bar{u}_\lambda\leq u\ (\mbox{see Lemma \ref{lem4}}).
	\end{eqnarray*}
The proof is now complete.
\end{proof}

We also show that the map $\lambda\mapsto\bar{u}_\lambda$ from $(0,+\infty)$ into $C^1_0(\overline{\Omega})$ is nondecreasing, that is
$$0<\mu<\lambda\Rightarrow\bar{u}_\lambda-\bar{u}_\mu\in C_+.$$
\begin{prop}\label{prop14}
	If hypotheses $H(a),H(\vartheta),H(f)$ hold, then the map $\lambda\mapsto\bar{u}_\lambda$ from $(0,+\infty)$ into $C^1_0(\overline{\Omega})$ is nondecreasing.
\end{prop}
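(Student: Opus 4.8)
The goal is to show that $0<\mu<\lambda$ implies $\bar u_\lambda-\bar u_\mu\in C_+$, where $\bar u_\mu,\bar u_\lambda\in\operatorname{int}C_+$ are the unique positive solutions of the purely singular problem \eqref{eq15} at parameters $\mu$ and $\lambda$, respectively (Proposition \ref{prop10}). The natural strategy is a truncation-plus-comparison argument: freeze the smaller solution $\bar u_\mu$ and solve an auxiliary problem at level $\lambda$ whose reaction is truncated from below at $\bar u_\mu$, then show the solution of that auxiliary problem both coincides with $\bar u_\lambda$ (by uniqueness) and lies above $\bar u_\mu$.

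First I would introduce the Carath\'eodory truncation
\begin{equation*}
	\hat k_\lambda(z,x)=\begin{cases} \lambda\vartheta(\bar u_\mu(z)) &\text{if } x\le\bar u_\mu(z)\\ \lambda\vartheta(x) &\text{if } \bar u_\mu(z)<x,\end{cases}
\end{equation*}
set $\hat K_\lambda(z,x)=\int_0^x\hat k_\lambda(z,s)\,ds$ and consider the functional $\hat\Psi_\lambda(u)=\int_\Omega G(Du)\,dz-\int_\Omega\hat K_\lambda(z,u)\,dz$ on $W^{1,p}_0(\Omega)$, which is $C^1$ by Proposition \ref{prop5}. Exactly as in the proof of Proposition \ref{prop10} (using Corollary \ref{cor3} for coercivity below and $H(a)(iv)$ together with $t\hat u_1(q)\le\bar u_\mu$, via Proposition \ref{prop8}, to make $\hat\Psi_\lambda$ negative somewhere), one gets a global minimizer $w\in W^{1,p}_0(\Omega)\setminus\{0\}$ of $\hat\Psi_\lambda$ on a suitable ball, hence a critical point: $\langle A(w),h\rangle=\int_\Omega\hat k_\lambda(z,w)h\,dz$ for all $h$. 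Testing with $(\bar u_\mu-w)^+$ and using $\lambda\vartheta(\bar u_\mu)\ge\mu\vartheta(\bar u_\mu)$ (since $\lambda>\mu$ and $\vartheta>0$), together with the equation $-\operatorname{div}a(D\bar u_\mu)=\mu\vartheta(\bar u_\mu)$ satisfied by $\bar u_\mu$, yields $\langle A(w)-A(\bar u_\mu),(\bar u_\mu-w)^+\rangle\ge 0$, and the $(S)_+$/strict monotonicity of $A$ (Lemma \ref{lem4}) forces $\bar u_\mu\le w$. On $\{\bar u_\mu\le w\}$ the truncation is inactive, so $w$ actually solves \eqref{eq15} at parameter $\lambda$; nonlinear regularity then gives $w\in\operatorname{int}C_+$, and uniqueness in Proposition \ref{prop10} gives $w=\bar u_\lambda$. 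Combining, $\bar u_\mu\le\bar u_\lambda$, i.e. $\bar u_\lambda-\bar u_\mu\in C_+$.

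The only delicate point is the lower estimate step for $\hat\Psi_\lambda$ at small multiples of $\hat u_1(q)$: one must ensure that the negative singular contribution $-\lambda t\int_\Omega\vartheta(\bar u_\mu)\hat u_1(q)\,dz$ dominates the term $c\,t^q\hat\lambda_1(q)$ coming from $G$, which works because $q>1$ — this is verbatim the computation in \eqref{eq17}--\eqref{eq20}, so no genuinely new obstacle arises. A mild point of care is that $\vartheta(\bar u_\mu)\hat u_1(q)\in L^1(\Omega)$: since $\bar u_\mu\in\operatorname{int}C_+$ it is bounded below by a positive multiple of the distance to $\partial\Omega$, and $\vartheta$ has the growth $H(\vartheta)(i)$ with $\beta\in(0,1)$, so this integrability is standard. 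With these remarks the proof is essentially a transcription of the argument already used for Propositions \ref{prop10} and \ref{prop13}, and I expect it to be short.
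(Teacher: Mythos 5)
Your argument is correct, but it is considerably more roundabout than the paper's. The paper does not rebuild $\bar u_\lambda$ through a truncated minimization: since both $\bar u_\mu$ and $\bar u_\lambda$ are already known to exist (Proposition \ref{prop10}), it simply tests the weak formulation of the equation for $\bar u_\mu$ with $h=(\bar u_\mu-\bar u_\lambda)^+$, uses that on $\{\bar u_\mu>\bar u_\lambda\}$ one has $\vartheta(\bar u_\mu)\le\vartheta(\bar u_\lambda)$ by $H(\vartheta)(ii)$ together with $\mu<\lambda$ to get $\mu\vartheta(\bar u_\mu)\le\lambda\vartheta(\bar u_\lambda)$ there, and concludes
\begin{equation*}
\langle A(\bar u_\mu)-A(\bar u_\lambda),(\bar u_\mu-\bar u_\lambda)^+\rangle\le 0\ \Rightarrow\ \bar u_\mu\le\bar u_\lambda
\end{equation*}
by Lemma \ref{lem4}. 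That is the entire proof. Your route — freeze the singular term below $\bar u_\mu$, extract a minimizer $w$ of the truncated energy, show $w\ge\bar u_\mu$, observe the truncation then becomes inactive so $w$ solves the purely singular problem at level $\lambda$, and invoke uniqueness to identify $w=\bar u_\lambda$ — is a valid transcription of the existence machinery from Proposition \ref{prop10}, and the key comparison step (testing with $(\bar u_\mu-w)^+$ and using $\lambda\vartheta(\bar u_\mu)\ge\mu\vartheta(\bar u_\mu)$) is essentially the same ingredient the paper uses directly. What your approach costs is the whole detour through coercivity, weak lower semicontinuity, the small-$t\hat u_1(q)$ estimate, and nonlinear regularity, none of which is needed once existence and uniqueness of $\bar u_\lambda$ are in hand; what it buys is nothing beyond the direct comparison. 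The short version is preferable here, though your longer one is sound.
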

\begin{proof}
	Let $0<\mu<\lambda$. We have
	\begin{equation}\label{eq37}
		\left\langle A(\bar{u}_\mu),h\right\rangle=\int_\Omega\mu\vartheta(\bar{u}_\mu)hdz\ \mbox{for all}\ h\in W^{1,p}_0(\Omega).
	\end{equation}
	
	In (\ref{eq37}) we choose $h=(\bar{u}_\mu-\bar{u}_\lambda)^+\in W^{1,p}_0(\Omega)$. Then
	\begin{eqnarray*}
		\left\langle A(\bar{u}_\mu),(\bar{u}_\mu-\bar{u}_\lambda)^+\right\rangle&=&\int_\Omega\mu\vartheta(\bar{u}_\mu)(\bar{u}_\mu-\bar{u}_\lambda)^+dz\\
		&\leq&\int_\Omega\lambda\vartheta(\bar{u}_\lambda)(\bar{u}_\mu-\bar{u}_\lambda)^+dz\\
		&&(\mbox{since}\ \mu<\lambda\ \mbox{and see hypothesis}\ H(\vartheta)(ii))\\
		&=&\left\langle A(\bar{u}_\lambda),(\bar{u}_\mu-\bar{u}_\lambda)^+\right\rangle,\\
		\Rightarrow\bar{u}_\mu\leq\bar{u}_\lambda&&\mbox{(see Lemma \ref{lem4})}.
	\end{eqnarray*}
	
	Clearly, we also have $\bar{u}_\lambda\neq\bar{u}_\mu$.
\end{proof}

Now we can show that $\mathcal{L}$ is an interval.
\begin{prop}\label{prop15}
	If hypotheses $H(a),H(\vartheta),H(f)$ hold, $\lambda\in\mathcal{L}$ and $u_\lambda\in S_\lambda\subseteq {\rm int}\, C_+$, then $\mu\in\mathcal{L}$ and we can find $u_\mu\in S_\mu\subseteq {\rm int}\, C_+$ such that
	$$u_\lambda-u_\mu\in {\rm int}\, C_+.$$
\end{prop}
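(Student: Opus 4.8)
The plan is to show that $\mathcal{L}$ is "downward closed" by starting from $\lambda\in\mathcal{L}$ with a solution $u_\lambda\in S_\lambda\subseteq\mathrm{int}\,C_+$ and, for $0<\mu<\lambda$, constructing a solution of $(P_\mu)$ squeezed between the purely singular solution $\bar u_\mu$ and $u_\lambda$. First I would set up a truncation of the reaction for $(P_\mu)$ at the two barriers $\bar u_\mu$ (from below) and $u_\lambda$ (from above): define a Carath\'eodory function $\hat\tau_\mu(z,x)$ equal to $\mu\vartheta(\bar u_\mu(z))+f(z,\bar u_\mu(z))$ when $x\le\bar u_\mu(z)$, equal to $\mu\vartheta(x)+f(z,x)$ on $[\bar u_\mu(z),u_\lambda(z)]$, and equal to $\mu\vartheta(u_\lambda(z))+f(z,u_\lambda(z))$ when $x>u_\lambda(z)$. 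The associated energy functional $\hat\varphi_\mu(u)=\int_\Omega G(Du)\,dz-\int_\Omega\hat T_\mu(z,u)\,dz$ is $C^1$ (Proposition~\ref{prop5}) and coercive (the truncated reaction is bounded in $x$, so the $\frac{c_1}{p(p-1)}\|u\|^p$ term from Corollary~\ref{cor3} dominates) and sequentially weakly lower semicontinuous. Hence it has a global minimizer $u_\mu\in W^{1,p}_0(\Omega)$ with $\hat\varphi'_\mu(u_\mu)=0$.

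Next I would run the standard two comparison arguments on the critical point $u_\mu$. Testing $\hat\varphi'_\mu(u_\mu)=0$ with $(\bar u_\mu-u_\mu)^+$ and using $f\ge0$ together with $-\mathrm{div}\,a(D\bar u_\mu)=\mu\vartheta(\bar u_\mu)$ gives $\langle A(u_\mu),(\bar u_\mu-u_\mu)^+\rangle\ge\langle A(\bar u_\mu),(\bar u_\mu-u_\mu)^+\rangle$, so by Lemma~\ref{lem4} we get $\bar u_\mu\le u_\mu$. Testing with $(u_\mu-u_\lambda)^+$ and using that on the set $\{u_\mu>u_\lambda\}$ the truncated reaction equals $\mu\vartheta(u_\lambda)+f(z,u_\lambda)\le\lambda\vartheta(u_\lambda)+f(z,u_\lambda)$ (since $\mu<\lambda$ and $\vartheta\ge0$), and that $-\mathrm{div}\,a(Du_\lambda)=\lambda\vartheta(u_\lambda)+f(z,u_\lambda)$ because $u_\lambda\in S_\lambda$, yields $\langle A(u_\mu)-A(u_\lambda),(u_\mu-u_\lambda)^+\rangle\le0$, hence $u_\mu\le u_\lambda$ by Lemma~\ref{lem4}. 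Therefore $\bar u_\mu\le u_\mu\le u_\lambda$, the truncation is inactive, and $u_\mu$ solves $(P_\mu)$, so $\mu\in\mathcal{L}$; by Proposition~\ref{prop11} (or nonlinear regularity plus the lower bound $\bar u_\mu$), $u_\mu\in\mathrm{int}\,C_+$.

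It remains to upgrade the ordering $u_\mu\le u_\lambda$ to $u_\lambda-u_\mu\in\mathrm{int}\,C_+$, which is where the strong comparison principle (Proposition~\ref{prop7}) enters and where I expect the main technical care to be needed. Set $\rho=\|u_\lambda\|_\infty$ and pick $\hat\xi_\rho>0$ from hypothesis $H(f)(v)$, so that $x\mapsto f(z,x)+\hat\xi_\rho x^{p-1}$ is nondecreasing on $[0,\rho]$. Writing $h_1(z)=f(z,u_\mu(z))+\hat\xi_\rho u_\mu(z)^{p-1}$ and $h_2(z)=f(z,u_\lambda(z))+\hat\xi_\rho u_\lambda(z)^{p-1}$, both are in $L^\infty(\Omega)$, and $u_\mu,u_\lambda$ satisfy $-\mathrm{div}\,a(Du)-\mu\vartheta(u)+\hat\xi_\rho u^{p-1}=h_i$ for $i=1,2$ respectively—note we use the \emph{same} parameter $\mu$ here, rewriting the equation for $u_\lambda$ as $-\mathrm{div}\,a(Du_\lambda)-\mu\vartheta(u_\lambda)+\hat\xi_\rho u_\lambda^{p-1}=(\lambda-\mu)\vartheta(u_\lambda)+f(z,u_\lambda)+\hat\xi_\rho u_\lambda^{p-1}=:h_2$. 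The key point is to verify $h_1\prec h_2$: on any compact $K\subseteq\Omega$ we have $u_\mu\le u_\lambda$ so the monotone part gives $f(z,u_\mu)+\hat\xi_\rho u_\mu^{p-1}\le f(z,u_\lambda)+\hat\xi_\rho u_\lambda^{p-1}$, and additionally $h_2-h_1\ge(\lambda-\mu)\vartheta(u_\lambda(z))\ge(\lambda-\mu)\vartheta(\|u_\lambda\|_\infty)>0$ on $K$ because $u_\lambda\le\|u_\lambda\|_\infty$ and $\vartheta$ is nonincreasing and strictly positive. Hence $h_1\prec h_2$, and Proposition~\ref{prop7} (with the weak comparison remark guaranteeing $u_\mu\le u_\lambda$, consistent with what we proved) yields $u_\lambda-u_\mu\in\mathrm{int}\,C_+$, completing the proof. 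The only subtlety is making sure $\vartheta(u_\lambda)$ is genuinely bounded below on compacts, which follows since $u_\lambda\in\mathrm{int}\,C_+$ keeps it away from $0$ on any compact subset of $\Omega$ and $\vartheta$ is locally Lipschitz and positive.
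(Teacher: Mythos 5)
Your proof is correct and follows essentially the same strategy as the paper: truncate the reaction for $(P_\mu)$ between $\bar u_\mu$ and $u_\lambda$ (the ordering $\bar u_\mu\le\bar u_\lambda\le u_\lambda$ needed to make the truncation well defined comes from Propositions~\ref{prop13} and~\ref{prop14}), minimize the coercive truncated functional, squeeze the minimizer into $[\bar u_\mu,u_\lambda]$ with the two one-sided tests, and then upgrade $u_\mu\le u_\lambda$ to $u_\lambda-u_\mu\in\mathrm{int}\,C_+$ via Proposition~\ref{prop7} using the shift $\hat\xi_\rho$ from $H(f)(v)$. The only (cosmetic) difference is that you normalize both equations with $\mu\vartheta(\cdot)$ as the common singular term, while the paper uses $\lambda\vartheta(\cdot)$ and absorbs $(\mu-\lambda)\vartheta(u_\mu)<0$ into $h_1$; also note that the correct reason $\vartheta(u_\lambda)$ is bounded below is the one you gave first, namely $\vartheta$ nonincreasing together with $u_\lambda\le\|u_\lambda\|_\infty$, not the remark in your final sentence (keeping $u_\lambda$ away from $0$ on compacts bounds $\vartheta(u_\lambda)$ from \emph{above}, not below).
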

\begin{proof}
	On account of Propositions \ref{prop13} and \ref{prop14}, we have
	$$\bar{u}_\mu\leq\bar{u}_\lambda\leq u_\lambda.$$
	
	We introduce the Carath\'eodory function $\gamma_\mu(z,x)$ defined by
	\begin{eqnarray}\label{eq38}
		&&\gamma_\mu(z,x)=\left\{\begin{array}{ll}
			\mu\vartheta(\bar{u}_\mu(z))+f(z,\bar{u}_\mu(z))&\mbox{if}\ x<\bar{u}_\mu(z)\\
			\mu\vartheta(x)+f(z,x)&\mbox{if}\ \bar{u}_\mu(z)\leq x\leq u_\lambda(z)\\
			\mu\vartheta(u_\lambda(z))+f(z,u_\lambda(z))&\mbox{if}\ u_\lambda(z)<x.
		\end{array}\right.
	\end{eqnarray}
	
	Let $\Gamma_\mu(z,x)=\int^x_0\gamma_\mu(z,s)ds$ and consider the $C^1$-functional $\tilde{\varphi}_\mu:W^{1,p}_0(\Omega)\rightarrow\RR$ defined by
	$$\tilde{\varphi}_\mu(u)=\int_\Omega G(Du)dz-\int_\Omega \Gamma_\mu(z,u)dz\ \mbox{for all}\ u\in W^{1,p}_0(\Omega).$$
	
	Corollary \ref{cor3} and (\ref{eq38}) imply that $\tilde{\varphi}_\mu(\cdot)$ is coercive. Also it is sequentially weakly lower semicontinuous. So, we can find $u_\mu\in W^{1,p}_0(\Omega)$ such that
	\begin{eqnarray}\label{eq39}
		&&\tilde{\varphi}_\mu(u_\mu)=\inf\{\tilde{\varphi}_\mu(u):u\in W^{1,p}_0(\Omega)\},\nonumber\\
		&\Rightarrow&\tilde{\varphi}'_\mu(u_\mu)=0,\nonumber\\
		&\Rightarrow&\left\langle A(u_\mu),h\right\rangle=\int_\Omega\gamma_\mu(z,u_\mu)hdz\ \mbox{for all}\ h\in W^{1,p}_0(\Omega).
	\end{eqnarray}
	
	In (\ref{eq39}) we first choose $h=(\bar{u}_\mu-u_\mu)^+\in W^{1,p}_0(\Omega)$. Then
	\begin{eqnarray*}
		\left\langle A(u_\mu),(\bar{u}_\mu-u_\mu)^+\right\rangle&=&\int_\Omega[\mu\vartheta(\bar{u}_\mu)+f(z,\bar{u}_\mu)](\bar{u}_\mu-u_\mu)^+dz\ (\mbox{see (\ref{eq38})})\\
		&\geq&\int_\Omega\mu\vartheta(\bar{u}_\mu)(\bar{u}_\mu-u_\mu)^+dz\ (\mbox{since}\ f\geq 0)\\
		&=&\left\langle A(\bar{u}_\mu),(\bar{u}_\mu-u_\mu)^+\right\rangle,\\
		\Rightarrow\bar{u}_\mu\leq u_\mu. &&
	\end{eqnarray*}
	
	Next, in (\ref{eq39}) we choose $h=(u_\mu-u_\lambda)^+\in W^{1,p}_0(\Omega)$. Then
	\begin{eqnarray*}
		\left\langle A(u_\mu),(u_\mu-u_\lambda)^+\right\rangle&=&\int_\Omega[\mu\vartheta(u_\lambda)+f(z,u_\lambda)](u_\mu-u_\lambda)^+dz\ (\mbox{see (\ref{eq38})})\\
		&\leq&\int_\Omega[\lambda\vartheta(u_\vartheta)+f(z,u_\lambda)](u_\mu-u_\lambda)^+dz\ (\mbox{since}\ \mu<\lambda)\\
		&=&\left\langle A(u_\lambda),(u_\mu-u_\lambda)^+\right\rangle,\\
		\Rightarrow u_\mu\leq u_\lambda.&&
	\end{eqnarray*}
	
	So, we have proved that
	\begin{equation}\label{eq40}
		u_\mu\in[\bar{u}_\lambda,u_\lambda].
	\end{equation}
	
	From (\ref{eq38}), (\ref{eq39}), (\ref{eq40}) it follows that
	$$u_\mu\in S_\mu\subseteq {\rm int}\, C_+\ \mbox{and so}\ \mu\in\mathcal{L}.$$
	
	Now, let $\rho=||u_\lambda||_\infty$ and let $\hat{\xi}_\rho>0$ be as postulated by hypothesis $H(f)(v)$. We have
	\begin{eqnarray}\label{eq41}
		&&-{\rm div}\,a(Du_\mu(z))-\lambda\vartheta(u_\mu(z))+\hat{\xi}_\rho u_\mu(z)^{p-1}\nonumber\\
		&=&(\mu-\lambda)\,\vartheta(u_\mu(z))+f(z,u_\mu(z))+\hat{\xi}_\rho u_\mu(z)^{p-1}\nonumber\\
		&\leq&f(z,u_\lambda(z))+\hat{\xi}_\rho u_\mu(z)^{p-1}\ (\mbox{since}\ \lambda>\mu\ \mbox{and using hypothesis}\ H(f)(v))\nonumber\\
		&=&-{\rm div}\,a(Du_\lambda(z))-\lambda\vartheta(u_\lambda(z))+\hat{\xi}_\rho u_\lambda(z)^{p-1}\ \mbox{for almost all}\ z\in\Omega\\
		&&(\mbox{recall that}\ u_\lambda\in S_\lambda).\nonumber
	\end{eqnarray}
	
	Note that
	$$0\prec(\lambda-\mu)\,\vartheta(u_\mu(\cdot)).$$
	
	Therefore from (\ref{eq41}) and Proposition \ref{prop7}, we conclude that
	$$u_\lambda-u_\mu\in {\rm int}\, C_+.$$
The proof is now complete.
\end{proof}

Next, we show that the critical parameter $\lambda^*>0$ is admissible (that is, $\lambda^*\in\mathcal{L}$).
\begin{prop}\label{prop16}
	If hypotheses $H(a),H(\vartheta),H(f)$ hold, then $\lambda^*\in\mathcal{L}$.
\end{prop}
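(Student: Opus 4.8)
The plan is to take a sequence $\lambda_n \uparrow \lambda^*$ with $\lambda_n \in \mathcal{L}$ and corresponding solutions $u_n \in S_{\lambda_n} \subseteq \mathrm{int}\,C_+$, chosen so that the sequence is manageable, and then pass to the limit to produce a solution at $\lambda^*$. By Proposition \ref{prop15} I may assume the $u_n$ are increasing (or at least bounded below by $\bar{u}_{\lambda_1}$), and in particular $\bar{u}_{\lambda_1} \leq u_n$ for all $n$, so the singular term $\lambda_n\vartheta(u_n)$ is controlled from above on all of $\Omega$ by $\lambda^*\vartheta(\bar{u}_{\lambda_1}) \in L^1(\Omega)$ (indeed in $L^{p'}$ near the boundary by the standard estimates on $\bar u_{\lambda_1}\in\mathrm{int}\,C_+$, using $\beta<1$). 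First I would establish that $\{u_n\}_{n\geq1}$ is bounded in $W^{1,p}_0(\Omega)$. This is the step I expect to be the main obstacle, because the perturbation $f$ is $(p-1)$-superlinear without the AR-condition; the resolution is to use the quasimonotonicity hypothesis $H(f)(iii)$ on $e_\lambda(z,\cdot)$ together with $H(a)(iv)$ (which gives $pG_0(t) - a_0(t)t^2 \geq 0$, i.e. the energy-minus-scaled-derivative inequality). Concretely, test the equation for $u_n$ with $u_n$ itself and also use the fact that $u_n$ is a critical point of the associated energy functional; form the combination $p\varphi_{\lambda_n}(u_n) - \langle \varphi_{\lambda_n}'(u_n), u_n\rangle$. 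Since each $u_n$ actually solves \eqref{eqp} (not just a truncated problem), one checks $\varphi_{\lambda_n}(u_n) \leq 0$ or at least $\varphi_{\lambda_n}(u_n)$ is bounded above (e.g. by comparison with the value at $t\hat u_1(q)$ as in Proposition \ref{prop11}), and the said combination equals $\int_\Omega e_{\lambda_n}(z,u_n)\,dz$ plus boundary-type singular terms; invoking $H(f)(iii)$ with the monotonicity of $\lambda\mapsto\hat\beta_\lambda$ bounds $\int_\Omega e_{\lambda_n}(z,u_n)\,dz$ from below, and then the superlinearity in $H(f)(ii)$ forces $\|u_n\|$ to stay bounded (otherwise a normalized-sequence argument, passing to $y_n = u_n/\|u_n\|$, yields a contradiction via $\frac{F(z,x)}{x^p}\to+\infty$).

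Once boundedness in $W^{1,p}_0(\Omega)$ is in hand, I would pass to a subsequence with $u_n \xrightarrow{w} u^* $ in $W^{1,p}_0(\Omega)$ and $u_n \to u^*$ in $L^r(\Omega)$. Testing the equations with $u_n - u^*$ and using that $\lambda_n\vartheta(u_n)(u_n-u^*)$ is dominated (via $\bar u_{\lambda_1}\le u_n$ and $\beta<1$) while $f(\cdot,u_n)(u_n-u^*)\to 0$, I get $\limsup \langle A(u_n), u_n - u^*\rangle \leq 0$, so by the $(S)_+$-property of $A$ (Lemma \ref{lem4}) $u_n \to u^*$ strongly in $W^{1,p}_0(\Omega)$. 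Passing to the limit in the weak formulation
\[
\langle A(u_n), h\rangle = \int_\Omega [\lambda_n \vartheta(u_n) + f(z,u_n)]\, h\, dz,
\]
using the dominated convergence for the singular term (monotone in $u_n$, bounded above by the $L^1$ function $\lambda^*\vartheta(\bar u_{\lambda_1})$, and $u_n \geq \bar u_{\lambda_1}$ keeps it from blowing up) and continuity of $f$, I obtain that $u^*$ is a weak solution of \eqref{eqp} at $\lambda = \lambda^*$. Since $u^* \geq \bar u_{\lambda_1} \in \mathrm{int}\,C_+ > 0$, the singular term $\vartheta(u^*)$ lies in $L^\infty_{loc}$ and the nonlinear regularity theory of Lieberman together with $S_{\lambda^*}$-type arguments (exactly as in Proposition \ref{prop11}) give $u^* \in \mathrm{int}\,C_+$. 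Hence $\lambda^* \in \mathcal{L}$, which completes the proof. A technical point to treat carefully is justifying that $\bar u_{\lambda_1}\le u_n$ for all $n$ — this follows from Proposition \ref{prop13} (applied at each $\lambda_n$) combined with monotonicity of $\lambda\mapsto\bar u_\lambda$ from Proposition \ref{prop14}, giving the uniform lower bound $\bar u_{\lambda_1}\le\bar u_{\lambda_n}\le u_n$.
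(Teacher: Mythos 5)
The overall strategy (take $\lambda_n \uparrow \lambda^*$, pick $u_n \in S_{\lambda_n}$, bound in $W^{1,p}_0(\Omega)$, pass to the limit via the $(S)_+$-property) is the same as the paper's, and the lower bound $\bar u_{\lambda_1}\leq u_n$ via Propositions~\ref{prop13} and~\ref{prop14}, the dominated-convergence handling of the singular term, and the regularity conclusion are all fine. But there is a genuine gap at the one step you yourself flag as the main obstacle: the uniform upper energy bound.

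You write that ``one checks $\varphi_{\lambda_n}(u_n)\leq 0$ or at least $\varphi_{\lambda_n}(u_n)$ is bounded above (e.g. by comparison with the value at $t\hat u_1(q)$).'' That comparison only shows that $\inf\varphi_{\lambda_n}<0$; it says nothing about the critical value $\varphi_{\lambda_n}(u_n)$ of an \emph{arbitrary} solution $u_n\in S_{\lambda_n}$, which could a priori be a mountain-pass value going to $+\infty$. Without a uniform bound on $\varphi_{\lambda_n}(u_n)$ you cannot form the combination $p\varphi_{\lambda_n}(u_n)-\langle\varphi_{\lambda_n}'(u_n),u_n\rangle$ and extract a bound on $\int_\Omega e_{\lambda_n}(z,u_n)\,dz$, so the whole boundedness argument stalls. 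Your fallback, invoking Proposition~\ref{prop15} to make the sequence $\{u_n\}$ increasing, also does not work: that proposition propagates solutions \emph{downward} in $\lambda$ (given $u_\lambda\in S_\lambda$ and $\mu<\lambda$ you get $u_\mu\leq u_\lambda$); since $\lambda_n\uparrow\lambda^*$ and there is no solution at $\lambda^*$ yet, there is no upper anchor to generate an increasing sequence.

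The paper's remedy is to \emph{construct} a specific sequence of solutions with controlled energy. For each $n$, take any $\hat u_{n+1}\in S_{\lambda_{n+1}}$, truncate the reaction below at $\bar u_{\lambda_1}$ and above at $\hat u_{n+1}$ to get a coercive, weakly lower semicontinuous functional $\bar\varphi_{\lambda_n}$, and let $u_n$ be its global minimizer. Then $\bar\varphi_{\lambda_n}(u_n)\leq\bar\varphi_{\lambda_n}(\bar u_{\lambda_1})\leq 0$, the last inequality using $G(y)\leq(a(y),y)_{\mathbb R^N}$, $f\geq 0$, $\lambda_1\leq\lambda_n$, and the fact that $\bar u_{\lambda_1}$ solves the purely singular problem \eqref{eq15}. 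Since $u_n\in[\bar u_{\lambda_1},\hat u_{n+1}]$, the truncated and untruncated functionals agree at $u_n$, which yields the needed $\varphi_{\lambda_n}(u_n)\leq 0$, and in particular $u_n$ actually solves the $\lambda_n$-problem. Without this construction (or an equivalent device to pin down the energy), your argument does not close.
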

\begin{proof}
	Let $\{\lambda_n\}_{n\geq 1}\subseteq\mathcal{L}$ be such that $\lambda_n\uparrow\lambda^*$. We consider the Carath\'eodory functions $j_n:\Omega\times\RR\rightarrow\RR$ defined by
	\begin{equation}\label{eq42}
		j_n(z,x)=\left\{\begin{array}{ll}
			\lambda_n\vartheta(\bar{u}_{\lambda_1}(z))+f(z,\bar{u}_{\lambda_1}(z))&\mbox{if}\ x\leq\bar{u}_{\lambda_1}(z)\\
			\lambda_n\vartheta(x)+f(z,x)&\mbox{if}\ \bar{u}_{\lambda_1}(z)<x.
		\end{array}\right.
	\end{equation}
	
	We set $J_n(z,x)=\int^x_0j_n(z,s)ds$ and consider the $C^1$-functional $\varphi_{\lambda_n}:W^{1,p}_0(\Omega)\rightarrow\RR$ defined by
	$$\varphi_{\lambda_n}(u)=\int_\Omega G(Du)dz-\int_\Omega J_n(z,u)dz\ \mbox{for all}\ u\in W^{1,p}_0(\Omega)$$
	(see Proposition \ref{prop5}). Since $\lambda_n\in\mathcal{L}$, we can find $\hat{u}_n\in S_{\lambda_n}\subseteq {\rm int}\, C_+$ for all $n\in\NN$. Let $\bar{\gamma}_{\lambda_n}(z,x)$ be the Carath\'eodory function defined by
	\begin{equation}\label{eq43}
		\bar{\gamma}_{\lambda_n}(z,x)=\left\{\begin{array}{ll}
			\lambda_n\vartheta(\bar{u}_{\lambda_1}(z))+f(z,\bar{u}_{\lambda_1}(z))&\mbox{if}\ x<\bar{u}_{\lambda_1}(z)\\
			\lambda_n\vartheta(x)+f(z,x)&\mbox{if}\ \bar{u}_{\lambda_1}(z)\leq x\leq\hat{u}_{n+1}(z)\\
			\lambda_n\vartheta(\hat{u}_{n+1}(z))+f(z,\hat{u}_{n+1}(z))&\mbox{if}\ \hat{u}_{n+1}(z)<x.
		\end{array}\right.
	\end{equation}
	
	We set $\bar{\Gamma}_{\lambda_n}(z,x)=\int^x_0\bar{\gamma}_{\lambda_n}(z,s)ds$ and then using Proposition \ref{prop5}, we introduce the $C^1$-functional $\bar{\varphi}_{\lambda_n}:W^{1,p}_0(\Omega)\rightarrow\RR$ defined by
	$$\bar{\varphi}_{\lambda_n}(u)=\int_\Omega G(Du)dz-\int_\Omega\bar{\Gamma}_{\lambda_n}(z,u)dz\ \mbox{for all}\ u\in W^{1,p}_0(\Omega).$$
	
	It is clear from Corollary \ref{cor3} and (\ref{eq43}) that $\bar{\varphi}_{\lambda_n}(\cdot)$ is coercive. Also, it is sequentially weakly lower semicontinuous. So, we can find $u_n\in W^{1,p}_0(\Omega)$ such that
	\begin{equation}\label{eq44}
		\bar{\varphi}_{\lambda_n}(u_n)=\inf\{\bar{\varphi}_{\lambda_n}(u):u\in W^{1,p}_0(\Omega)\}.
	\end{equation}
	
	We have
	\begin{eqnarray*}
		\bar{\varphi}_{\lambda_n}(u_n)&\leq&\bar{\varphi}_{\lambda_n}(\bar{u}_{\lambda_1})\\
		&\leq&\int_\Omega G(D\bar{u}_{\lambda_1})dz-\lambda_n\int_\Omega\vartheta(\bar{u}_{\lambda_1})dz\\
		&&(\mbox{see (\ref{eq43}) and recall that}\ f\geq 0)\\
		&\leq&\left\langle A(\bar{u}_{\lambda_1}),\bar{u}_{\lambda_1}\right\rangle-\lambda_1\int_\Omega\vartheta(\bar{u}_{\lambda_1})\bar{u}_{\lambda_1}dz\\
		&&(\mbox{see (\ref{eq2}) and recall that}\ 0<\lambda_1\leq\lambda_n,\ n\in\NN)\\
		&=&0\ (\mbox{since}\ \bar{u}_{\lambda_1}\in {\rm int}\, C_+\ \mbox{solves (\ref{eq15})}).
	\end{eqnarray*}
	
	From (\ref{eq42}) and (\ref{eq43}), we see that
	\begin{eqnarray}\label{eq45}
		&&\varphi_{\lambda_n}=\bar{\varphi}_{\lambda_n}\ \mbox{on}\ [\bar{u}_{\lambda_1},\hat{u}_{n+1}]\ \mbox{for all}\ n\in\NN,\nonumber\\
		&\Rightarrow&\varphi_{\lambda_n}(u_n)\leq 0\ \mbox{for all}\ n\in\NN.
	\end{eqnarray}
	
	Since $u_n\in K_{\bar{\varphi}_{\lambda_n}}$ (see (\ref{eq44})), as in the proof of Proposition \ref{prop15}, using (\ref{eq43}) we can show that $u\in[\bar{u}_\lambda,\hat{u}_{n+1}]\cap {\rm int}\, C_+$ for all $n\in\NN$. Therefore for every $n\in\NN$, we have
	\begin{eqnarray}\label{eq46}
		&&\left\langle A(u_n),h\right\rangle=\int_\Omega j_n(z,u_n)hdz\ \mbox{for all}\ n\in\NN\ \mbox{and all}\ h\in W^{1,p}_0(\Omega).
	\end{eqnarray}
	
	In (\ref{eq46}) we choose $h=u_n\in W^{1,p}_0(\Omega)$. Then
	\begin{equation}\label{eq47}
		-\int_\Omega(a(Du_n),Du_n)_{\RR^N}dz+\int_\Omega j_n(z,u_n)u_ndz=0\ \mbox{for all}\ n\in\NN.
	\end{equation}
	
	From (\ref{eq45}) we have
	\begin{equation}\label{eq48}
		\int_\Omega pG(Du_n)dz-\int_\Omega pJ_n(z,u_n)dz\leq 0\ \mbox{for all}\ n\in\NN.
	\end{equation}
	
	We add (\ref{eq47}) and (\ref{eq48}) and obtain
	\begin{eqnarray}\label{eq49}
		&&\int_\Omega[pG(Du_n)-(a(Du_n),Du_n)_{\RR^N}]dz+\int_\Omega[j_n(z,u_n)u_n-pJ_n(z,u_n)]dz\leq 0\nonumber\\
		&&\mbox{for all}\ n\in\NN,\nonumber\\
		&\Rightarrow&\int_\Omega[j_n(z,u_n)u_n-pJ_n(z,u_n)]dz\leq 0\ \mbox{for all}\ n\in\NN\ (\mbox{see hypothesis}\ H(a)(iv)),\nonumber\\
		&\Rightarrow&\int_\Omega[(\lambda_n\vartheta(u_n)+f(z,u_n))u_n-p(\lambda_n\theta(u_n)+F(z,u_n))]dz\leq c_{20}\ \mbox{for some}\ c_{20}>0,\nonumber\\
		&&\mbox{all}\ n\in\NN\ (\mbox{see (\ref{eq42})}),\nonumber\\
		&\Rightarrow&\int_{\Omega}e_{\lambda_n}(z,u_n)dz\leq c_{20}\ \mbox{for all}\ n\in\NN.
	\end{eqnarray}
	\begin{claim}\label{cl16.1}
		$\{u_n\}_{n\geq 1}\subseteq W^{1,p}_0(\Omega)$ is bounded.
	\end{claim}
	
	We argue by contradiction. So, suppose that the claim is not true. Then by passing to a subsequence if necessary, we may assume that
	\begin{equation}\label{eq50}
		||u_n||\rightarrow+\infty.
	\end{equation}
	
	We set $y_n=\frac{u_n}{||u_n||}$ for all $ n\in\NN$. Then $||y_n||=1,\ y_n\geq 0$ for all $n\in\NN$. So, we may assume that
	\begin{equation}\label{eq51}
		y_n\stackrel{w}{\rightarrow}y\ \mbox{in}\ W^{1,p}_0(\Omega)\ \mbox{and}\ y_n\rightarrow y\ \mbox{in}\ L^p(\Omega),\ y\geq 0.
	\end{equation}
	
	First, we assume that $y\neq 0$. If $\Omega_+=\{z\in\Omega:y(z)>0\}$, then $|\Omega_+|_N>0$ where by $|\cdot|_N$ we denote the Lebesgue measure on $\RR^N$ (recall that $y\geq 0,\ y\neq 0$). We have
	$$u_n(z)\rightarrow+\infty\ \mbox{for all}\ z\in\Omega_+\ (\mbox{see (\ref{eq50})}).$$
	
	On account of hypothesis $H(f)(ii)$, we have
	$$\frac{pF(z,u_n(z))}{||u_n||^p}=\frac{pF(z,u_n(z))}{u_n(z)^p}y_n(z)^p\rightarrow+\infty\ \mbox{for almost all}\ z\in\Omega_+.$$
	
	Then from Fatou's lemma (see hypothesis $H(f)(ii)$), we have
	\begin{eqnarray}\label{eq52}
		&&\frac{1}{||u_n||^p}\int_{\Omega_+}pF(z,u_n)dz\rightarrow+\infty,\nonumber\\
		&\Rightarrow&\frac{1}{||u_n||^p}\int_\Omega pF(z,u_n)dz\rightarrow+\infty\ (\mbox{since}\ F\geq 0).
	\end{eqnarray}
	
	Also from hypotheses $H(\vartheta)(i),(ii)$ we see that
	\begin{equation}\label{eq53}
		0\leq\frac{1}{||u_n||^p}\int_\Omega\theta (u_n)dz\rightarrow 0\ \mbox{as}\ n\rightarrow\infty.
	\end{equation}
	
	So, finally we can say that
	\begin{equation}\label{eq54}
		\frac{1}{||u_n||^p}\int_\Omega[\lambda_n\theta(u_n)+pF(z,u_n)]dz\rightarrow+\infty\ \mbox{as}\ n\rightarrow\infty\ (\mbox{see (\ref{eq52}), (\ref{eq53})}).
	\end{equation}
	
	From hypothesis $H(f)(iii)$, we have for almost all $z\in\Omega$, and all $x\geq 0$
	\begin{eqnarray}\label{eq55}
		&&0\leq e_{\lambda_n}(z,x)+\hat{\beta}_{\lambda^*}(z),\nonumber\\
		&\Rightarrow&p[\lambda_n\theta(x)+F(z,x)]-\hat{\beta}_{\lambda^*}(z)\leq[\lambda_n\vartheta(x)+f(z,x)]x\,.
	\end{eqnarray}
	
	From (\ref{eq47}) and hypothesis $H(a)(iv)$, we have
	\begin{eqnarray}\label{eq56}
		&&\int_\Omega j_n(z,u_n)u_ndz\leq\int_\Omega pG(Du_n)dz\ \mbox{for all}\ n\in\NN,\nonumber\\
		&\Rightarrow&\int_\Omega p[\lambda_n\theta(u_n)+F(z,u_n)]dz\leq\int_\Omega pG(Du_n)dz+c_{21}\nonumber\\
		&&\mbox{for some}\ c_{21}>0,\ \mbox{all}\ n\in\NN\ (\mbox{see (\ref{eq42}) and (\ref{eq55})}),\nonumber\\
		&\Rightarrow&\frac{p}{||u_n||^p}\int_\Omega[\lambda_n\theta(u_n)+F(z,u_n)]dz\leq c_{22}\left(\frac{1}{||u_n||^p}+||y_n||^p\right)\leq c_{23}\\
		&&\mbox{for some}\ c_{22},c_{23}>0\ \mbox{and all}\ n\in\NN\ (\mbox{see Corollary \ref{cor3}}).\nonumber
	\end{eqnarray}
	
	We compare (\ref{eq54}) and (\ref{eq56}) and we obtain a contradiction.
	
	Now assume that $y\equiv 0$. For $\eta>0$, we define
	$$v_n=(p\eta)^{1/p}y_n\in W^{1,p}_0(\Omega)\ \mbox{for all}\ n\in\NN.$$
	
	From (\ref{eq50}) we see that we can find $n_0\in\NN$ such that
	\begin{equation}\label{eq57}
		(p\eta^{1/p})\frac{1}{||u_n||^p}\leq 1\ \mbox{for all}\ n\geq n_0.
	\end{equation}
	
	Consider the $C^1$-functional $\Im_{\lambda_n}:W^{1,p}_0(\Omega)\rightarrow\RR$ defined by
	$$\Im_{\lambda_n}(u)=\frac{c_1}{p(p-1)}||Du||^p_p-\int_\Omega J_{\lambda_n}(z,u)dz\ \mbox{for all}\ u\in W^{1,p}_0(\Omega).$$
	
	Let $t_n\in[0,1]$ (for $n\in\NN$) be such that
	\begin{equation}\label{eq58}
		\Im_{\lambda_n}(t_n,u_n)=\max\{\Im_{\lambda_n}(tu_n):0\leq t\leq 1\}.
	\end{equation}
	
	From (\ref{eq57}) and (\ref{eq58}) it follows that
	\begin{eqnarray*}
		&&\Im_{\lambda_n}(t_nu_n)\geq \Im_{\lambda(v_n)}\\
		&\geq&\frac{\eta}{p-1}||Dy_n||^p_p-\int_\Omega J_{\lambda_n}(z,v_n)dz\\
		&\geq&\frac{\eta}{p-1}-\int_\Omega J_{\lambda^*}(z,v_n)dz\ (\mbox{since}\ ||y_n||=1\ \mbox{and}\ \lambda_n\leq\lambda^*\ \mbox{for all}\ n\in\NN)\\
		&\geq&\frac{\eta}{p-1}-\int_\Omega[\lambda^*\theta(v_n)+F(z,v_n)]dz-c_{24}||v_n||\\
		&&\mbox{for some}\ c_{24}>0,\ \mbox{and all}\ n\geq n_0\ (\mbox{see (\ref{eq42})}).
	\end{eqnarray*}
	
	Since we have assumed that $y\equiv 0$, we have
	\begin{eqnarray*}
		&&\int_\Omega[\lambda^*\theta(v_n)+F(z,v_n)]dz+c_{24}||v_n||\rightarrow 0\ \mbox{as}\ n\rightarrow\infty,\\
		&\Rightarrow&\Im_{\lambda_n}(t_nu_n)\geq\frac{1}{2}\ \frac{\eta}{p-1}\ \mbox{for all}\ n\geq n_1\geq n_0.
	\end{eqnarray*}
	
	But $\eta>0$ is arbitrary. So, we can infer that
	\begin{equation}\label{eq59}
		\Im_{\lambda_n}(t_nu_n)\rightarrow+\infty\ \mbox{as}\ n\rightarrow\infty.
	\end{equation}
	
	Note that
	$$\Im_{\lambda_n}(0)=0\ \mbox{and}\ \Im_{\lambda_n}(u_n)\leq\varphi_{\lambda_n}(u_n)\leq 0\ \mbox{for all}\ n\in\NN\ (\mbox{see Corollary \ref{cor3} and (\ref{eq45})}).$$
	
	Then from (\ref{eq59}) it follows that
	\begin{eqnarray}\label{eq60}
		&&t_n\in(0,1)\ \mbox{for all}\ n\geq n_2,\nonumber\\
		&\Rightarrow&\frac{d}{dt}\Im_{\lambda_n}(tu_n)|_{t=t_n}=0\ (\mbox{see (\ref{eq58})})\nonumber\\
		&\Rightarrow&\left\langle \Im'_{\lambda_n}(t_nu_n),t_nu_n\right\rangle=0\ (\mbox{by the chain rule}),\nonumber\\
		&\Rightarrow&\frac{c_1}{p-1}||D(t_nu_n)||^p_p=\int_\Omega j_{\lambda_n}(z,t_nu_n)(t_nu_n)dz\nonumber\\
		&&\leq\int_\Omega[\lambda_n\vartheta(t_nu_n)+f(z,t_nu_n)](t_nu_n)d+c_{25}\nonumber\\
		&&\mbox{for some}\ c_{25}>0,\ \mbox{and all}\ n\geq n_2\ (\mbox{see (\ref{eq42})})\nonumber\\
		&&\leq\int_\Omega p[\lambda_n\theta(t_nu_n)+F(z,t_nu_n)]dz+\int_\Omega e_{\lambda_n}(z,u_n)dz+c_{26}\nonumber\\
		&&\mbox{for some}\ c_{26}>0,\ \mbox{and all}\ n\geq n_2\ (\mbox{see $H(f)(iii)$}),\nonumber\\
		&\Rightarrow&p\Im_{\lambda_n}(t_nu_n)\leq c_{27}\ \mbox{for some}\ c_{27}>0,\ \mbox{and all}\ n\geq n_2\ (\mbox{see (\ref{eq49})}).
	\end{eqnarray}
	
	Comparing (\ref{eq59}) and (\ref{eq60}), we obtain a contradiction.
	
	We conclude that $\{u_n\}_{n\geq 1}\subseteq W^{1,p}_0(\Omega)$ is bounded and this proves the claim.
	
	On account of the claim, we way assume
	$$u_n\stackrel{w}{\rightarrow}u_*\ \mbox{in}\ W^{1,p}_0(\Omega)\ \mbox{and}\ u_n\rightarrow u_*\ \mbox{in}\ L^r(\Omega).$$
	
	We return to (\ref{eq46}) and use $h=u_n-u_*\in W^{1,p}_0(\Omega)$. Passing to the limit as $n\rightarrow\infty$, we obtain
	\begin{eqnarray}\label{eq61}
		&&\lim\limits_{n\rightarrow\infty}\left\langle A(u_n),u_n-u_*\right\rangle=0,\nonumber\\
		&\Rightarrow&u_n\rightarrow u_*\ \mbox{in}\ W^{1,p}_0(\Omega)\ \mbox{as}\ n\rightarrow\infty\ (\mbox{see Lemma \ref{lem4}}).
	\end{eqnarray}
	
	If in (\ref{eq46}) we pass to the limit as $n\rightarrow\infty$ and use (\ref{eq61}), then we obtain
	\begin{equation}\label{eq62}
		\left\langle A(u_*),h\right\rangle=\int_\Omega j_{\lambda^*}(z,u_*)hdz\ \mbox{for all}\ h\in W^{1,p}_0(\Omega).
	\end{equation}
	
	Recall that
	\begin{eqnarray*}
		&&\bar{u}_{\lambda_1}\leq u_n\ \mbox{for all}\ n\in\NN\ (\mbox{see Proposition \ref{prop13} and \ref{prop14}}),\\
		&\Rightarrow&\bar{u}_{\lambda_1}\leq u_*\ (\mbox{see (\ref{eq61})}),\\
		&\Rightarrow&u_*\in S_{\lambda^*}\subseteq {\rm int}\, C_+\ (\mbox{see (\ref{eq62}) and (\ref{eq42})})\ \mbox{and so}\ \lambda^*\in\mathcal{L}.
	\end{eqnarray*}
The proof is now complete.
\end{proof}

We have proved that
$$\mathcal{L}=\left(0,\lambda^*\right].$$

Next, we show that for $\lambda\in(0,\lambda^*)$, problem \eqref{eqp} has at least two positive smooth solutions.
\begin{prop}\label{prop17}
	If hypotheses $H(a),H(\vartheta),H(f)$ hold and $\lambda\in(0,\lambda^*)$, then problem \eqref{eqp} admits at least two positive solutions
	$$u_0,\hat{u}\in {\rm int}\, C_+,\ u_0\neq\hat{u}.$$
\end{prop}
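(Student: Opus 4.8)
The plan is to exhibit a first positive solution $u_0$ as a local minimizer (in the $W^{1,p}_0(\Omega)$ topology) of the truncated energy functional $\varphi_\lambda$ of Proposition \ref{prop11}, and then to produce a second solution $\hat u\neq u_0$ by a mountain pass argument applied to $\varphi_\lambda$.

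\emph{Step 1: a first solution which is a local minimizer.} Since $\mathcal L=(0,\lambda^*]$ we fix $\eta$ with $\lambda<\eta\leq\lambda^*$ and pick $u_\eta\in S_\eta\subseteq{\rm int}\,C_+$; because $\eta>\lambda$ and $\vartheta\geq0$, $f\geq0$, the functions $\bar u_\lambda\leq\bar u_\eta\leq u_\eta$ (Propositions \ref{prop13}, \ref{prop14}) form an ordered sub/supersolution pair for problem \eqref{eqp}. I would introduce the Carath\'eodory function $\hat\tau_\lambda(z,x)$ obtained from $\tau_\lambda(z,x)$ of Proposition \ref{prop11} by additionally freezing it at $\lambda\vartheta(u_\eta(z))+f(z,u_\eta(z))$ for $x>u_\eta(z)$, set $\hat T_\lambda(z,x)=\int_0^x\hat\tau_\lambda(z,s)\,ds$ and $\hat\varphi_\lambda(u)=\int_\Omega G(Du)\,dz-\int_\Omega\hat T_\lambda(z,u)\,dz$. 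By Proposition \ref{prop5}, $\hat\varphi_\lambda\in C^1(W^{1,p}_0(\Omega),\RR)$, and by Corollary \ref{cor3} together with the double truncation $\hat\varphi_\lambda$ is coercive and sequentially weakly lower semicontinuous, hence has a global minimizer $u_0$. Testing $\hat\varphi_\lambda'(u_0)=0$ with $(\bar u_\lambda-u_0)^+$ and with $(u_0-u_\eta)^+$ and using Lemma \ref{lem4} (as in Propositions \ref{prop11} and \ref{prop15}) gives $u_0\in[\bar u_\lambda,u_\eta]$; hence $u_0\in S_\lambda\subseteq{\rm int}\,C_+$, and $\hat\varphi_\lambda=\varphi_\lambda$ on $[\bar u_\lambda,u_\eta]$.

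\emph{Step 2: $u_0$ lies in the $C^1_0(\overline\Omega)$-interior of $[\bar u_\lambda,u_\eta]$.} Rewriting the equations for $\bar u_\lambda$, $u_0$ and $u_\eta$ with the term $+\hat\xi_\rho x^{p-1}$ added ($\rho=\|u_\eta\|_\infty$, $\hat\xi_\rho$ from $H(f)(v)$), hypotheses $H(f)(iv)$--$(v)$ and $H(\vartheta)(ii)$ yield $f(z,\bar u_\lambda)\prec f(z,u_0)+\hat\xi_\rho(u_0^{p-1}-\bar u_\lambda^{p-1})$ (using $f(z,\cdot)\geq\tilde\eta_s>0$ on $[s,+\infty)$ and $\bar u_\lambda\in{\rm int}\,C_+$) and $0\prec(\eta-\lambda)\vartheta(u_\eta)$. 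Then Proposition \ref{prop7} gives $u_0-\bar u_\lambda\in{\rm int}\,C_+$ and $u_\eta-u_0\in{\rm int}\,C_+$, so $u_0\in{\rm int}_{C^1_0(\overline\Omega)}[\bar u_\lambda,u_\eta]$. Since on this order interval $\varphi_\lambda$ coincides with the coercive functional $\hat\varphi_\lambda$, $u_0$ is a local $C^1_0(\overline\Omega)$-minimizer of $\varphi_\lambda$; by the standard $C^1_0(\overline\Omega)$ versus $W^{1,p}_0(\Omega)$ local minimizer principle for functionals of this type, $u_0$ is then a local $W^{1,p}_0(\Omega)$-minimizer of $\varphi_\lambda$.

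\emph{Step 3: a second solution via the mountain pass theorem.} We may assume $K_{\varphi_\lambda}$ is finite, for otherwise every $u\in K_{\varphi_\lambda}$ satisfies $u\geq\bar u_\lambda$ (test with $(\bar u_\lambda-u)^+$) and hence solves \eqref{eqp}, yielding infinitely many positive solutions in ${\rm int}\,C_+$. Then there is $\rho\in(0,1)$ with $\varphi_\lambda(u_0)<\inf\{\varphi_\lambda(u):\|u-u_0\|=\rho\}=m_\rho$. Two ingredients remain: first, $\varphi_\lambda$ satisfies the C-condition, the boundedness of a Cerami sequence being obtained exactly as in Claim \ref{cl16.1} in the proof of Proposition \ref{prop16} (using $H(a)(iv)$ and the quasimonotonicity hypothesis $H(f)(iii)$, so that $\int_\Omega e_\lambda(z,\cdot)\,dz$ stays bounded, and splitting into the subcases $y\neq0$ and $y\equiv0$), after which strong convergence follows from the $(S)_+$ property in Lemma \ref{lem4}; second, $\varphi_\lambda(t\hat u_1(q))\to-\infty$ as $t\to+\infty$, since for $t$ large $t\hat u_1(q)\geq\bar u_\lambda$ and $\varphi_\lambda(t\hat u_1(q))\leq c_5(1+t^p\|D\hat u_1(q)\|_p^p)-\int_\Omega F(z,t\hat u_1(q))\,dz+c$ with the singular contribution of order $t^{1-\beta}$, while $H(f)(ii)$ and Fatou's lemma give $t^{-p}\int_\Omega F(z,t\hat u_1(q))\,dz\to+\infty$. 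Choosing $t^*>0$ so large that $\|t^*\hat u_1(q)-u_0\|>\rho$ and $\varphi_\lambda(t^*\hat u_1(q))<\varphi_\lambda(u_0)$, the mountain pass theorem (Theorem \ref{th1}) produces $\hat u\in K_{\varphi_\lambda}$ with $\varphi_\lambda(\hat u)\geq m_\rho>\varphi_\lambda(u_0)$, so $\hat u\neq u_0$; and since $\hat u\geq\bar u_\lambda$, $\hat u\in S_\lambda\subseteq{\rm int}\,C_+$ by Proposition \ref{prop11}, which gives the two distinct positive solutions.

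\emph{Main obstacle.} The crux is Step 2: one must check, by two applications of the strong comparison principle Proposition \ref{prop7} at the two ends of the order interval, that the global minimizer of the doubly truncated functional actually lands in the $C^1_0(\overline\Omega)$-interior of $[\bar u_\lambda,u_\eta]$ — this is exactly where the positivity inputs $H(f)(iv)$--$(v)$ and the monotonicity $H(\vartheta)(ii)$ are needed, together with careful bookkeeping of the $\hat\xi_\rho$-perturbation so that the data are strictly ordered in the $\prec$ sense. The C-condition for $\varphi_\lambda$ in Step 3 is the other delicate point, since without the Ambrosetti--Rabinowitz condition it rests entirely on $H(f)(iii)$ and $H(a)(iv)$, reproducing the boundedness argument inside Proposition \ref{prop16}.
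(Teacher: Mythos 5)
Your proposal is correct and follows the same overall strategy as the paper's proof: produce a first positive solution $u_0$ as a minimizer of a doubly truncated functional lying in the $C^1_0(\overline\Omega)$-interior of an order interval (hence a local $W^{1,p}_0(\Omega)$-minimizer of $\varphi_\lambda$ by the $C^1_0$-vs-$W^{1,p}_0$ principle), then a second solution by the mountain pass theorem, with the C-condition recycled from the bounded-Cerami-sequence argument of Proposition~\ref{prop16}. The only substantive deviation is in Step~2. The paper chooses $0<\mu<\lambda<\eta<\lambda^*$ and truncates at $u_\mu\in S_\mu$ and $u_\eta\in S_\eta$, reading off $u_0\in{\rm int}_{C^1_0(\overline\Omega)}[u_\mu,u_\eta]$ directly from the two strict orderings delivered by Proposition~\ref{prop15}; you instead truncate at the purely singular solution $\bar u_\lambda$ below and $u_\eta$ above, and re-derive $u_0-\bar u_\lambda\in{\rm int}\,C_+$ and $u_\eta-u_0\in{\rm int}\,C_+$ by two fresh applications of Proposition~\ref{prop7}. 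Both routes work; the paper's is slightly shorter because Proposition~\ref{prop15} already packages the comparison-principle bookkeeping (including the $\hat\xi_\rho$-perturbation needed for $H(f)(v)$), whereas yours avoids invoking Proposition~\ref{prop15} at the cost of repeating that bookkeeping. Your lower-end comparison ($u_0$ vs $\bar u_\lambda$) relies on $H(f)(iv)$ to get $0\prec f(\cdot,u_0)$, which is fine since $u_0\in{\rm int}\,C_+$ is locally bounded below by a positive constant on compacts; your upper-end comparison gives the even easier $h_2-h_1\geq(\eta-\lambda)\vartheta(\|u_\eta\|_\infty)>0$. One small caveat worth noting, though it is shared by the paper's own use of Proposition~\ref{prop7} inside Proposition~\ref{prop15}: the data $h_2$ in the upper-end comparison contains $(\eta-\lambda)\vartheta(u_\eta)$, which is singular near $\partial\Omega$ and hence not literally in $L^\infty(\Omega)$ as the statement of Proposition~\ref{prop7} requests; one has to accept the same relaxation (to $L^\infty_{\rm loc}$ plus the $\prec$ ordering) that the authors implicitly use.
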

\begin{proof}
	Let $0<\mu<\lambda<\eta<\lambda^*$. According to Proposition \ref{prop15}, there exist $u_0\in S_\lambda\subseteq {\rm int}\, C_+$ and $u_\mu\in S_\mu\subseteq {\rm int}\, C_+$, $u_\eta\in S_\eta\subseteq {\rm int}\, C_+$ such that
	\begin{eqnarray}\label{eq63}
		&&u_0-u_\mu\in {\rm int}\, C_+\ \mbox{and}\ u_\mu-u_0\in {\rm int}\, C_+,\nonumber\\
		&\Rightarrow&u_0\in {\rm int}_{C^1_0(\overline{\Omega})}[u_\mu,u_\eta].
	\end{eqnarray}
	
	We introduce the following truncations of the reaction in problem \eqref{eqp}:
	\begin{eqnarray}
		&&i_\lambda(z,x)=\left\{\begin{array}{ll}
			\lambda\vartheta(u_\mu(z))+f(z,u_\mu(z))&\mbox{if}\ x\leq u_\mu(z)\\
			\lambda\vartheta(x)+f(z,x)&\mbox{if}\ u_\mu(z)<x
		\end{array}\right.\label{eq64}\\
		&\mbox{and}&\hat{i}_\lambda(z,x)=\left\{\begin{array}{ll}
			i_\lambda(z,x)&\mbox{if}\ x\leq u_\eta(z)\\
			i_\lambda(z,\eta(z))&\mbox{if}\ u_\eta(z)<x.
		\end{array}\right.\label{eq65}
	\end{eqnarray}
	
	Both are Carath\'eodory functions. We set
	$$I_\lambda(z,x)=\int^x_0 i_\lambda(z,s)ds\ \mbox{and}\ \hat{I}_\lambda(z,x)=\int^x_0\hat{i}_\lambda(z,s)ds$$
	and consider the $C^1$-functionals $\sigma_\lambda,\hat{\sigma}_\lambda:W^{1,p}_0(\Omega)\rightarrow\RR$ defined by
	\begin{eqnarray*}
		&&\sigma_\lambda(u)=\int_\Omega G(Du)dz-\int_\Omega I_\lambda(z,u)dz\\
		&\mbox{and}&\hat{\sigma}_\lambda(u)=\int_\Omega G(Du)dz-\int_\Omega\hat{I}_\lambda(z,u)dz\ \mbox{for all}\ u\in W^{1,p}_0(\Omega).
	\end{eqnarray*}
	
	Using (\ref{eq64}) and (\ref{eq65}), as in the proof of Proposition \ref{prop15}, we show that
	\begin{equation}\label{eq66}
		K_{\sigma_\lambda}\subseteq\left[u_\mu\right)\cap {\rm int}\, C_+\ \mbox{and}\ K_{\hat{\sigma}_\lambda}\subseteq[u_\mu,u_\eta]\cap {\rm int}\, C_+.
	\end{equation}
	
	Evidently, we may assume that
	\begin{equation}\label{eq67}
		K_{\sigma_\lambda}\cap[u_\mu,u_\eta]=\{u_0\}.
	\end{equation}
	
	Otherwise, from (\ref{eq66}) and (\ref{eq64}) we see that we already have a second positive smooth solution and so we are done. From Corollary \ref{cor3} and (\ref{eq65}), it is clear that $\hat{\sigma}_\lambda(\cdot)$ is coercive. Also by the Sobolev embedding theorem, $\hat{\sigma}(\cdot)$ is sequentially weakly lower semicontinuous. So, we can find $\hat{u}_0\in W^{1,p}_0(\Omega)$ such that
	\begin{eqnarray*}
		&&\hat{\sigma}_\lambda(\hat{u}_0)=\inf\{\hat{\sigma}_\lambda(u):u\in W^{1,p}_0(\Omega)\},\\
		&\Rightarrow&\hat{u}_0\in K_{\hat{\sigma}_\lambda}.
	\end{eqnarray*}
	
	Note that $\sigma'_\lambda=\hat{\sigma}'_\lambda$ on $[u_\mu,u_\eta]$ (see (\ref{eq64}) and (\ref{eq65})). Therefore from (\ref{eq66}) and (\ref{eq67}) it follows that
	\begin{eqnarray}\label{eq68}
		&&\hat{u}_0=u_0,\nonumber\\
		&\Rightarrow&u_0\ \mbox{is a local}\ C^1_0(\overline{\Omega})-\mbox{minimizer of}\ \sigma_\lambda(\cdot)\nonumber\\
		&&(\mbox{see (\ref{eq63}) and note that}\ \sigma_\lambda=\hat{\sigma}_\lambda\ \mbox{on}\ [u_\mu,u_\eta]),\nonumber\\
		&\Rightarrow&u_0\ \mbox{is a local}\ W^{1,p}_0(\Omega)-\mbox{minimizer of}\ \sigma_\lambda(\cdot)\\
		&&(\mbox{see Papageorgiou \& R\u{a}dulescu \cite[Proposition 8]{25}}).\nonumber
	\end{eqnarray}
	
	From (\ref{eq64}) and (\ref{eq66}), it is clear that we may assume that
	\begin{equation}\label{eq69}
		K_{\sigma_\lambda}\ \mbox{is finite.}
	\end{equation}
	Otherwise on account of (\ref{eq66}), we see that we already have an infinity of positive smooth solutions and so we are done. Then from (\ref{eq68}) and (\ref{eq69}) it follows that we can find small enough $\rho\in(0,1)$ such that
	\begin{equation}\label{eq70}
		\sigma_\lambda(u_0)<\inf\{\sigma_\lambda(u):||u-u_0||=\rho\}=m_\lambda
	\end{equation}
	(see Aizicovici, Papageorgiou \& Staicu \cite{1}, proof of Proposition 29). Also, on account of hypotheses $H(f)(ii),(iii)$ for $u\in {\rm int}\, C_+$, we have
	\begin{equation}\label{eq71}
		\sigma_\lambda(tu)\rightarrow-\infty\ \mbox{as}\ t\rightarrow+\infty.
	\end{equation}
	
	We will show that
	\begin{equation}\label{eq72}
		\sigma_\lambda(\cdot)\ \mbox{satisfies the C-condition}.
	\end{equation}
	
	To this end, we consider a sequence $\{u_n\}_{n\geq 1}\subseteq W^{1,p}_0(\Omega)$ such that
	\begin{eqnarray}
		&&|\sigma_\lambda(u_n)|\leq c_{28}\ \mbox{for some}\ c_{28}>0,\ \mbox{all}\ n\in\NN,\label{eq73}\\
		&&(1+||u_n||)\sigma'_\lambda(u_n)\rightarrow 0\ \mbox{in}\ W^{-1,p'}(\Omega)\ \mbox{as}\ n\rightarrow\infty.\label{eq74}
	\end{eqnarray}
	
	From (\ref{eq74}) we have
	\begin{eqnarray}\label{eq75}
		&&\left|\left\langle A(u_n),h\right\rangle-\int_\Omega i_\lambda(z,u_n)hdz\right|\leq\frac{\epsilon_n||h||}{1+||u_n||}\\
		&&\mbox{for all}\ h\in W^{1,p}_0(\Omega),\ \mbox{with}\ \epsilon_n\rightarrow 0^+.\nonumber
	\end{eqnarray}
	
	In (\ref{eq75}) we choose $h=-u^-_n\in W^{1,p}_0(\Omega)$ and obtain
	\begin{eqnarray}\label{eq76}
		&&\frac{c_1}{p-1}||Du^-_n||^{p-1}_p\leq c_{29}\ \mbox{for some}\ c_{29}>0,\ \mbox{and all}\ n\in\NN\ (\mbox{see Lemma \ref{lem2} and (\ref{eq64})}),\nonumber\\
		&\Rightarrow&\{u^-_n\}_{n\geq 1}\subseteq W^{1,p}_0(\Omega)\ \mbox{is bounded}.
	\end{eqnarray}
	
	From (\ref{eq73}) and (\ref{eq76}) it follows that
	\begin{equation}\label{eq77}
		\int_\Omega pG(Du^+_n)dz-\int_\Omega pI_\lambda(z,u^+_n)dz\leq c_{30}\ \mbox{for some}\ c_{30}>0,\ \mbox{and all}\ n\in\NN.
	\end{equation}
	
	In (\ref{eq75}) we choose $h=u^+_n\in W^{1,p}_0(\Omega)$. Then
	\begin{equation}\label{eq78}
		-\int_\Omega(a(Du^+_n),Du^+_n)_{\RR^N}dz+\int_\Omega i_\lambda(z,u^+_n)u^+_ndz\leq\epsilon_n\ \mbox{for all}\ n\in\NN.
	\end{equation}
	
	We add (\ref{eq77}) and (\ref{eq78}) and we use hypothesis $H(a)(iv)$. We obtain
	\begin{eqnarray}\label{eq79}
		&&\int_\Omega[i_\lambda(z,u^+_n)u^+_n-pI_\lambda(z,u^+_n)]dz\leq c_{31}\ \mbox{for some}\ c_{31}>0,\ \mbox{and all}\ n\in\NN\nonumber\\
		&\Rightarrow&\int_\Omega e_\lambda(z,u^+_n)dz\leq c_{32}\ \mbox{for some}\ c_{32}>0,\ \mbox{and all}\ n\in\NN\ (\mbox{see (\ref{eq64})}).
	\end{eqnarray}
	
	Using (\ref{eq79}) and reasoning as in the proof of Proposition \ref{prop16} (see the claim of the proof of Proposition \ref{prop16}), we show that
	\begin{eqnarray*}
		&&\{u^+_n\}_{n\geq 1}\subseteq W^{1,p}_0(\Omega)\ \mbox{is bounded},\\
		&\Rightarrow&\{u_n\}_{n\geq 1}\subseteq W^{1,p}_0(\Omega)\ \mbox{is bounded (see (\ref{eq76}))}.
	\end{eqnarray*}
	
	From this, as in the proof of Proposition \ref{prop16}, using the $(S)_+$-property of $A(\cdot)$ (see Lemma \ref{lem4}), we conclude that (\ref{eq72}) is true.
	
	On account of (\ref{eq70}), (\ref{eq71}), (\ref{eq72}), we can use Theorem \ref{th1} (mountain pass theorem). So, we can find $\hat{u}\in W^{1,p}_0(\Omega)$ such that
	\begin{equation}\label{eq80}
		\left\{\begin{array}{l}
			\hat{u}\in K_{\lambda}\subseteq\left[u_\mu\right)\cap {\rm int}\, C_+\ (\mbox{see (\ref{eq66})}),\\
			\sigma_\lambda(u_0)<m_\lambda\leq\sigma_\lambda(\hat{u})\ (\mbox{see (\ref{eq70})}).
		\end{array}\right\}
	\end{equation}
	
	From (\ref{eq80}) and (\ref{eq64}), we conclude that
	$$\hat{u}\in S_\lambda\subseteq {\rm int}\, C_+,\ \hat{u}\neq u_0.$$
The proof is now complete.
\end{proof}

\section{The minimal positive solution}

In this section we show that for every $\lambda\in\mathcal{L}=\left(0,\lambda^*\right]$ problem \eqref{eqp} has a smallest positive solution $u^*_\lambda$ and we examine the monotonicity and continuity properties of the map $\lambda\mapsto u^*_\lambda$.

First, we show that $S_\lambda$ is downward directed. This means that if $u_1,u_2\in S_\lambda$, then we can find $u\in S_\lambda$ such that $u\leq u_1,u\leq u_2$. Our proof is inspired by the proof of Lemma 4.1 of Filippakis \& Papageorgiou \cite{7}, where a similar result is proved for the upper solutions of nonlinear Dirichlet problems without singularities, driven by the $p$-Laplacian (see also Papageorgiou, R\u{a}dulescu \& Repov\v{s} \cite{26, 26bis}, for Robin problems).
\begin{prop}\label{prop18}
	If hypotheses $H(a),H(\vartheta), H(f)$ hold and $\lambda\in\mathcal{L}=\left(0,\lambda^*\right]$, then $S_\lambda$ is downward directed.
\end{prop}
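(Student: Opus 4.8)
The plan is to take $u_1,u_2\in S_\lambda$ and produce a solution $u\in S_\lambda$ with $u\le u_1$ and $u\le u_2$. Set $m=\min\{u_1,u_2\}\in W^{1,p}_0(\Omega)$; by the regularity theory both $u_i\in\operatorname{int}C_+$, so $m(z)>0$ for all $z\in\Omega$ and, by Proposition~\ref{prop13}, $\bar u_\lambda\le u_i$, hence $\bar u_\lambda\le m$. I would introduce a truncation of the reaction at the "obstacle" $m$, namely
\[
	\beta_\lambda(z,x)=\left\{
	\begin{array}{ll}
		\lambda\vartheta(\bar u_\lambda(z))+f(z,\bar u_\lambda(z)) & \mbox{if }x<\bar u_\lambda(z)\\[1mm]
		\lambda\vartheta(x)+f(z,x) & \mbox{if }\bar u_\lambda(z)\le x\le m(z)\\[1mm]
		\lambda\vartheta(m(z))+f(z,m(z)) & \mbox{if }m(z)<x,
	\end{array}\right.
\]
which is a Carathéodory function since $m\in\operatorname{int}C_+$ keeps $\vartheta(\cdot)$ away from its singularity. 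With $B_\lambda(z,x)=\int_0^x\beta_\lambda(z,s)\,ds$ I consider the $C^1$-functional $\psi_\lambda(u)=\int_\Omega G(Du)\,dz-\int_\Omega B_\lambda(z,u)\,dz$ on $W^{1,p}_0(\Omega)$; Corollary~\ref{cor3} and the boundedness of the truncation make $\psi_\lambda$ coercive and sequentially weakly lower semicontinuous, so it attains its infimum at some $u\in W^{1,p}_0(\Omega)$, giving $\psi'_\lambda(u)=0$, i.e. $\langle A(u),h\rangle=\int_\Omega\beta_\lambda(z,u)h\,dz$ for all $h\in W^{1,p}_0(\Omega)$.

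The next step is the two-sided comparison confining $u$ to $[\bar u_\lambda,m]$. Testing with $h=(\bar u_\lambda-u)^+$ and using $f\ge0$ together with the equation for $\bar u_\lambda$ (problem \eqref{eq15}), exactly as in \eqref{eq26} and \eqref{eq34}, gives $\bar u_\lambda\le u$. For the upper bound I test with $h=(u-m)^+$ and split $\Omega$ according to whether $u_1\le u_2$ or $u_2\le u_1$; on $\{u_1\le u_2\}$ one has $m=u_1$ and
\[
	\langle A(u),(u-m)^+\rangle=\int_{\{u_1\le u_2\}\cap\{u>m\}}[\lambda\vartheta(u_1)+f(z,u_1)](u-u_1)^+dz
	=\langle A(u_1),(u-m)^+\rangle_{\{u_1\le u_2\}},
\]
and symmetrically on $\{u_2<u_1\}$; since $A(u_i)$ equals the reaction evaluated at $u_i$ and $m$ agrees with $u_i$ on the respective piece, comparing $A(u)$ with $A(m)$ through Lemma~\ref{lem4} (applied piecewise, as in Filippakis \& Papageorgiou \cite{7}) yields $u\le m$. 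Hence $u\in[\bar u_\lambda,m]$, and on this order interval $\beta_\lambda(z,x)=\lambda\vartheta(x)+f(z,x)$, so $u$ solves \eqref{eqp}; by the nonlinear regularity theory and $\bar u_\lambda\le u$ we get $u\in S_\lambda\subseteq\operatorname{int}C_+$, and $u\le m\le u_1$, $u\le m\le u_2$.

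The delicate point is the argument that $u\le m=\min\{u_1,u_2\}$, because $m$ is only Lipschitz, not $C^1$, so one cannot simply invoke a strong comparison principle or a single application of monotonicity of $A$; instead one must integrate the weak formulations over the two measurable sets $\{u_1\le u_2\}$ and $\{u_2<u_1\}$ separately and recombine, checking that the cross terms have the right sign. This is precisely the technical device of \cite[Lemma~4.1]{7}, which carries over here verbatim once one notes that the extra singular term $\lambda\vartheta(\cdot)$ causes no trouble on $[\bar u_\lambda,m]$ (it is bounded there) and that hypothesis $H(\vartheta)(ii)$ keeps all the relevant inequalities monotone. Everything else — coercivity, the lower bound by $\bar u_\lambda$, and the regularity upgrade — is routine and parallels the proofs of Propositions~\ref{prop11} and \ref{prop15}.
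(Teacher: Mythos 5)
Your plan is structurally right (truncate at $\bar u_\lambda$ and $m=\min\{u_1,u_2\}$, minimize, then squeeze the minimizer into the order interval), and you correctly identify Filippakis--Papageorgiou \cite[Lemma~4.1]{7} as the technical device and the upper comparison $u\le m$ as the delicate point. But the way you propose to carry out that comparison does not work, and the missing idea is precisely where the paper spends almost all of its effort. You suggest testing the equation for the minimizer $u$ with $(u-m)^+$ and then ``splitting'' the identity over $\{u_1\le u_2\}$ and $\{u_2<u_1\}$ and ``comparing $A(u)$ with $A(m)$ through Lemma~\ref{lem4} applied piecewise.'' This is not legitimate: the weak formulation for $u_1$ (resp.\ $u_2$) is an integral identity over all of $\Omega$ tested against functions in $W^{1,p}_0(\Omega)$, and one cannot restrict it to the measurable set $\{u_1\le u_2\}$ (the characteristic function of that set times a Sobolev function is not an admissible test function). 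Likewise Lemma~\ref{lem4} gives monotonicity of $A$ as an operator on $W^{1,p}_0(\Omega)$, not ``piecewise'' on a partition of $\Omega$. As written, the chain of equalities in your display — in particular the claimed identity $\langle A(u),(u-m)^+\rangle=\langle A(u_1),(u-m)^+\rangle_{\{u_1\le u_2\}}$ — is not an identity at all, and without it the argument that $(u-m)^+=0$ does not close.

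What is actually needed (and what the paper establishes in \eqref{eq81}--\eqref{eq91} using the Lipschitz cut-off $\eta_\epsilon$, the test functions $\eta_\epsilon((u_1-u_2)^-)y$ and $(1-\eta_\epsilon((u_1-u_2)^-))y$ for $y\in C^1_c(\Omega)$, $y\ge0$, the monotonicity of $a(\cdot)$, and Stampacchia's theorem) is the statement that $\tilde u=\min\{u_1,u_2\}$ is a \emph{weak supersolution} of problem~\eqref{eqp}:
\begin{equation*}
\int_\Omega\bigl(a(D\tilde u),Dy\bigr)_{\RR^N}\,dz\;\ge\;\int_\Omega\bigl[\lambda\vartheta(\tilde u)+f(z,\tilde u)\bigr]y\,dz\quad\text{for all }y\in W^{1,p}_0(\Omega),\ y\ge0.
\end{equation*}
Once this is in hand, your truncation argument works exactly as in Proposition~\ref{prop15}: testing with $(u-\tilde u)^+$ and using the supersolution inequality on the right turns the comparison into a single global application of the monotonicity of $A$, giving $u\le\tilde u$. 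So the conceptual omission in your proposal is the derivation of the supersolution property of the pointwise minimum; that is the content of the Filippakis--Papageorgiou device, and it is not a one-line ``piecewise'' manipulation of the weak formulations but a genuine regularization argument that you need to reproduce (or at least cite as producing the displayed inequality) before the rest of your argument can run.
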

\begin{proof}
	Let $u_1,u_2\in S_\lambda$ and $\epsilon>0$. We introduce the following cut-off frunction
	$$\eta_\epsilon(x)=\left\{\begin{array}{ll}
		-1&\mbox{if}\ x<-\epsilon\\
		\frac{x}{\epsilon}&\mbox{if}\ -\epsilon\leq x\leq \epsilon\\
		1&\mbox{if}\ \epsilon<x.
	\end{array}\right.$$
	
	Evidently, $\eta_\epsilon:\RR\rightarrow\RR$ is Lipschitz continuous. So, we have
	\begin{equation}\label{eq81}
		\left\{\begin{array}{l}
			\eta_\epsilon((u_1-u_2)^-(\cdot))\in W^{1,p}_0(\Omega),\\
			\nabla\eta_\epsilon((u_1-u_2)^-)=\eta'_\epsilon((u_1-u_2)^-)D((u_1-u_2)^-)
		\end{array}\right\}
	\end{equation}
	(see, for example, Gasinski \& Papageorgiou \cite{9}, Proposition 2.4.25 and Remark 2.4.26, p.~195).

Let $y\in C^1_0(\Omega)=\{y\in C^1(\Omega):y\ \mbox{has compact support in}\ \Omega\}$, $y\geq 0$. Then
	\begin{equation}\label{eq82}
		\left\{\begin{array}{l}
			\eta_\epsilon((u_1-u_2)^-)y\in W^{1,p}_0(\Omega),\\
			D(\eta((u_1-u_2)^-)y)=yD(\eta((u_1-u_2)^-))+\eta_\epsilon((u_1-u_2)^-)Dy
		\end{array}\right\}
	\end{equation}
	(see Proposition 2.4.22 of Gasinski \& Papageorgiou \cite[p. 193]{9}).	
	Since $u_1,u_2\in S_\lambda\subseteq {\rm int}\, C_+$, we have
	\begin{eqnarray}
		&&\int_\Omega(a(Du_1),Dh)_{\RR^N}dz=\int_\Omega[\lambda\vartheta(u_1)+f(z,u_1)]hdz,\label{eq83}\\
		&&\int_\Omega(a(Du_2),Dh)_{\RR^N}dz=\int_\Omega[\lambda\vartheta(u_2)+f(z,u_2)]hdz\ \mbox{for all}\ h\in W^{1,p}_0(\Omega).\label{eq84}
	\end{eqnarray}
	
	In (\ref{eq83}) we choose $h=\eta_\epsilon((u_1-u_2)^-)y\in W^{1,p}_0(\Omega)$ and in (\ref{eq84}) we choose $h=[1-\eta_\epsilon((u_1-u_2)^-)]y\in W^{1,p}_0(\Omega)$. We add the two equalities and obtain
	\begin{eqnarray}\label{eq85}
		&&\int_\Omega(a(Du),D(\eta_\epsilon((u_1-u_2)^-)y))_{\RR^n}dz+\int_\Omega(a(Du_2),D((1-\eta_\epsilon((u_1-u_2)^-))y))_{\RR^N}dz\nonumber\\
		&=&\int_\Omega[\lambda\vartheta(u_1)+f(z,u_1)]\eta_\epsilon((u_1-u_2)^-)ydz+\nonumber\\
		&&\int_\Omega[\lambda\vartheta(u_2)+f(z,u_2)](1-\eta_\epsilon((u_1-u_2)^-))ydz.
	\end{eqnarray}
	
	We examine the two summands on the left-hand side of (\ref{eq85}). We have
	\begin{eqnarray}\label{eq86}
		&&\int_\Omega(a(Du_1),D(\eta_\epsilon((u_1-u_2)^-)y))_{\RR^N}dz=\nonumber\\
		&&\int_\Omega y(a(Du_1),D(\eta_\epsilon((u_1-u_2)^-)))_{\RR^N}dz+\int_\Omega\eta_\epsilon((u_1-u_2)^-)(a(Du_1),Dy)_{\RR^N}dz=\nonumber\\
		&&-\int_{\{-\epsilon\leq u_1-u_2\leq 0\}}y(a(Du_1),D(u_1-u_2))_{\RR^N}dz+\int_\Omega\eta_\epsilon((u_1-u_2)^-)(a(Du_1),Dy)_{\RR^N}dz.
	\end{eqnarray}
	
	Similarly we have
	\begin{eqnarray}\label{eq87}
		&&\int_\Omega(a(Du_2),D((1-\eta_\epsilon((u_1-u_2)^-))y))_{\RR^N}dz\nonumber\\
		&=&\int_\Omega y(a(Du_2),D(u_1-u_2))_{\RR^N}dz+\int_\Omega[1-\eta_\epsilon((u_1-u_2)^-)](a(Du_2),Dy)_{\RR^N}dz.
	\end{eqnarray}
	
	We use (\ref{eq86}) and (\ref{eq87}) in (\ref{eq85}). Since $y\geq 0$ and exploiting the monotonicity of $a(\cdot)$, we obtain
	\begin{eqnarray}\label{eq88}
		&&\int_\Omega\eta_\epsilon((u_1-u_2)^-)(a(Du_1),Dy)_{\RR^N}dz+\int_\Omega[1-\eta_\epsilon((u_1-u_2)^-)](a(Du_2),Dy)_{\RR^N}dz\nonumber\\
		&\geq&\int_\Omega[\lambda\vartheta(u_1)+f(z,u_1)]\eta_\epsilon((u_1-u_2)^-)dz+\nonumber\\
		&&\hspace{1cm}\int_\Omega[\lambda\vartheta(u_2)+f(z,u_2)](1-\eta_\epsilon((u_1-u_2)^-))dz.
	\end{eqnarray}
	
	Note that
	$$\eta_\epsilon((u_1-u_2)^-(z))\rightarrow\chi_{\{u_1<u_2\}}(z)\ \mbox{for almost all}\ z\in\Omega,\ \mbox{as}\ \epsilon\rightarrow 0^+.$$
	
	So, if in (\ref{eq88}) we let $\epsilon\rightarrow 0^+$, then
	\begin{eqnarray}\label{eq89}
		&&\int_{\{u_1<u_2\}}(a(Du_1),Dy)_{\RR^N}dz+\int_{\{u_2\leq u_1\}}(a(Du_2),Dy)_{\RR^N}dz\nonumber\\
		&\geq&\int_{\{u_1<u_2\}}[\lambda\vartheta(u_1)+f(z,u_1)]ydz+\int_{\{u_2\leq u_1\}}[\lambda\vartheta(u_2)+f(z,u_2)]udz.
	\end{eqnarray}
	
	If $\tilde{u}=\min\{u_1,u_2\}\in W^{1,p}_0(\Omega)$, then we know that
	\begin{equation}\label{eq90}
		D\tilde{u}=\left\{\begin{array}{l}
			Du_1\ \mbox{for almost all}\ z\in\{u_1<u_2\}\\
			Du_2\ \mbox{for almost all}\ z\in\{u_2\leq u_1\}.
		\end{array}\right.
	\end{equation}
	
	Moreover, by Stampacchia's theorem we have that
	$$D(u_2-u_1)=0\ \mbox{for almost all}\ z\in\{u_1=u_2\}$$
	(see Gasinski \& Papageorgiou \cite[pp. 195-196]{9}).

From (\ref{eq89}) and (\ref{eq90}) it follows that
	$$\int_\Omega(a(D\tilde{u}),Dy)_{\RR^N}dz\geq\int_\Omega[\lambda\vartheta(\tilde{u})+f(z,\tilde{u})]ydz\ \mbox{for all}\ y\in C^1_0(\Omega),\ y\geq 0.$$
	
	The density of such $y$'s in the positive cone of $W^{1,p}_0(\Omega)$ (that is, in the set $W_+=\{u\in W^{1,p}_0(\Omega):u(z)\geq 0\ \mbox{for almost all}\ z\in\Omega\}$, see Gasinski \& Papageorgiou \cite[Proposition 1.154]{10}), implies that
	\begin{eqnarray}\label{eq91}
		&&\int_\Omega(a(D\tilde{u}),Dy)_{\RR^N}dz\geq\int_\Omega[\lambda\vartheta(\tilde{u})+f(z,\tilde{u})]ydz\\
		&&\mbox{for all}\ y\in W^{1,p}_0(\Omega)\ \mbox{with}\ y\geq 0.\nonumber
	\end{eqnarray}
	
	We consider the following truncation of the reaction in problem \eqref{eqp}:
	\begin{equation}\label{eq92}
		\mu_\lambda(z,x)=\left\{\begin{array}{ll}
			\lambda\vartheta(\bar{u}_\lambda(z))+f(z,\bar{u}_\lambda(z))&\mbox{if}\ x<\bar{u}_\lambda(z)\\
			\lambda\vartheta(x)+f(z,x)&\mbox{if}\ \bar{u}_\lambda(z)\leq x\leq \tilde{u}(z)\\
			\lambda\vartheta(\tilde{u}(z))+f(z,\tilde{u}(z))&\mbox{if}\ \tilde{u}(z)<x.
		\end{array}\right.
	\end{equation}
	
	This is a Carath\'eodory function. We set $M_\lambda(z,x)=\int^x_0\mu_\lambda(z,s)ds$ and consider the $C^1$-functional (see Proposition \ref{prop5}) $\tau_\lambda:W^{1,p}_0(\Omega)\rightarrow\RR$ defined by
	$$\tau_\lambda(v)=\int_\Omega G(Dv)dz-\int_\Omega M_\lambda(z,v)dz\ \mbox{for all}\ v\in W^{1,p}_0(\Omega).$$
	
	As before (see the proof of Proposition \ref{prop15}), using (\ref{eq92}), we can show that
	\begin{equation}\label{eq93}
		K_{\tau_\lambda}\subseteq[\bar{u}_\lambda,\tilde{u}]\cap {\rm int}\, C_+.
	\end{equation}
	
	From (\ref{eq92}) it is clear that $\tau_\lambda(\cdot)$ is coercive. In addition, $\tau_\lambda(\cdot)$ is sequentially weakly lower semicontinuous. So, we can find $u\in W^{1,p}_0(\Omega)$ such that
	\begin{eqnarray*}
		&&\tau_\lambda(u)=\inf\{\tau_\lambda(v):v\in W^{1,p}_0(\Omega)\},\\
		&\Rightarrow&u\in K_{\tau_\lambda}\subseteq[\bar{u}_\lambda,\tilde{u}]\cap {\rm int}\, C_+,\\
		&\Rightarrow&u\in S_\lambda\subseteq {\rm int}\, C_+\ \mbox{and}\ u\leq \tilde{u}\leq u_1,u_2.
	\end{eqnarray*}
	
	This proves that $S_\lambda$ is downward directed.
\end{proof}

Using this proposition, we can show the existence of a minimal positive solution for problem \eqref{eqp}.
\begin{prop}\label{prop19}
	If hypotheses $H(a),H(\vartheta),H(f)$ hold and $\lambda\in\mathcal{L}=\left(0,\lambda^*\right]$, then problem \eqref{eqp} admits a smallest positive solution $u^*_\lambda\in S_\lambda\subseteq {\rm int}\, C_+$ (that is, $u^*_\lambda\leq u$ for all $u\in S_\lambda$).
\end{prop}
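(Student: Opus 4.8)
The plan is to exploit the downward directedness of $S_\lambda$ established in Proposition \ref{prop18}. Since $W^{1,p}_0(\Omega)$ is separable, a standard fact about downward directed subsets of an ordered Banach space (see, e.g., Gasinski \& Papageorgiou \cite{10}) produces a decreasing sequence $\{u_n\}_{n\geq1}\subseteq S_\lambda$ (that is, $u_{n+1}\leq u_n$ for all $n\in\NN$) such that $\inf_n u_n(z)=\inf\{u(z):u\in S_\lambda\}$ for almost all $z\in\Omega$. By Proposition \ref{prop13} we have $\bar{u}_\lambda\leq u_n$ for all $n$, while $u_n\leq u_1$ by monotonicity; hence $\{u_n\}_{n\geq1}$ is order-bounded, $||u_n||_\infty\leq||u_1||_\infty$, and $u_n(z)\downarrow u^*_\lambda(z)$ pointwise, where $u^*_\lambda$ is measurable with $\bar{u}_\lambda\leq u^*_\lambda\leq u_1$.

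Next I would establish that $\{u_n\}_{n\geq1}$ is bounded in $W^{1,p}_0(\Omega)$ and converges strongly to $u^*_\lambda$. Each $u_n\in S_\lambda\subseteq {\rm int}\,C_+$ satisfies
\[
	\langle A(u_n),h\rangle=\int_\Omega[\lambda\vartheta(u_n)+f(z,u_n)]h\,dz\quad\mbox{for all }h\in W^{1,p}_0(\Omega).
\]
Taking $h=u_n$ and using Lemma \ref{lem2}(c), the left-hand side dominates $\frac{c_1}{p-1}||u_n||^p$; on the right, $f(\cdot,u_n)u_n$ is bounded in $L^1(\Omega)$ by $H(f)(i)$ and the $L^\infty$-bound, while $0\leq\vartheta(u_n)u_n\leq\vartheta(\bar{u}_\lambda)u_1$, which lies in $L^1(\Omega)$ since $\bar{u}_\lambda,u_1\in {\rm int}\,C_+$. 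Thus $\{u_n\}_{n\geq1}$ is bounded, and along a subsequence $u_n\xrightarrow{w}u^*_\lambda$ in $W^{1,p}_0(\Omega)$ and $u_n\to u^*_\lambda$ in $L^r(\Omega)$. Testing the equation for $u_n$ with $h=u_n-u^*_\lambda$, the perturbation term tends to $0$ because $f(\cdot,u_n)$ is $L^\infty$-bounded and $u_n-u^*_\lambda\to0$ in $L^1(\Omega)$, and the singular term tends to $0$ by dominated convergence, using $\vartheta(u_n)|u_n-u^*_\lambda|\leq2\vartheta(\bar{u}_\lambda)u_1\in L^1(\Omega)$ together with $u_n-u^*_\lambda\to0$ a.e. Hence $\limsup_{n\to\infty}\langle A(u_n),u_n-u^*_\lambda\rangle\leq0$, and the $(S)_+$-property of $A$ (Lemma \ref{lem4}) gives $u_n\to u^*_\lambda$ in $W^{1,p}_0(\Omega)$.

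Then I would pass to the limit in the weak formulation. With the strong convergence $u_n\to u^*_\lambda$, the convergence $f(\cdot,u_n)h\to f(\cdot,u^*_\lambda)h$ in $L^1(\Omega)$ and, via the domination $\vartheta(u_n)\leq\vartheta(\bar{u}_\lambda)$ with $\bar{u}_\lambda\in {\rm int}\,C_+$, the convergence $\vartheta(u_n)h\to\vartheta(u^*_\lambda)h$ in $L^1(\Omega)$ for each $h\in W^{1,p}_0(\Omega)$, we obtain
\[
	\langle A(u^*_\lambda),h\rangle=\int_\Omega[\lambda\vartheta(u^*_\lambda)+f(z,u^*_\lambda)]h\,dz\quad\mbox{for all }h\in W^{1,p}_0(\Omega).
\]
Since $\bar{u}_\lambda\leq u^*_\lambda$ and $\bar{u}_\lambda\in {\rm int}\,C_+$, we have $u^*_\lambda\neq0$, so $u^*_\lambda\in S_\lambda\subseteq {\rm int}\,C_+$ by Proposition \ref{prop11}. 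Minimality is then immediate from the construction: for every $u\in S_\lambda$ and almost all $z\in\Omega$, $u^*_\lambda(z)=\inf_n u_n(z)=\inf\{v(z):v\in S_\lambda\}\leq u(z)$, hence $u^*_\lambda\leq u$.

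The step I expect to be the main obstacle is the careful handling of the singular term $\vartheta(u_n)$ in the two limit passages (the $(S)_+$-argument and the identification of the limit equation). The device that makes it work is the uniform lower bound $\bar{u}_\lambda\leq u_n$ from Proposition \ref{prop13}: it keeps the singularity uniformly away from $0$ on compact subsets of $\Omega$ and, since $\bar{u}_\lambda\in {\rm int}\,C_+$ behaves like $\mathrm{dist}(\cdot,\partial\Omega)$ near the boundary, it yields the integrable domination $\vartheta(u_n)\leq\vartheta(\bar{u}_\lambda)$ needed to apply the dominated convergence theorem against $W^{1,p}_0(\Omega)$-test functions.
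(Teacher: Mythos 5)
Your proposal is correct and follows essentially the same route as the paper: extract a decreasing sequence from the downward directed set $S_\lambda$ (the paper cites Lemma 3.10 of Hu \& Papageorgiou \cite{15} for this step), use the order bounds $\bar{u}_\lambda\leq u_n\leq u_1$ from Proposition \ref{prop13} together with $H(\vartheta)(ii)$ and $H(f)(i)$ to get boundedness in $W^{1,p}_0(\Omega)$, pass to the limit via the $(S)_+$-property of $A$, and identify the limit as an element of $S_\lambda$. Your explicit dominated-convergence treatment of the singular term (domination by $\vartheta(\bar{u}_\lambda)\in L^1$ via the $\bar{u}_\lambda\in{\rm int}\,C_+$ lower bound) spells out what the paper leaves implicit, but it is the same argument.
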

\begin{proof}
	On account of Proposition \ref{prop18} and using Lemma 3.10 of Hu \& Papageorgiou \cite[p. 178]{15}, we can find $\{u_n\}_{n\geq 1}\subseteq S_\lambda\subseteq {\rm int}\, C_+$ decreasing such that
	$$\inf S_\lambda=\inf\limits_{n\geq 1}u_n.$$
	
	We have
	\begin{eqnarray}
		&&\left\langle A(u_n),h\right\rangle=\int_\Omega[\lambda\vartheta(u_n)+f(z,u_n)]hdz\ \mbox{for all}\ h\in W^{1,p}_0(\Omega),\label{eq94}\\
		&&0\leq\bar{u}_\lambda\leq u_n\leq u_1\ \mbox{for all}\ n\in\NN\ (\mbox{see Proposition \ref{prop13}}).\label{eq95}
	\end{eqnarray}
	
	Choosing $h=u_n\in W^{1,p}_0(\Omega)$ in (\ref{eq94}) and using (\ref{eq95}) and hypotheses $H(\vartheta)(ii)$ and $H(f)(i)$, we can infer that
	$$\{u_n\}_{n\geq 1}\subseteq W^{1,p}_0(\Omega)\ \mbox{is bounded}.$$
	
	So, we may assume that
	\begin{equation}\label{eq96}
		u_n\stackrel{w}{\rightarrow}u^*_\lambda\ \mbox{in}\ W^{1,p}_0(\Omega)\ \mbox{and}\ u_n\rightarrow u^*_\lambda\ \mbox{in}\ L^p(\Omega).
	\end{equation}
	
	In (\ref{eq94}) we choose $h=u_n-u^*_\lambda\in W^{1,p}_0(\Omega)$, pass to the limit as $n\rightarrow\infty$ and use (\ref{eq95}), (\ref{eq96}). Then
	\begin{eqnarray*}
		&&\lim\limits_{n\rightarrow\infty}\left\langle A(u_n),u_n-u^*_\lambda\right\rangle=0,\\
		&\Rightarrow&u_n\rightarrow u^*_\lambda\ \mbox{in}\ W^{1,p}_0(\Omega)\ (\mbox{see Lemma \ref{lem4}}).
	\end{eqnarray*}
	
	Taking the limit as $n\rightarrow\infty$ in (\ref{eq94}), we obtain
	\begin{eqnarray*}
		&&\left\langle A(u^*_\lambda),h\right\rangle=\int_\Omega[\lambda\vartheta(u^*_\lambda)+f(z,u^*_\lambda)]hdz\ \mbox{for all}\ h\in W^{1,p}_0(\Omega),\ \bar{u}_\lambda\leq u^*_\lambda,\\
		&\Rightarrow&u^*_\lambda\in S_\lambda\subseteq {\rm int}\, C_+\ \mbox{and}\ u^*_\lambda=\inf S_\lambda.
	\end{eqnarray*}
The proof is now complete.
\end{proof}

Consider the minimal solution map $\chi:\mathcal{L}=\left(0,\lambda^*\right]\rightarrow C^1_0(\overline{\Omega})$ defined by
$$\chi(\lambda)=u^*_\lambda\ \mbox{for all}\ \lambda\in\mathcal{L}.$$

The next proposition establishes the monotonicity and continuity properties of this map.
\begin{prop}\label{prop20}
	If hypotheses $H(a),H(\vartheta),H(f)$ hold, then
	\begin{itemize}
		\item[(a)] $\chi(\cdot)$ is strictly increasing (that is, $0<\lambda<\eta\leq\lambda^*\Rightarrow\chi(\eta)-\chi(\lambda)\in {\rm int}\, C_+$);
		\item[(b)] $\chi(\cdot)$ is left continuous.
	\end{itemize}
\end{prop}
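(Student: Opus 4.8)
The plan is to obtain part (a) directly from Proposition \ref{prop15} together with the minimality of $u^*_\lambda=\chi(\lambda)$, and to prove part (b) by a monotone-convergence argument based on the $(S)_+$-property of $A(\cdot)$ (Lemma \ref{lem4}) and the nonlinear regularity theory. For part (a), fix $0<\lambda<\eta\leq\lambda^*$ and recall $u^*_\eta\in S_\eta\subseteq {\rm int}\, C_+$. Applying Proposition \ref{prop15} with $\eta$ in the role of ``$\lambda$'', $u^*_\eta$ in the role of ``$u_\lambda$'', and $\lambda$ in the role of ``$\mu$'' (admissible since $\lambda<\eta$), I get $w\in S_\lambda$ with $u^*_\eta-w\in {\rm int}\, C_+$. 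Since $u^*_\lambda=\inf S_\lambda$ (Proposition \ref{prop19}), we have $w-u^*_\lambda\in C_+$, hence
$$\chi(\eta)-\chi(\lambda)=u^*_\eta-u^*_\lambda=(u^*_\eta-w)+(w-u^*_\lambda)\in {\rm int}\, C_+\,,$$
because adding an element of $C_+$ to an element of ${\rm int}\, C_+$ keeps one in ${\rm int}\, C_+$. This proves (a).

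For part (b), fix $\lambda\in(0,\lambda^*]$ and a sequence $\lambda_n\uparrow\lambda$; we may assume $\lambda_1\leq\lambda_n<\lambda$ for all $n$. Set $u_n=u^*_{\lambda_n}$. By part (a) the sequence $\{u_n\}_{n\geq1}$ is increasing with $u_n\leq u^*_\lambda$, while Propositions \ref{prop13} and \ref{prop14} give $\bar{u}_{\lambda_1}\leq\bar{u}_{\lambda_n}\leq u_n$; thus $\bar{u}_{\lambda_1}\leq u_n\leq u^*_\lambda$ for all $n$, and in particular $\{u_n\}_{n\geq1}$ is bounded in $L^\infty(\Omega)$. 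Testing the weak form $\langle A(u_n),h\rangle=\int_\Omega[\lambda_n\vartheta(u_n)+f(z,u_n)]h\,dz$ with $h=u_n$ and using Lemma \ref{lem2}(c), the bound $0\leq f(\cdot,u_n)\leq||a||_\infty(1+||u^*_\lambda||_\infty^{r-1})$ from $H(f)(i)$, and the monotonicity of $\vartheta$ (so $0\leq\vartheta(u_n)u_n\leq\vartheta(\bar{u}_{\lambda_1})u^*_\lambda$, whose integral is finite because $\bar{u}_{\lambda_1}$ solves \eqref{eq15}, so $\lambda_1\int_\Omega\vartheta(\bar{u}_{\lambda_1})u^*_\lambda\,dz=\langle A(\bar{u}_{\lambda_1}),u^*_\lambda\rangle<\infty$), I obtain a uniform bound on $||Du_n||_p$. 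Hence $\{u_n\}_{n\geq1}$ is bounded in $W^{1,p}_0(\Omega)$, so along a subsequence $u_n\xrightarrow{w}u^*$ in $W^{1,p}_0(\Omega)$, while by monotonicity $u_n(z)\uparrow u^*(z)$ for a.a.\ $z\in\Omega$ (for the whole sequence), with $\bar{u}_{\lambda_1}\leq u^*\leq u^*_\lambda$.

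I then identify $u^*$. Testing the weak form with $h=u_n-u^*$: the perturbation term vanishes in the limit by dominated convergence ($|f(z,u_n)(u_n-u^*)|\leq C(u^*_\lambda-\bar{u}_{\lambda_1})\in L^1(\Omega)$, $\to0$ a.e.), and so does the singular term ($|\vartheta(u_n)(u_n-u^*)|\leq\vartheta(\bar{u}_{\lambda_1})(u^*_\lambda-\bar{u}_{\lambda_1})\in L^1(\Omega)$, $\to0$ a.e.); thus $\lim_n\langle A(u_n),u_n-u^*\rangle=0$, and Lemma \ref{lem4} yields $u_n\to u^*$ in $W^{1,p}_0(\Omega)$. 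Passing to the limit in the weak form for a fixed $h\in W^{1,p}_0(\Omega)$ — using the continuity of $A(\cdot)$, dominated convergence for the $f$-term, and dominated convergence for the singular term ($|\vartheta(u_n)h|\leq\vartheta(\bar{u}_{\lambda_1})|h|\in L^1(\Omega)$, together with $\lambda_n\to\lambda$) — I conclude $\langle A(u^*),h\rangle=\int_\Omega[\lambda\vartheta(u^*)+f(z,u^*)]h\,dz$ for all $h$; since $u^*\geq\bar{u}_{\lambda_1}>0$, the nonlinear regularity theory gives $u^*\in {\rm int}\, C_+$, so $u^*\in S_\lambda$. Minimality of $u^*_\lambda$ forces $u^*_\lambda\leq u^*$, while $u_n\leq u^*_\lambda$ forces $u^*\leq u^*_\lambda$; hence $u^*=u^*_\lambda$.

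Finally, I upgrade the convergence to $C^1_0(\overline\Omega)$. Since $\bar{u}_{\lambda_1}\leq u_n\leq u^*_\lambda$, the reaction $\lambda_n\vartheta(u_n)+f(z,u_n)$ is dominated, uniformly in $n$ and including near $\partial\Omega$, by the fixed function $\lambda^*\vartheta(\bar{u}_{\lambda_1})+||a||_\infty(1+||u^*_\lambda||_\infty^{r-1})$; hence the nonlinear regularity theory of Lieberman \cite{18}, applied in the singular setting exactly as for problem \eqref{eq15}, yields $\alpha\in(0,1)$ and $M>0$ with $u_n\in C^{1,\alpha}_0(\overline\Omega)$ and $||u_n||_{C^{1,\alpha}_0(\overline\Omega)}\leq M$ for all $n$. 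The compact embedding $C^{1,\alpha}_0(\overline\Omega)\hookrightarrow C^1_0(\overline\Omega)$ then forces $u_n\to u^*_\lambda$ in $C^1_0(\overline\Omega)$ along every subsequence, hence along the whole sequence, so $\chi(\cdot)$ is left continuous. I expect the genuine obstacle to be precisely this last point — extracting a \emph{uniform} $C^{1,\alpha}$-estimate in the presence of the boundary singularity — but once the two-sided bound $\bar{u}_{\lambda_1}\leq u_n\leq u^*_\lambda$ is available it is of the same nature as the regularity already established for \eqref{eq15}.
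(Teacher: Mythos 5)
Your proof is correct and follows essentially the same route as the paper: part~(a) is obtained from Proposition~\ref{prop15} together with the minimality of $u^*_\lambda$, exactly as in the text; part~(b) uses the monotonicity from~(a), the two-sided bounds $\bar{u}_{\lambda_1}\le u_n\le u^*_\lambda$, Lieberman's uniform $C^{1,\alpha}$ estimate, and the compact embedding $C^{1,\alpha}_0(\overline\Omega)\hookrightarrow C^1_0(\overline\Omega)$. The one place where you are more careful than the paper deserves comment: to conclude $\tilde u^*=u^*_\lambda$ one needs \emph{both} $\tilde u^*\le u^*_\lambda$ (immediate from monotonicity) \emph{and} $\tilde u^*\ge u^*_\lambda$, and the latter requires first establishing that $\tilde u^*\in S_\lambda$ and then invoking $u^*_\lambda=\inf S_\lambda$. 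The paper's argument-by-contradiction ("find $z_0$ with $u^*_\lambda(z_0)<\tilde u^*(z_0)$'') silently presupposes $\tilde u^*\ge u^*_\lambda$, i.e.\ it uses $\tilde u^*\in S_\lambda$ without stating the passage to the limit in the weak formulation. Your dominated-convergence argument (with the integrable majorants $\vartheta(\bar u_{\lambda_1})u^*_\lambda$ and $\|a\|_\infty(1+\|u^*_\lambda\|_\infty^{r-1})$, justified via $\lambda_1\int_\Omega\vartheta(\bar u_{\lambda_1})u^*_\lambda\,dz=\langle A(\bar u_{\lambda_1}),u^*_\lambda\rangle$) together with the $(S)_+$-property of $A(\cdot)$ fills precisely this gap, so your write-up is in fact the more complete of the two. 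Your final caveat about the uniform $C^{1,\alpha}$ bound near $\partial\Omega$ is appropriate, but the situation is no worse than in the paper's Propositions~\ref{prop10} and~\ref{prop11}, where the same regularity theory is invoked for solutions trapped between two ${\rm int}\,C_+$ barriers, so it is consistent with the paper's conventions.
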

\begin{proof}
	$(a)$ Let $0<\mu<\lambda\leq \lambda^*$. According to Proposition \ref{prop15}, we can find $u_\mu\in S_\mu\subseteq {\rm int}\, C_+$ such that
	\begin{eqnarray*}
		&&u^*_\lambda-u_\mu\in {\rm int}\, C_+,\\
		&\Rightarrow&u^*_\lambda-u^*_\mu\in {\rm int}\, C_+\ (\mbox{since}\ u^*_\mu\leq u_\mu),\\
		&\Rightarrow&\chi(\cdot)\ \mbox{is strictly increasing}.
	\end{eqnarray*}
	
	$(b)$ Let $\{\lambda_n\}_{n\geq 1}\subseteq\mathcal{L}$ and assume that $\lambda_n\rightarrow\lambda^-\ (\lambda\in\mathcal{L})$. We set $u^*_n=u^*_{\lambda_n}$ for all $n\in\NN$. The sequence $\{u^*_n\}_{n\geq 1} \subseteq {\rm int}\, C_+$ is increasing (see $(a)$) and $u^*_n\leq u^*_{\lambda^*}\in S_{\lambda^*}\subseteq {\rm int}\, C_+$. We have
	$$\left\langle A(u_n),h\right\rangle=\int_\Omega[\lambda_n\vartheta(u^*_n)+f(z,u^*_n)]hdz\ \mbox{for all}\ h\in W^{1,p}_0(\Omega).$$
	
	The nonlinear regularity theory of Lieberman \cite{18} implies that we can find $\alpha\in(0,1)$ and $c_{33}>0$ such that
	$$u^*_n\in C^{1,\alpha}_0(\overline{\Omega}),\ ||u^*_n||_{C^{1,\alpha}_0(\overline{\Omega})}\leq c_{33}\ \mbox{for all}\ n\in\NN.$$
	
	The compact embedding of $C^{1,\alpha}_0(\overline{\Omega})$ into $C^1_0(\overline{\Omega})$ and the monotonicity of $\{u^*_n\}_{n\geq 1}$ imply that
	\begin{equation}\label{eq97}
		u^*_n\rightarrow\tilde{u}^*\ \mbox{in}\ C^1_0(\overline{\Omega}).
	\end{equation}
	
	We claim that $\tilde{u}^*=u^*_\lambda$. Arguing by contradiction, suppose that $\tilde{u}^*\neq u^*_\lambda$. Then we can find $z_0\in\overline{\Omega}$ such that
	\begin{eqnarray*}
		&&u^*_\lambda(z_0)<\tilde{u}^*(z_0),\\
		&\Rightarrow&u^*_\lambda(z_0)<u^*_n(z_0)\ \mbox{for all}\ n\geq n_0\ (\mbox{see (\ref{eq97})}),
	\end{eqnarray*}
	which contradicts $(a)$. Therefore $\tilde{u}^*=u^*_\lambda$ and so we have
	\begin{eqnarray*}
		&&u^*_n\rightarrow u^*_\lambda\ \mbox{in}\ C^1_0(\overline{\Omega}),\\
		&\Rightarrow&\chi(\cdot)\ \mbox{is left continuous},
	\end{eqnarray*}
which concludes the proof of
Proposition \ref{prop20}.
\end{proof}

The proof of Theorem \ref{th21} is now complete.

\medskip
{\bf Acknowledgements.} This research was supported by the Slovenian Research Agency grants
P1-0292, J1-8131, J1-7025, N1-0064, and N1-0083.

\end{document}